\documentclass{amsart}
\usepackage{amsmath, amssymb, amsthm, mathrsfs, multicol}
\usepackage{palatino}
\usepackage{tikz}
\usepackage{tikz-cd}
\usetikzlibrary{arrows, calc}

\usepackage[margin=1.5in]{geometry}

\newtheorem{theorem}{Theorem}[section]
\newtheorem{lemma}[theorem]{Lemma}
\newtheorem{definition}[theorem]{Definition}
\newtheorem{proposition}[theorem]{Proposition}
\newtheorem{corollary}[theorem]{Corollary}

\def\epsilon{\varepsilon}
\def\phi{\varphi}
\def\emptyset{\varnothing}

\def\R{{\mathbb R}}

\def\F{{\mathbb F}}
\def\N{{\mathbb N}}

\def\<{\left\langle}
\def\>{\right\rangle}

\def\depth{\operatorname{depth}}

\def\ol{\overline}
\def\wt{\widetilde}

\def\on{\operatorname}

\def\AA{\mathcal{A}}

\def\st{\operatorname{str}}

\def\downset{\downarrow\!}

\numberwithin{equation}{section}

\title{Stability of stretched root systems, root posets, and shards}
\author{Will Dana}

\begin{document}
\begin{abstract}

Inspired by the infinite families of finite and affine root systems, we consider a ``stretching'' operation on general crystallographic root systems which, on the level of Coxeter diagrams, replaces a vertex with a path of unlabeled edges. We embed a root system into its stretched versions using a similar operation on individual roots. For a fixed root, we study the growth of two associated structures as we lengthen the stretched path: the downset in the root poset (in the sense of Bj\"orner and Brenti \cite{bjornerbrenti}) and the arrangement of shards, introduced by Nathan Reading. We show that both eventually admit a uniform description, and deduce enumerative consequences: the size of the downset is eventually a polynomial, and the number of shards grows exponentially.

\end{abstract}
\maketitle

\section{Introduction}
\label{intro}

In many questions about root systems, Coxeter groups, and related objects, the type $A$ family is foundational, and often more easily resolved than the general case. In particular, type $A$ is in some sense foundational to the classification of finite and affine root systems: just as the type $A$ Coxeter diagrams are simply paths, almost\footnote{With the exception of the dihedral systems $I_2(m)$.} every infinite family is described by Coxeter diagrams obtained by inserting paths into a fixed diagram.

In this paper, we study a generalization of this kind of family to arbitrary Coxeter diagrams, using a \textbf{stretching} operation: 
\begin{definition}
\label{stretchdef}
Let $G$ be a Coxeter diagram, $x$ a vertex of $G$, and $L_x\sqcup R_x$ a partition of the neighbors of $x$ into two subsets. We call $x$ an \textbf{elastic vertex}, $L_x$ and $R_x$ its \textbf{left} and \textbf{right neighbors} respectively, and the tuple $(x, L_x, R_x)$ \textbf{elastic data}.

Then the \textbf{$n$-stretched diagram} $\st_n(G)$ is obtained by:
\begin{itemize}
\item replacing the vertex $x$ with vertices $x_0,\ldots, x_n$;
\item replacing the edges between $x$ and $L_x$ with correspondingly labeled edges between $x_0$ and $L_x$; 
\item replacing the edges between $x$ and $R_x$ with correspondingly labeled edges between $x_n$ and $R_x$;
\item and inserting unlabeled edges between $x_i$ and $x_{i+1}$ for $0\leq i < n$.
\end{itemize}

We call the subdiagram induced by $x_0,\ldots, x_n$ the \textbf{stretched path}.
\end{definition}

If $\Phi$ is a crystallographic root system associated to $G$, for given elastic data we similarly define the \textbf{$n$-stretched root system} $\st_n(\Phi)$ associated to $\st_n(G)$ (Definition \ref{stretchingrootsystems}).

In looking at a family of stretches of a diagram, two natural questions arise:
\begin{itemize}
\item What attributes of the family stabilize as $n$ becomes large?
 
\item Can we isolate aspects of the family's behavior that resemble the good behavior of the $A_n$ family?
\end{itemize}

However, apart from work of Hepworth \cite{hepworth} showing homological stability for Coxeter groups in stretched families with $R_x = \emptyset$, this perspective appears unexplored. We examine these questions in the context of combinatorial properties of roots. 

\subsection{The root poset}

One way of relating the different stretches of a root system is to embed the shorter stretches into the longer ones. To facilitate this, we write each root as a linear combination of simple roots. Since simple roots correspond to vertices of the diagram, we consider roots to be integer-valued functions on the vertices. 

\begin{definition}
\label{rootstretchdef}
Let $\alpha$ be a integer-valued function on the vertices of a diagram $G$ with elastic data $(x, L_x, R_x)$. Then $\st_n(\alpha)$ is the function on $\st_n(G)$ with value $\alpha(x)$ at all the $x_i$, and with the same values as $\alpha$ elsewhere.
\end{definition}

\begin{proposition}[Proposition \ref{stretchingroots}]
Let $\alpha$ be a root of $\Phi$. Then $\st_n(\alpha)$ is a root of $\st_n(\Phi)$.
\end{proposition}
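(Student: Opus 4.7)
The plan is to show that the stretching operation intertwines the Weyl group actions on $\Phi$ and $\st_n(\Phi)$ via a suitable substitution of generators. Setting $\beta := \alpha_{x_0} + \alpha_{x_1} + \cdots + \alpha_{x_n}$, so that
$$\st_n(\alpha) = \sum_{v \neq x} \alpha(v)\,\alpha_v + \alpha(x)\,\beta,$$
I first observe that $\beta$ is a root of $\st_n(\Phi)$: the stretched path forms a simply-laced type-$A_{n+1}$ root subsystem of $\st_n(\Phi)$, in which $\beta$ is the highest root.

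The crux is the following compatibility, to be verified by a Cartan-pairing computation: for any $\gamma$ in the root lattice of $\Phi$,
$$\st_n(s_v(\gamma)) = s_v(\st_n(\gamma)) \text{ for } v \neq x, \qquad \st_n(s_x(\gamma)) = s_\beta(\st_n(\gamma)).$$
These reduce to the two identities $\langle \st_n(\gamma), \alpha_v^\vee \rangle = \langle \gamma, \alpha_v^\vee \rangle$ for $v \neq x$, and $\langle \st_n(\gamma), \beta^\vee \rangle = \langle \gamma, \alpha_x^\vee \rangle$. The first holds because each $v \neq x$ touches the stretched path only at $x_0$ or $x_n$, with the original Cartan integer $\langle \alpha_v, \alpha_x^\vee\rangle$ preserved by the construction of $\st_n(G)$. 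For the second, I would use $\beta^\vee = \alpha_{x_0}^\vee + \cdots + \alpha_{x_n}^\vee$ (from the simply-laced $A_{n+1}$ subsystem): interior path vertices pair trivially with $\beta^\vee$ since their two neighbors in the path contribute $-1$ each, cancelling the diagonal $2$; the endpoints contribute $1$ apiece; combining with the external contributions recovers exactly $\langle \gamma, \alpha_x^\vee \rangle$. This bookkeeping, with cases split by whether $v$ lies in the stretched path, in $L_x \cup R_x$, or elsewhere, is the main obstacle --- not deep, but needing care.

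Given the compatibility, the conclusion is immediate. Writing $\alpha$ as a Weyl conjugate $s_{v_1}\cdots s_{v_k}(\alpha_u)$ of a simple root, iteration gives
$$\st_n(\alpha) = \tilde{s}_{v_1}\cdots \tilde{s}_{v_k}\bigl(\st_n(\alpha_u)\bigr),$$
where $\tilde{s}_v := s_v$ for $v \neq x$ and $\tilde{s}_x := s_\beta$. Each $\tilde{s}_v$ lies in the Weyl group of $\st_n(\Phi)$ (using that $\beta$ is a root), and $\st_n(\alpha_u)$ is a root --- either $\alpha_u$ if $u \neq x$, or $\beta$ if $u = x$. Therefore $\st_n(\alpha)$ is a root of $\st_n(\Phi)$.
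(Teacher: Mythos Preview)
Your approach is correct and takes a slightly different route from the paper's. You package the key step as the single identity $\st_n(s_x(\gamma)) = s_\beta(\st_n(\gamma))$, where $\beta = \alpha_{x_0}+\cdots+\alpha_{x_n}$ is the highest root of the type-$A_{n+1}$ sub-path, and verify it by a direct Cartan-pairing computation. The paper instead writes out the same Weyl-group element as the explicit product $s_{x_0}s_{x_1}\cdots s_{x_n}s_{x_{n-1}}\cdots s_{x_1}s_{x_0}$ of simple reflections and runs an induction on depth, never naming $\beta$ or computing pairings. Since that product is precisely a reduced word for $s_\beta$, the two arguments are close cousins; yours is the more structural one and arguably cleaner for this proposition alone. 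The paper's unfolded form is not incidental, though: the very next step (Lemma~\ref{stretchedcovers}) tracks the intermediate roots along this word of simple reflections to compute how depth changes under stretching, so the explicit sequence is what the paper actually needs downstream.
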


We thus focus on how data associated to a root grows and stabilizes as we stretch the root. We first consider the \textbf{root poset}, which is an order on the positive roots of $\Phi$ analogous to the weak order on a Coxeter group $W$. In particular, just as reduced words of $W$ correspond to saturated chains based at the identity in the weak order, saturated chains based at simple roots in the root poset correspond to \textbf{reduced expressions}: minimal-length expressions for roots in terms of simple reflections applied to a simple root.

\begin{definition}[\cite{bjornerbrenti}, Definition 4.6.3 and Lemma 4.6.4]
Let $\Phi$ be a root system and let $W$ be the associated Coxeter group. For positive roots $\beta, \gamma\in \Phi$, we say that $\beta \leq \gamma$ in the \textbf{root poset}\footnote{Note that there is a more commonly used definition of the root poset \cite[Definition 5.1.1]{armstrong}, which is a refinement of this poset, and which we do not consider here.} if there exist simple reflections $s_{i_1},\ldots, s_{i_k}\in W$ with associated simple roots $\alpha_{i_1},\ldots, \alpha_{i_k}$ such that
\[
\gamma = s_{i_k}\cdots s_{i_1}(\beta)
\]
and for all $1\leq j\leq k$,
\[
s_{i_j}\cdots s_{i_1}(\beta) - s_{i_{j-1}}\cdots s_{i_1}(\beta)
\]
is a positive multiple of $\alpha_{i_j}$.
\end{definition}

The root poset is graded by \textbf{depth}, the length of any reduced expression for a root, analogously to how the weak order is graded by length. Our first result shows that depth grows in a predictable way:

\begin{theorem}[Corollary \ref{lineardepth}]
\label{introlineardepth}
For a positive root $\alpha$, there exists an integer $t$ such that 
\[
\depth(\st_n(\alpha)) = tn + \depth(\alpha).
\]
\end{theorem}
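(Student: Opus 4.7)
The plan is to establish matching upper and lower bounds of the form $tn+\depth(\alpha)$ on $\depth(\st_n(\alpha))$ by lifting and projecting reduced expressions between $\Phi$ and $\st_n(\Phi)$. Fix a reduced expression $\alpha = s_{k_d}\cdots s_{k_1}(\alpha_{k_0})$ of length $d=\depth(\alpha)$.

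For the upper bound, the central observation is that for any vertex $k\neq x$ of $G$, the Coxeter neighborhood of $k$ in $\st_n(G)$ is canonically the same as in $G$: neighbors of $x$ in $G$ become neighbors of $x_0$ or $x_n$, and by construction $\st_n(\beta)(x_0)=\st_n(\beta)(x_n)=\beta(x)$. Consequently, $s_k$ in $\st_n(\Phi)$ sends $\st_n(\beta)$ to $\st_n(s_k\beta)$, so every non-$x$ step of the original reduced expression lifts to a single step between the corresponding stretched roots. For reflections at $x$ I handle two sub-cases: if $k_0=x$, the stretched starting root $\st_n(\alpha_x)=\alpha_{x_0}+\cdots+\alpha_{x_n}$ is reached from $\alpha_{x_0}$ in exactly $n$ steps via $s_{x_1},\ldots,s_{x_n}$; and for each internal step $\beta\mapsto s_x\beta$ in the original chain, I exhibit a block of reflections in $\{s_{x_0},\ldots,s_{x_n}\}$ of length linear in $n$ that transports $\st_n(\beta)$ to $\st_n(s_x\beta)$. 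Summing contributions yields a chain in $\st_n(\Phi)$ of length $tn+d$ for a suitable integer $t$ depending only on $\alpha$.

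For the lower bound, I reverse direction: given any reduced expression for $\st_n(\alpha)$, project it to $\Phi$ by identifying every $s_{x_i}$ with $s_x$ and collapsing the resulting trivial steps. The output is a valid chain in the root poset of $\Phi$ from a simple root to $\alpha$, so has length at least $d$. A careful accounting — tracking how many reflections within the stretched path are consumed while executing each projected $s_x$-move — then forces $\depth(\st_n(\alpha))\geq tn+d$ for the same integer $t$.

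The main obstacle is the simulation of $s_x$ in the stretched system. Each intermediate root in the simulating block must itself be a positive root of $\st_n(\Phi)$, so that the lifted chain actually lies in the root poset, and the block must be of minimum length, so that the lower bound matches the upper bound. Because the stretched path is simply laced while its connections to $L_x$ and $R_x$ inherit labels from $G$, the non-simply-laced case will require the most care; the resulting value of $t$ is expected to depend on $\alpha(x)$ together with the Coxeter labels at edges incident to $x$.
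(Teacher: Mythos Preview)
Your plan has a genuine gap at the heart of the ``lift each $s_x$-step by a block on the stretched path'' idea. Fix a cover $s_x(\beta)\lessdot\beta$ and write $b=\beta(x)$, $S_L=\sum_{y\in L_x}(-A_{xy})\beta(y)$, $S_R=\sum_{y\in R_x}(-A_{xy})\beta(y)$; the cover means $S_L+S_R<2b$, so say $S_L<b$. If in addition $S_R>b$, then $\st_n(\beta)$ and $\st_n(s_x(\beta))$ are \emph{incomparable} in the root poset of $\st_n(\Phi)$: the only sequence of reflections on the path taking one to the other is the length-$(2n{+}1)$ word $s_{x_0}s_{x_1}\cdots s_{x_n}s_{x_{n-1}}\cdots s_{x_0}$, and it first goes down $n{+}1$ steps (spreading $S_L$ across) and then back up $n$ steps (spreading $S_L+S_R-b>S_L$ back). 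So there is no monotone block you can insert, and your lifted expression is not a chain in the root poset; the upper bound as stated does not go through. Note also that in this case the correct depth increment is exactly $1$, not linear in $n$, so even a non-monotone block of length $2n{+}1$ would give the wrong constant $t$.

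The paper handles exactly this by abandoning the attempt to lift a single reduced expression. Instead it proves, for every cover $s_x(\beta)\lessdot\beta$, an exact formula $\depth(\st_n(\beta))-\depth(\st_n(s_x(\beta)))\in\{1,\,n{+}1,\,2n{+}1\}$ according to whether $S_R>b$, $S_R=b$, or $S_R<b$, obtained precisely by walking the non-monotone $(2n{+}1)$-word and counting up-steps minus down-steps (using that depth grades the poset). A one-line induction on $\depth(\alpha)$ then gives $\depth(\st_n(\alpha))=tn+\depth(\alpha)$ directly, with no separate lower bound needed. Your projection argument for the lower bound is also problematic as written: intermediate roots in a reduced expression for $\st_n(\alpha)$ are almost never of the form $\st_n(\gamma)$, so ``identify every $s_{x_i}$ with $s_x$'' does not produce a sequence of roots of $\Phi$, let alone a chain there. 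The trichotomy $S_R\lessgtr b$ is the missing ingredient; once you have it, both bounds collapse into a single equality.
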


\subsection{Downsets}

In section \ref{downsets}, we consider the \textbf{downset} generated by a positive root $\alpha$, the set of roots below $\alpha$ in the root poset, which we denote by $\downset \alpha$. Through the above interpretation of saturated chains, this encapsulates all reduced expressions for $\alpha$. For a fixed $\alpha$, we construct a finite structure which, for sufficiently large $n$, determines $\downset\st_n(\alpha)$. This gives a sense in which the root poset stabilizes. 

Specifically, for a root system $\Phi$ with diagram $G$, we define special subsets, called \textbf{stretching classes} (Definition \ref{stretchingclass}), of the set $\bigsqcup_n \st_n(\Phi)^+$ of positive roots for all stretches of $\Phi$. A stretching class consists of functions on the vertices of $\st_n(G)$ with:
\begin{itemize}
\item specified values off the stretched path and at some vertices on the ends of the stretched path, and
\item specified values occurring on the remaining vertices of the stretched path in a specified order, but each repeating any nonzero number of times.
\end{itemize}

Our main result is:

\begin{theorem}[Theorem \ref{stableposet}]
Let $\alpha$ be a positive root of $\Phi$. Then there exists a finite set of stretching classes for $G$ such that, for sufficiently large $n$, $\downset\st_n(\alpha)$ consists of the roots in $\st_n(\Phi)$ which lie in those stretching classes.
\end{theorem}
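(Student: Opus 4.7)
The plan is to encode each root $\gamma \in \st_n(\Phi)^+$ by a \emph{signature}: its values off the stretched path, together with the sequence of values of its maximal constant runs along $(x_0,x_1,\ldots,x_n)$, recording the values and their order but not the lengths of the runs. A stretching class in the sense of the theorem is precisely the set of roots with a given signature, where the interior run lengths are free positive integers. The goal then reduces to showing that the set of signatures realized by roots in $\bigcup_n \downset\st_n(\alpha)$ is finite, and that for $n$ large every allowed choice of run lengths is realized in the downset.

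First I would analyze how simple reflections act on the stretched path. For an interior path vertex $x_i$ with $0 < i < n$, the unlabeled edges to $x_{i\pm 1}$ give
\[
s_{x_i}(\gamma)(x_i) = \gamma(x_{i-1}) + \gamma(x_{i+1}) - \gamma(x_i),
\]
with all other coordinates preserved. In particular, reflecting strictly inside a plateau, where $\gamma(x_{i-1}) = \gamma(x_i) = \gamma(x_{i+1})$, acts trivially on $\gamma$. Hence the only path reflections that alter $\gamma$ are those at plateau boundaries or at the endpoints $x_0$, $x_n$; the latter also interact with the off-path data at $L_x$ and $R_x$. Off-path reflections act just as in $\Phi$ except possibly at $x_0$ or $x_n$.

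Starting from $\st_n(\alpha)$, whose signature is a single block of value $\alpha(x)$ with off-path values specified by $\alpha$, I would trace how a descending chain can modify the signature. Each signature-altering reflection is a local move determined only by the relevant triple of adjacent values and the Cartan data, hence comes from a finite menu independent of $n$. Combining this with the termwise bound $0 \leq \gamma \leq \st_n(\alpha)$, which limits the possible values appearing, I would build an inductive argument showing the signature has complexity bounded by a quantity depending only on $\alpha$. For the converse direction, that every root realizing one of the listed signatures actually lies in $\downset \st_n(\alpha)$ for large $n$, I would take a witnessing descending chain in a fixed minimal $\st_{n_0}(\Phi)$ and "pad" it by extending each plateau: inserting further trivial plateau-interior reflections and suitably relabeling the positions of the original signature-changing reflections.

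The main obstacle is the uniform finiteness of signatures. Theorem~\ref{introlineardepth} allows descending chains of length $O(n)$, so a priori they could create $O(n)$ distinct blocks along the path. The resolution should be a careful bookkeeping argument distinguishing genuinely signature-altering reflections (bounded by the off-path data coming from $\alpha$ and the finitely many local reflection moves) from reflections that merely shift plateau boundaries or act trivially. The subtle point is the dynamics at the endpoints $x_0$ and $x_n$, where reflections involving $L_x$ and $R_x$ can feed new values onto the path and potentially create new blocks; showing that this production terminates after finitely many distinct patterns, uniformly in $n$, is the technical heart of the argument.
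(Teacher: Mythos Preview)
Your encoding by ``signature'' is too coarse to play the role of a stretching class, and the paper gives an explicit example showing this. Consider the root $\alpha$ with values $1,1,1$ on the left, $5$ at $x$, and $3,3$ on the right (the one from Figure~\ref{stretchednoninterval}). The paper points out that the downsets $\downset\st_n(\alpha)$ contain the roots with path values $3,3,\ldots,3,4$ (a single $4$ at the right end) but \emph{not} the roots with path values $3,3,\ldots,3,4,4$. Both families have the same signature in your sense: off-path values $(1,1,1\mid 3,3)$ and run sequence $(3,4)$ along the path. So membership in the downset cannot be read off from the signature; the length of the terminal $4$-run matters. This is exactly why the paper's Definition~\ref{stretchingclass} allows some vertices near the ends of the stretched path to carry \emph{fixed} (non-asterisked) values rather than freely repeatable ones, and why operation~(4) in the construction of $P$ sometimes removes an asterisk. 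Your claim that ``a stretching class in the sense of the theorem is precisely the set of roots with a given signature'' is therefore false, and the converse direction of your argument (padding plateaus freely) would produce roots not in the downset.

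Separately, you correctly identify the finiteness of the set of patterns as the technical heart, but you do not supply an argument. The termwise bound $0\le\gamma\le\st_n(\alpha)$ only bounds the \emph{values} that can appear, not the number of runs: nothing you have said rules out oscillating patterns like $1,2,1,2,\ldots$ along the path. The paper resolves this not by analyzing arbitrary descending chains from $\st_n(\alpha)$, but by building a finite directed graph $P$ on stretching classes (with the richer structure allowing fixed endpoints) via four explicit operations, and proving $P$ is finite by exhibiting a lexicographic invariant---the weighted left/right sums, the endpoint coefficients, and the total coefficient sum---that strictly decreases along every arrow. Your proposal has no analogue of this invariant, and without it the ``careful bookkeeping'' you allude to has no mechanism to terminate.
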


As a corollary, we deduce:

\begin{theorem}[Theorem \ref{polynomialgrowth}]
There is a polynomial $p(n)$ such that $|\downset\st_n(\alpha)| = p(n)$ for sufficiently large $n$. 
\end{theorem}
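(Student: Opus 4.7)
The plan is to reduce this polynomial-counting statement to the structural result about stretching classes, and then to observe that counting roots in a single stretching class amounts to counting compositions.

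First I would invoke the preceding theorem: for $n$ sufficiently large, $\downset\st_n(\alpha)$ is exactly the union of $\st_n(\Phi)^+ \cap C$ over a finite collection of stretching classes $C$. Since the collection is finite and does not depend on $n$, it suffices to show that each $|\st_n(\Phi)^+ \cap C|$ is eventually a polynomial in $n$, and that these contributions combine to a polynomial (so I must also check that the classes are disjoint, or else apply inclusion-exclusion; this should follow from the definition of stretching class since a root's values on the stretched path determine which class it belongs to).

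Next, I would analyze a single stretching class $C$. By Definition \ref{stretchingclass}, such a class fixes the values of its roots at every vertex outside the stretched path and at a specified set of boundary vertices $x_0,\ldots,x_a$ and $x_{n-b},\ldots,x_n$ at the two ends (with $a$ and $b$ constants depending on $C$ but not on $n$), and specifies an ordered sequence of values $v_1,\ldots,v_k$ that appear consecutively on the remaining $n - a - b - 1$ interior vertices, each value occurring at least once. Counting the roots in $C \cap \st_n(\Phi)^+$ therefore reduces to counting compositions of $n - a - b - 1$ into exactly $k$ positive parts, which equals $\binom{n-a-b-2}{k-1}$. This is a polynomial in $n$ of degree $k-1$ once $n$ is large enough that $n - a - b - 1 \geq k$.

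Summing these polynomials over the finitely many stretching classes in our collection yields a polynomial $p(n)$ with $|\downset\st_n(\alpha)| = p(n)$ for $n$ large enough (specifically, large enough to be in the stable regime of the previous theorem and to exceed the threshold $k_C + a_C + b_C + 1$ for every class $C$ in the collection). The main obstacle is bookkeeping: confirming from the precise definition of stretching class that (i) the classes involved are pairwise disjoint on $\st_n(\Phi)^+$, and (ii) the parameterization by compositions is a genuine bijection, with no roots in a class lost because they fail to be roots of $\st_n(\Phi)$. The latter should follow from the construction of stretching classes, which is built precisely so that every function of the specified form is in fact a root; otherwise one could still conclude the polynomial claim by noting that a class and its root-locus agree on a cofinite set of $n$.
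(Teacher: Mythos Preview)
Your outline matches the paper's approach almost exactly: invoke Theorem~\ref{stableposet}, count a single stretching class by compositions (the paper gets $\binom{n-m+\ell-1}{\ell-1}$, equivalent to your $\binom{n-a-b-2}{k-1}$), and then combine. Your point (ii) is handled in the paper by Propositions~\ref{stretchingroots} and~\ref{squishing}: if one member of a stretching class is a root, they all are.

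The one genuine gap is point (i). The stretching classes produced by the graph $P$ are \emph{not} pairwise disjoint; the paper says so explicitly (``we must deal with redundancy between our stretching classes, since they may nontrivially overlap''). Your intuition that ``a root's values on the stretched path determine which class it belongs to'' fails because different classes can have different numbers of asterisked versus fixed vertices and still share roots: for instance, a pattern with a fixed coefficient $c$ at $x_0$ followed by $c'^*$ overlaps with the pattern $c^*\,c'^*$. You do mention inclusion--exclusion as a fallback, but that is not free: to conclude polynomiality you need every finite intersection of the classes to also have polynomial size in $n$. The paper supplies exactly the missing lemma---the intersection of two stretching classes is empty, a single root, or again a stretching class---so that inclusion--exclusion runs over a finite family of stretching classes and the binomial count applies to every term. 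Without that lemma (or an equivalent), your argument does not close.
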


This generalizes the fact that the number of roots in the $A_n$ system is a quadratic polynomial in $n$.

\subsection{Shards}
\label{shardsintro}

Given a root system $\Phi$, the associated Coxeter group $W$ acts by reflections on a vector space $V^*$. 
For each root $\alpha$, there is a reflection $s_\alpha\in W$ which fixes a hyperplane $\alpha^\perp\subset V^*$. The arrangement of reflecting hyperplanes slices $V^*$ into regions. There is a natural choice of base region, which is bounded by the reflecting hyperplanes of the simple roots. This region is a fundamental domain for the action of $W$, and so by labeling it with the identity, we get a bijection between $W$ and the set of regions. This hyperplane arrangement is closely connected with the right weak order on $W$: two regions are adjacent along a hyperplane if and only if one of the associated elements of $W$ covers the other in weak order.

Studying lattice quotients of the weak order, Reading introduced a way of breaking the reflecting hyperplanes into convex subsets called \textbf{shards} \cite{reading1}\cite{reading2}. Shards govern lattice quotients in the following sense: given any quotient of the weak order, the identifications it makes between elements can be described in terms of merging regions by removing shards. They have since also been found to have representation-theoretic significance: given a simply laced Dynkin diagram, the shards of its Coxeter arrangement correspond to the bricks of the associated preprojective algebra \cite{thomas}.

Given a hyperplane arrangement $\AA$ with a distinguished base region $D$, along with two hyperplanes $H_1, H_2\in \AA$, we define the \textbf{rank 2 subarrangement} generated by $H_1$ and $H_2$ to consist of all hyperplanes of $\AA$ which contain $H_1\cap H_2$. One region of this subarrangement will contain $D$, and we call the two hyperplanes bounding this region the \textbf{basic hyperplanes}.

We then say that $H_1$ \textbf{cuts} $H_2$ if, in the rank 2 subarrangement they generate, $H_1$ is basic but $H_2$ is not. In any such pair, we call $H_1\cap H_2$ a \textbf{fracture} of $H_2$. Then the fractures of a hyperplane will slice it into pieces which we call \textbf{shards}.

Here, we fix a root $\alpha$ and consider the fractures of $\alpha^\perp$ as an arrangement within $\alpha^\perp$. This is a situation in which the type $A$ case is simple: letting $\alpha$ be the highest root of $A_n$, with value $1$ at every vertex, the arrangement of fractures is linearly isomorphic to the arrangement of coordinate hyperplanes in $\R^{n-1}$. In particular, the number of shards in $\alpha^\perp$ is $2^{n-1}$.

Inspired by this, we give a uniform description of the arrangement of fractures for any root system as we stretch a root $\alpha$. In the three results that follow, the constant $t$ is the depth growth rate from Theorem \ref{introlineardepth}.

\begin{theorem}[Theorem \ref{fractureform}]
Let $\alpha$ be a root of $\Phi$. Then there exists a vector space $V'$, linear forms $f_1,\ldots, f_s$ and $g_1,\ldots, g_t$ on $V'$, and an integer $e$ such that, for sufficiently large $n$, the arrangement of fractures in $\st_n(\alpha)^\perp$ is linearly isomorphic to the arrangement in $V'\times \R^{n - e}$ defined by the hyperplanes
\begin{align*}
f_i = 0 &\qquad 1\leq i\leq s \\
g_j - z_k = 0 &\qquad 1\leq j\leq t, 1\leq k\leq n-e
\end{align*}
\end{theorem}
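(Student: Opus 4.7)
The plan is to combine the stretching-class description of the downset from Theorem \ref{stableposet} with a characterization of the cutting relation in terms of rank 2 root subsystems. First I would recall that $\beta^\perp$ cuts $\st_n(\alpha)^\perp$ precisely when, in the rank 2 subsystem generated by $\beta$ and $\st_n(\alpha)$, the root $\beta$ is one of the two extremal positive roots while $\st_n(\alpha)$ is not. Since the extremal (basic) roots of a rank 2 subsystem are minimal in the induced root poset, any cutting root lies in $\downset \st_n(\alpha)$, so the set of cutting roots is a distinguished subset of the roots already described by Theorem \ref{stableposet}.

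Next I would show that the cutting condition is compatible with stretching classes: for $n$ sufficiently large, each stretching class inside $\downset \st_n(\alpha)$ consists either entirely of cutting roots or entirely of non-cutting roots. The key observation is that membership in a rank 2 subsystem containing $\st_n(\alpha)$, together with one's position within it, is governed by integer relations among coefficients that are preserved when a pattern is translated along the stretched path. This reduces the description of cutting roots to selecting a finite subset of the stretching classes provided by Theorem \ref{stableposet}.

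With this in place, I would choose coordinates on the ambient space that reflect the stretching structure: coordinates attached to vertices outside the stretched path together with a bounded number of coordinates near each endpoint of the path (these together furnish $V'$), plus coordinates $z_1,\ldots, z_{n-e}$ associated with successive vertices in the interior of the stretched path. Restricting to $\st_n(\alpha)^\perp$ is then a single linear condition, and for a cutting root $\beta$ lying in a stable stretching class (one with finitely many members independent of $n$), the equation of $\beta^\perp$ becomes $f_i = 0$. For a moving stretching class, whose members are obtained by sliding a fixed pattern along the path, the equation splits as $g_j - z_k = 0$, with $g_j$ determined by the stationary part of the pattern and $z_k$ recording its position.

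The main obstacle will be matching the number of moving families to the constant $t$ from Theorem \ref{introlineardepth} and verifying that coordinates can be chosen so the equations take the clean form $g_j - z_k = 0$ rather than more complicated affine combinations of the $z_k$. I expect the $t$ moving families to correspond bijectively to the simple reflections accounting for the depth increment per unit stretch: each generates a one-parameter family of rank 2 subsystems whose basic roots translate uniformly along the path, and diagonalizing this translation action should produce the stated normal form. Additional care is required near the endpoints to pin down the threshold $e$ beyond which the uniform description is valid.
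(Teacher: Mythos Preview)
Your route is quite different from the paper's, and it has real gaps.

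The paper does not go through Theorem \ref{stableposet} at all. Instead it uses Proposition \ref{fracturelist}, which says that for a reduced expression $\alpha = s_{y_\ell}\cdots s_{y_1}(\alpha_{y_0})$ the fractures of $\alpha^\perp$ are exactly the intersections with the hyperplanes $\alpha_{y_\ell}^\perp,\ s_{y_\ell}(\alpha_{y_{\ell-1}})^\perp,\ \ldots,\ s_{y_\ell}\cdots s_{y_2}(\alpha_{y_1})^\perp$. It then invokes Lemma \ref{systematicexpression} to pick, after stretching enough, a reduced expression of ``type (1)'' in which every reflection at the elastic vertex can be replaced by a single sweep $s_{x_0}\cdots s_{x_n}$ or $s_{x_n}\cdots s_{x_0}$. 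The crucial technical device is a change of basis $\beta_{x_i} := \sum_{j\le i}\alpha_{x_j}$ under which the interior reflections $s_{x_1},\ldots,s_{x_{n-1}}$ act as transpositions of the $\beta_{x_i}$; the proof is then an induction on the type (1) expression, checking that each step preserves the shape ``$f_i$'' or ``$g_j - \beta_{x_k}$''. The count $t$ falls out because each sweep across the path contributes one new family $(g - \beta_{x_j})^\perp$, and the number of sweeps is the depth growth rate by Proposition \ref{whattis}.

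Your first step already has a gap. You argue that a cutting root $\beta$ lies in $\downset\st_n(\alpha)$ because the basic roots are minimal ``in the induced root poset''. But minimality there is with respect to the reflections $s_\beta, s_\gamma$ of the rank~2 subsystem, not the simple reflections of $W$; comparability in the rank~2 poset does not transfer to the ambient root poset that defines $\downset\st_n(\alpha)$. So the containment of cutting roots in the downset is unjustified, and with it the plan to read them off from the stretching classes of Theorem \ref{stableposet}.

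The second gap is in the normal-form step. Knowing that the cutting roots sit in finitely many stretching classes would tell you which roots they are, but not that the equations of their hyperplanes take the shape $g_j - z_k = 0$. In the $\alpha$-basis a root in a moving class has many nonzero path coefficients; it is only after passing to the partial-sum basis $\beta_{x_i}$ that a root with values $\ldots,a,\ldots,a,b,\ldots,b,\ldots$ on the path becomes $(\text{fixed part}) + (a-b)\beta_{x_k}$, i.e.\ linear in a single moving coordinate. Your phrase ``diagonalizing this translation action'' does not supply this basis, and without it there is no mechanism producing equations of the required form. This basis, together with the explicit list of fractures from a reduced expression, is really the heart of the paper's argument and has no counterpart in your outline.
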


Once we have this form for our arrangement, we get a uniform description of its characteristic polynomial (as defined in \cite{athanasiadis}). 

\begin{theorem}[Theorem \ref{charpoly}]
Let $\chi_n(q)$ be the characteristic polynomial of the arrangement of fractures of $\st_n(\alpha)$. Then there exist polynomials $p_0(q),\ldots, p_t(q)$ and an integer $e$ such that
\[
\chi_n(q) = \sum_{k=1}^t p_k(q)(q-k)^{n-e}
\]
for sufficiently large $n$.
\end{theorem}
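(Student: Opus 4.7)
My plan is to build directly on Theorem \ref{fractureform} and compute the characteristic polynomial via Athanasiadis's finite field method \cite{athanasiadis}. For any prime power $q$ at which the arrangement has good reduction --- which, given the integrality of the root system data, will hold for all but finitely many characteristics, uniformly in $n$ --- the value $\chi_n(q)$ equals the number of tuples $(v, z_1, \ldots, z_{n-e}) \in \F_q^{\dim V' + n - e}$ satisfying $f_i(v) \neq 0$ for every $i$ and $g_j(v) \neq z_k$ for every pair $(j,k)$.

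The core step is to evaluate this count by fixing $v$ first. For $v$ with $f_i(v) \neq 0$ for all $i$, set $m(v) = |\{g_1(v), \ldots, g_t(v)\}| \in \{1, \ldots, t\}$. Each coordinate $z_k$ may take any value in $\F_q$ except these $m(v)$ values, and does so independently, so the number of allowed tuples $(z_1, \ldots, z_{n-e})$ is $(q - m(v))^{n-e}$. Grouping the $v$ by the value of $m(v)$ yields
\[
\chi_n(q) = \sum_{k=1}^t p_k(q)\,(q-k)^{n-e},
\]
where $p_k(q)$ counts the $v \in \F_q^{\dim V'}$ with $f_i(v) \neq 0$ for all $i$ and $m(v) = k$. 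Crucially, $p_k$ does not depend on $n$: once $p_k(q)$ is known to be a polynomial in $q$, the theorem follows, since both sides of the displayed identity then agree as polynomial functions on an infinite set of prime powers.

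To show each $p_k$ is a polynomial, I would use inclusion-exclusion over the partition lattice $\Pi_t$ of $\{1, \ldots, t\}$. For $\pi \in \Pi_t$, let $q_\pi(q)$ count $v$ satisfying $f_i(v) \neq 0$ for all $i$ together with $g_j(v) = g_{j'}(v)$ whenever $j$ and $j'$ share a block of $\pi$; these are complements of hyperplane subarrangements restricted to a linear subspace of $V'$, so a second application of the finite field method identifies each $q_\pi$ with the value of a characteristic polynomial, hence a polynomial in $q$. Möbius inversion over $\Pi_t$ then expresses $p_k$ as an integer combination of the $q_\pi$'s, so $p_k$ is polynomial as well.

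The main obstacle I anticipate is the bookkeeping around integrality and uniform good reduction: one must verify that the forms $f_i$ and $g_j$ produced by the proof of Theorem \ref{fractureform} can be rescaled to lie in $\Z$, so that a single finite set of ``bad'' primes suffices both for the master arrangement and for every subarrangement arising in the inclusion-exclusion. This should be essentially routine given that the original data comes from a crystallographic root system, but it is a step that deserves care rather than handwaving, since in principle different subarrangements could have different sets of bad primes.
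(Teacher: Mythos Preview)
Your proposal is correct and follows essentially the same route as the paper: apply Athanasiadis's finite field method to the description in Theorem~\ref{fractureform}, fix the ``$V'$-coordinates'' first, and stratify by the number of distinct values among $g_1(v),\ldots,g_t(v)$ so that the remaining $n-e$ coordinates contribute a factor of $(q-k)^{n-e}$. Your partition-lattice M\"obius inversion is just a more explicit version of the paper's remark that each stratum is built from the hyperplanes $(f_i)^\perp$ and $(g_{j_1}-g_{j_2})^\perp$ by Boolean operations; and your worry about uniform good reduction is moot, since the $f_i$ and $g_j$ furnished by Theorem~\ref{fractureform} are fixed integral linear forms independent of $n$, so a single finite set of bad primes works for the whole family.
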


Evaluating the characteristic polynomial at $-1$ gives the number of regions of an arrangement up to sign, so in particular, we get a uniform description of the number of shards:

\begin{corollary}[Corollary \ref{exponentialgrowth}]
Let $d$ be the number of vertices of $G$. Then for sufficiently large $n$, the number of shards of $\st_n(\alpha)^\perp$ is
\[
(-1)^{d - e - 1}\sum_{k=1}^t p_k(-1)(k+1)^{n-e}
\]
In particular, it is $O((t+1)^n)$.
\end{corollary}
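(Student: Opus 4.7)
The plan is to derive this corollary as an essentially immediate consequence of Theorem \ref{charpoly}, via Zaslavsky's theorem. Recall that for a real hyperplane arrangement $\AA$ in an ambient real vector space of dimension $D$, the number of regions is $(-1)^D \chi_\AA(-1)$. The arrangement of fractures in question is a (sub)arrangement inside $\st_n(\alpha)^\perp$, and since $\st_n(G)$ has $d+n$ vertices, the ambient space $\st_n(\alpha)^\perp$ has dimension $d+n-1$. (The linear isomorphism guaranteed by Theorem \ref{fractureform} identifies this with $V'\times\R^{n-e}$, which has the same dimension, so this is consistent.) Therefore the number of shards equals $(-1)^{d+n-1}\chi_n(-1)$.

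Next, I would simply substitute $q=-1$ into the stable form given by Theorem \ref{charpoly}:
\[
\chi_n(-1) = \sum_{k=1}^t p_k(-1)(-1-k)^{n-e} = (-1)^{n-e}\sum_{k=1}^t p_k(-1)(k+1)^{n-e}.
\]
Multiplying by the sign from Zaslavsky, the prefactor becomes $(-1)^{d+n-1+n-e} = (-1)^{d+2n-1-e}$, and since $2n$ is even this simplifies to $(-1)^{d-e-1}$. This yields exactly the formula in the statement.

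For the asymptotic bound, I would observe that among the bases $k+1$ for $1\le k\le t$, the largest is $t+1$, so the sum is dominated by the $k=t$ term (or, if $p_t(-1)=0$, by an even smaller term). In either case the absolute value is bounded by $C(t+1)^{n-e}$ for a constant $C$ depending on the $p_k(-1)$, which is $O((t+1)^n)$.

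Since Theorem \ref{charpoly} is assumed, there is no real obstacle here: the argument is a bookkeeping exercise in Zaslavsky's theorem. The only point worth double-checking is the parity computation producing the sign $(-1)^{d-e-1}$, which depends on using the correct ambient dimension $d+n-1$ for the hyperplane $\st_n(\alpha)^\perp$ (rather than $d+n$ for the full Cartan space) and on the cancellation of $(-1)^{2n}$.
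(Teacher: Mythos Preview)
Your proposal is correct and follows essentially the same approach as the paper: the paper's proof is a one-line invocation of Zaslavsky's theorem (cited via Athanasiadis) in dimension $d+n-1$, and you have simply spelled out the sign computation and the asymptotic bound in more detail.
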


In future research, we hope to explore how this perspective informs the aforementioned connection to representations of preprojective algebras.

\subsection{Acknowledgments} 

I would like to thank David Speyer for extensive advice, feedback, and encouragement.

Propositions \ref{fractureinductionstep} and \ref{fracturelist} originate in unpublished work of David Speyer and Hugh Thomas, and I thank them for allowing these results to appear here.

I was supported in part by NSF grants DMS-1840234 and DMS-1855135.

\section{Setup}

\subsection{Root systems and Coxeter groups}

Here we introduce the basic terminology and notation of root systems and Coxeter groups which we will use. For details, see Chapter 4 of \cite{bjornerbrenti}.

A \textbf{Coxeter group} $W$ is a group defined by generators and relations as
\[
\<s_1,\ldots, s_n\mid (s_is_j)^{m_{ij}} = 1\>
\]
for $m_{ij} \in \N \cup \{\infty\}$ such that $m_{ij} = m_{ji}$, $m_{ii} = 1$, and $m_{ij} \geq 2$ for $i\neq j$. (If $m_{ij} = \infty$, we omit that relation.) 

We store the data of a Coxeter group in its \textbf{Coxeter diagram} $G$, which is a graph with a vertex for each generator and an edge between vertices $i$ and $j$ if $m_{ij} \geq 3$, labeled by $m_{ij}$ if $m_{ij} \geq 4$.

A \textbf{Cartan matrix} for a Coxeter group is an $n\times n$ matrix $A$ such that $A_{ij} \leq 0$ for $i\neq j$, and
\[
\begin{aligned}
A_{ii} &= 2 && \\
A_{ij} &= 0 && m_{ij} = 2 \\
A_{ij}A_{ji} &= 4\cos^2 \frac{\pi}{m_{ij}} && 3\leq m_{ij} < \infty \\
A_{ij}A_{ji} &\geq 4 && m_{ij} = \infty	
\end{aligned}
\]
In this paper, we further assume that the $A_{ij}$ are integers, in which case the Cartan matrix is called \textbf{crystallographic}. In particular, this requires $m_{ij}\in \{1,2,3,4,6,\infty\}$.

Let $V$ be a real vector space with a basis $\{\alpha_1,\ldots, \alpha_n\}$ corresponding to the generators $s_i$. Then we define a bilinear pairing $(-,-):V\times V\to \R$ by $(\alpha_i, \alpha_j) = A_{ij}$. We let the generators act on $V$ by
\[
s_i(\beta) = \beta - (\alpha_i, \beta)\alpha_i.
\]
and this turns out to define a faithful representation of $W$. This action preserves the pairing, in that $(w\beta, w\gamma) = (\beta, \gamma)$. 

We call the $\alpha_i$ \textbf{simple roots}, and define the \textbf{root system} $\Phi$ to be the union of their $W$-orbits. Just as the simple roots are associated to the simple reflections, to any root $\alpha = w\alpha_i$ we associate the element $s_\alpha := ws_iw^{-1}$, which acts by
\[
ws_iw^{-1}(\beta) = \beta - (\alpha, \beta)\alpha.
\] 
Since each root is a unique linear combination of the simple roots, which correspond to the vertices of the Coxeter diagram, we will view roots as integer-valued functions on the vertices, and refer to their ``coefficients'' and ``values'' interchangeably. The coefficients of a root are either all positive or all negative, which partitions $\Phi$ into a set of \textbf{positive roots} $\Phi^+$ and \textbf{negative roots} $\Phi^-$. The negative of a root is a root, so we usually only need to consider positive roots.

We will want an explicit rule for applying reflections to functions on the diagram. Let $x$ be a vertex in the Coxeter diagram with edges to vertices $y_1,\ldots, y_k$. Let $\alpha$ be a root. Then $s_x(\alpha)$ will differ from $\alpha$ only in its value at $x$, which will be $\left(\sum_j -A_{xy_j}\alpha(y_j)\right) - \alpha(x)$.

In what follows, we will use the notation given in this section by default: 
\begin{itemize}
\item $W$ is a Coxeter group, with generators $s_i$;
\item $G$ is its Coxeter diagram;
\item $A$ is a Cartan matrix for $W$;
\item $\Phi$ is the associated root system, with simple roots $\alpha_i$.
\end{itemize}

\subsection{Reduced expressions and the root poset}

We will be interested in ways of obtaining a positive root by applying simple reflections to a simple root. 

\begin{definition}
An expression of the form
\[
\alpha = s_{y_n}\cdots s_{y_2}s_{y_1}(\alpha_{y_0})
\]
is a \textbf{reduced expression} for $\alpha$ if $n$ is minimal among all such expressions for $\alpha$.
\end{definition}

This is analogous to the notion of reduced words for elements of $W$. We also define the analogue of length:

\begin{definition}
The \textbf{depth} of a positive root $\alpha$ is the length of a reduced expression for $\alpha$, counting the simple root it starts with.
\end{definition}

We capture all the expressions for roots in a poset, analogous to the weak order on $W$:

\begin{definition}[{\cite[Definition 4.6.3]{bjornerbrenti}}]
Let $\alpha, \beta\in \Phi^+$. We say that $\alpha\leq \beta$ in the \textbf{root poset} if there exist simple reflections $s_1,\ldots, s_k$ such that:
\begin{itemize}
\item[(1)] $\beta = s_ks_{k-1}\cdots s_1(\alpha)$
\item[(2)] $\depth(s_is_{i-1}\cdots s_1(\alpha)) = \depth(\alpha) + i$ for all $1\leq i\leq k$.
\end{itemize}
\end{definition}

The root poset is graded by depth, and reduced expressions correspond to saturated chains based at simple roots.

We can also describe the root poset by its cover relations:

\begin{lemma}[{\cite[Lemma 4.6.4]{bjornerbrenti}}]
Let $\alpha_i$ be a simple root, $s_i$ the associated reflection, and $\beta$ an arbitrary positive root. Then $\beta \lessdot s_i(\beta)$ if $s_i(\beta) - \beta$ is a positive multiple of $\alpha_i$, and these are all the cover relations.
\end{lemma}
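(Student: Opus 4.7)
The lemma asserts both an implication and a classification. Since the root poset is graded by depth (directly from its definition, where each step in the chain raises depth by exactly $1$), a cover relation $\beta \lessdot \gamma$ is exactly a pair with $\gamma = s_i(\beta)$ for some simple reflection $s_i$ and $\depth(s_i(\beta)) = \depth(\beta) + 1$. Both assertions therefore reduce to the single equivalence: for $\beta \in \Phi^+$, $\depth(s_i(\beta)) = \depth(\beta) + 1$ if and only if $(\alpha_i, \beta) < 0$, i.e., $s_i(\beta) - \beta = -(\alpha_i, \beta)\alpha_i$ is a positive multiple of $\alpha_i$.

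My plan is to prove this equivalence by induction on $\depth(\beta)$. First I would note the easy bound $|\depth(s_i(\beta)) - \depth(\beta)| \le 1$, obtained by prepending $s_i$ to a reduced expression on either side; together with the observation that $s_i(\beta) = \beta$ exactly when $(\alpha_i, \beta) = 0$, this reduces the claim to matching a $\pm 1$ depth shift with the sign of $(\alpha_i, \beta)$. The base case $\depth(\beta) = 1$ gives $\beta = \alpha_j$, and the non-positivity of off-diagonal Cartan entries makes the sign correspondence immediate. For the inductive step, I would pick a reduced expression $\beta = s_j(\gamma)$ with $\depth(\gamma) = \depth(\beta) - 1$; the inductive hypothesis applied to $\gamma$ and $s_j$ forces $(\alpha_j, \gamma) < 0$ (otherwise $\depth(\beta) \le \depth(\gamma)$, a contradiction), so $\beta - \gamma$ is a positive multiple of $\alpha_j$. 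For an arbitrary simple reflection $s_i$, the case $i = j$ is immediate: $s_i(\beta) = \gamma$ has smaller depth, and $(\alpha_i, \beta) = -(\alpha_j, \gamma) > 0$, matching the sign rule. When $i \neq j$, I would analyze $s_i(\beta) = s_i s_j(\gamma)$ via the rank-two dihedral subgroup $\langle s_i, s_j\rangle$ acting on $\gamma$, combined with the inductive hypothesis applied to $\gamma$ and $s_i$.

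The main obstacle is the case $i \neq j$ in the inductive step. Although the crystallographic condition restricts $m_{ij}$ to the finite list $\{2, 3, 4, 6, \infty\}$, one must combine the rank-two orbit structure with the inductive hypothesis to determine the sign of $(\alpha_i, \beta)$ and the depth change simultaneously, while verifying that the induction is always applied to roots of strictly smaller depth than $\beta$.
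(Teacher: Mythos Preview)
The paper does not actually prove this lemma: it is quoted verbatim from Bj\"orner--Brenti \cite[Lemma 4.6.4]{bjornerbrenti} with no argument supplied, so there is no in-paper proof to compare against. Your proposal is therefore not redundant with anything here; it is an attempt to supply what the paper outsources.

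As for the approach itself: the reduction to the single equivalence ``$\depth(s_i\beta)=\depth(\beta)+1 \iff (\alpha_i,\beta)<0$'' is correct, the bound $|\depth(s_i\beta)-\depth(\beta)|\le 1$ is fine, and the base case and the $i=j$ branch of the inductive step are handled cleanly. The genuine issue is exactly the one you flag: in the $i\neq j$ case, the inductive hypothesis applied to $\gamma$ and $s_i$ controls $\depth(s_i\gamma)$, not $\depth(s_is_j\gamma)$, and bridging that gap via a rank-two dihedral analysis requires real work that you have not carried out. Even with $m_{ij}\in\{2,3,4,6,\infty\}$, one must track both signs and depth through a chain of applications of $s_i,s_j$ to $\gamma$, repeatedly invoking the hypothesis at roots of strictly smaller depth; the $m_{ij}=\infty$ case in particular does not reduce to a finite orbit check. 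So the proposal is a plausible outline, but as written it is a plan rather than a proof.

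For comparison, the argument in Bj\"orner--Brenti avoids this difficulty by working with the alternative characterization $\depth(\beta)=\min\{\ell(w):w\beta\in\Phi^-\}$. With that definition the equivalence follows almost immediately from standard facts about how a simple reflection changes length and the inversion set, with no dihedral case analysis needed. If you want a self-contained proof, establishing that characterization first (or citing it) is considerably cleaner than pushing through the rank-two induction.
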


Figure \ref{a3rootposet} shows the root poset of $A_3$. It gives 4 different reduced expressions for the root with value $1$ at each vertex:
\[
s_3s_2(\alpha_1) = s_3s_1(\alpha_2) = s_1s_3(\alpha_2) = s_1s_2(\alpha_3)
\]

\begin{figure}
\centering
\begin{tikzpicture}
\node(100) at (-3, 0) {$100$};
\node(010) at (0, 0) {$010$};
\node(001) at (3, 0) {$001$};

\node(110) at (-1.5, 1.5) {$110$};
\node(011) at (1.5, 1.5) {$011$};

\node(111) at (0, 3) {$111$};

\draw (100) --node[above left]{$s_2$} (110);
\draw (010) --node[above right]{$s_1$} (110);
\draw (010) --node[above left]{$s_3$} (011);
\draw (001) --node[above right]{$s_2$} (011);
\draw (110) --node[above left]{$s_3$} (111);
\draw (011) --node[above right]{$s_1$} (111);
\end{tikzpicture}
\caption{The root poset of $A_3$. Each cover is labeled with the simple reflection that induces it.}
\label{a3rootposet}
\end{figure}
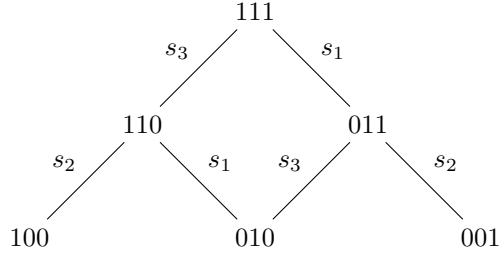

\subsection{Stretching}
\label{stretching}

Recall from Definition \ref{stretchdef} the definition of \textbf{elastic data} $(x, L_x, R_x)$ for a Coxeter diagram $G$ and the resulting stretched diagrams $\st_n(G)$. We construct a Cartan matrix and root system for $\st_n(G)$ by making the simple roots associated to the vertices on the stretched path pair symmetrically.

\begin{definition}
\label{stretchingrootsystems}
Let $A$ be a Cartan matrix for the Coxeter diagram $G$. Then the \textbf{$n$-stretched Cartan matrix}, $\st_n(A)$, has rows and columns indexed by the vertices of $\st_n(G)$, with
\[
\st_n(A)_{yz} = \begin{cases}
A_{yz} & y,z\notin\{x_0,\ldots, x_n\} \\
A_{xz} & (y = x_0 \text{ and } z\in L_x) \text{ or } (y = x_n \text{ and } z\in R_x) \\
A_{yx} & (z = x_0 \text{ and } y\in L_x) \text{ or } (z = x_n \text{ and } y\in R_x) \\
2 & y = z = x_i \\
-1 & y = x_i, z = x_{i\pm 1}\\
0 & \text{otherwise}
\end{cases}
\]
If $\Phi$ is the root system associated to $A$, then the \textbf{$n$-stretched root system}, $\st_n(\Phi)$, is the root system associated to $\st_n(A)$. 
\end{definition} 

In what follows, $x$, $L_x$, and $R_x$ will refer to elastic data by default.

Now for any integer-valued function $\alpha$ on the vertices of $G$, we recall from Definition \ref{rootstretchdef} the definition of $\st_n(\alpha)$.

\begin{proposition}
\label{stretchingroots}
Let $\alpha\in \Phi$. Then $\st_n(\alpha) \in \st_n(\Phi)$.
\end{proposition}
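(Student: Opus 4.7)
The plan is to induct on the depth of $\alpha$, exhibiting $\st_n(\alpha)$ as the image of a simple root of $\st_n(\Phi)$ under a sequence of reflections in the stretched Weyl group $\st_n(W)$. For each simple reflection $s_y$ used to build $\alpha$, we need an analogous reflection in $\st_n(W)$ whose action on functions on $\st_n(G)$ mirrors that of $s_y$ on functions on $G$. For $y\neq x$ the corresponding simple reflection $s_y$ in $\st_n(W)$ will work unchanged, so the real task is to find the correct replacement for $s_x$.

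The key observation is that the stretched path $x_0,\ldots,x_n$ induces a type $A_{n+1}$ subsystem of $\st_n(\Phi)$: the restriction of $\st_n(A)$ to these vertices is exactly the $A_{n+1}$ Cartan matrix. Its highest root
\[
\rho := \alpha_{x_0}+\alpha_{x_1}+\cdots+\alpha_{x_n}
\]
is therefore a root of $\st_n(\Phi)$, and it is precisely $\st_n(\alpha_x)$; combined with the trivial observation that $\st_n(\alpha_y)=\alpha_y$ for $y\neq x$, this handles the base case of simple roots. I claim $s_\rho$ is the correct lift of $s_x$: namely,
\[
s_\rho(\st_n(\alpha)) = \st_n(s_x(\alpha))
\]
for any integer-valued function $\alpha$ on $G$. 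To check this, write $c:=\alpha(x)$, $\ell:=\sum_{y\in L_x}(-A_{xy})\alpha(y)$, and $r:=\sum_{y\in R_x}(-A_{xy})\alpha(y)$, so that $s_x(\alpha)$ differs from $\alpha$ only at $x$, where the new value is $\ell+r-c$. The pairing $(\rho,\st_n(\alpha))=\sum_{i=0}^n(\alpha_{x_i},\st_n(\alpha))$ telescopes: for $0<i<n$ we get $(\alpha_{x_i},\st_n(\alpha))=2c-c-c=0$ because $\st_n(\alpha)$ is constant equal to $c$ on the path, leaving only the endpoint terms $(\alpha_{x_0},\st_n(\alpha))=c-\ell$ and $(\alpha_{x_n},\st_n(\alpha))=c-r$. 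Thus $s_\rho(\st_n(\alpha))=\st_n(\alpha)-(2c-\ell-r)\rho$, which agrees with $\st_n(\alpha)$ off the path and equals $\ell+r-c$ at each $x_i$, matching $\st_n(s_x(\alpha))$ exactly.

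With this identity in hand, the induction is routine. Writing $\alpha=s_y(\alpha')$ with $\alpha'$ of smaller depth, when $y\neq x$ one verifies $\st_n(s_y(\alpha'))=s_y(\st_n(\alpha'))$ directly from the reflection formula, with $x_0$ or $x_n$ playing the role of $x$ whenever $y\in L_x\cup R_x$, since $\st_n(\alpha')$ takes the same value $\alpha'(x)$ at both stretched endpoints. When $y=x$, the displayed identity gives $\st_n(s_x(\alpha'))=s_\rho(\st_n(\alpha'))$. Either way, $\st_n(\alpha)$ is obtained from $\st_n(\alpha')\in\st_n(\Phi)$ by a reflection in $\st_n(W)$, so it lies in $\st_n(\Phi)$.

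The main conceptual obstacle is recognizing the right lift of $s_x$. A direct attempt through a word in the path reflections, for example the palindromic product $s_{x_0}s_{x_1}\cdots s_{x_n}\cdots s_{x_1}s_{x_0}$ (which can be checked to do the job), would require an auxiliary induction on $n$ and obscures why the construction is natural. Packaging that word as reflection through the single root $\rho$, the highest root of the path subsystem, reduces the entire calculation to the one-line telescoping pairing above.
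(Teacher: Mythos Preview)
Your proof is correct and follows the paper's approach: induct on depth and lift $s_x$ to an element of $\st_n(W)$ that intertwines with $\st_n$. The paper uses the explicit palindromic product $s_{x_0}s_{x_1}\cdots s_{x_n}\cdots s_{x_1}s_{x_0}$ (the ``alternative'' you mention parenthetically), whereas you recognize this same element as the reflection $s_\rho$ through the highest root of the path's $A_{n+1}$ subsystem, which replaces a step-by-step verification with the single telescoping pairing computation.
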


\begin{proof}
It will suffice to assume $\alpha$ is positive. We proceed by induction on $\depth(\alpha)$. First consider the base case that $\alpha$ is simple: either $\st_n(\alpha)$ is also simple, or it has value 1 on the stretched path and 0 elsewhere, and this is straightforward to obtain by reflections of a simple root.

Now consider any positive root $\alpha$. If it is possible to reflect at a vertex other than $x$ and decrease the value there, obtaining a root $\alpha'$, then we can perform the same operation to $\st_n(\alpha)$ and get $\st_n(\alpha')$. By the induction hypothesis, $\st_n(\alpha')$ is a root, so $\st_n(\alpha)$ is too. 

Otherwise, we can reflect at $x$ and decrease the value there to get $\alpha' := s_x(\alpha)$. Then
\begin{align*}
\st_n(\alpha') &= s_{x_0}s_{x_1}\cdots s_{x_n} s_{x_{n-1}} \cdots s_{x_1}s_{x_0} (\st_n(\alpha)) \\
&= s_{x_n}s_{x_{n-1}}\cdots s_{x_0} s_{x_{1}} \cdots s_{x_{n-1}}s_{x_n} (\st_n(\alpha)).
\end{align*}
By the induction hypothesis, $\st_n(\alpha')$ is a root, so $\st_n(\alpha)$ is too.
\end{proof}

As an aside, we note that a reverse version of Proposition \ref{stretchingroots} also holds: roots with repeated coefficients can be squished to give roots of a smaller diagram.

\begin{proposition}
\label{squishing}
Let $\alpha$ be any integer-valued function on the vertices of $G$ such that $\st_1(\alpha)$ is a root of $\st_1(\Phi)$. Then $\alpha$ is a root of $\Phi$. 
\end{proposition}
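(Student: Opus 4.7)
The natural plan is to mirror the proof of Proposition \ref{stretchingroots} in reverse, inducting on $\depth(\st_1(\alpha))$. Since a root has uniform sign, we may take $\alpha \geq 0$, so $\st_1(\alpha) \in \st_1(\Phi)^+$. The base case is immediate: a simple root of $\st_1(\Phi)$ with equal values at $x_0$ and $x_1$ must be $\alpha_z$ for some $z \neq x_0, x_1$, giving $\alpha = \alpha_z$.

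For the inductive step, pick any descent $y$ of $\st_1(\alpha)$. If $y \notin \{x_0, x_1\}$, I would verify directly that $s_y(\st_1(\alpha)) = \st_1(s_y(\alpha))$ (using that $y$'s neighbors in $\st_1(G)$ match those in $G$, with $x_0$ or $x_1$ substituted for $x$ at the same Cartan entry and the same $\st_1(\alpha)$-value) and apply induction to get $s_y(\alpha) \in \Phi$. Otherwise every descent lies in $\{x_0, x_1\}$, and I would invoke the identity $\st_1(s_x(\alpha)) = s_{x_0}s_{x_1}s_{x_0}(\st_1(\alpha))$ from the proof of Proposition \ref{stretchingroots}, arguing that the right-hand side has strictly smaller depth so that induction gives $s_x(\alpha) \in \Phi$.

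The main obstacle is verifying this depth reduction. Setting $v = \alpha(x)$, $S_L = \sum_{z \in L_x}(-A_{xz})\alpha(z)$, and $S_R$ analogously, the descent conditions at $x_0$ and $x_1$ are $S_L < v$ and $S_R < v$. A direct value-tracking calculation shows that $s_{x_0}s_{x_1}s_{x_0}$ lowers depth by $3$ when both $x_0, x_1$ are descents, and by $1$ when only one is a descent but $S_L + S_R < 2v$. The awkward sub-case is when (say) only $x_0$ is a descent and $S_L + S_R \geq 2v$: here the triple reflection raises depth rather than lowering it, so the naive induction stalls.

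To exclude this sub-case, I plan to use the symmetric $W$-invariant bilinear form $B$ on the root lattice (obtained by a positive diagonal symmetrization of $A$, which exists in the crystallographic setting). The analysis in the previous paragraph shows that in this sub-case $\alpha$ has no descent in $\Phi$ at any vertex: non-$x$ descents are inherited from $\st_1(\alpha)$ via the identity $(\st_1(A)\,\st_1(\alpha))(y) = (A\alpha)(y)$ for $y \neq x$, and $(A\alpha)(x) = 2v - S_L - S_R \leq 0$ excludes $x$. Hence $B(\alpha, \alpha) = \sum_y d_y\, \alpha(y)(A\alpha)(y) \leq 0$. A computation paralleling the matching of Cartan contributions in Definition \ref{stretchingrootsystems} verifies that stretching preserves $B$, so $B(\st_1(\alpha), \st_1(\alpha)) = B(\alpha, \alpha)$. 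But $B$ is strictly positive on roots of $\st_1(\Phi)$ (since real roots are $W$-conjugate to simple roots, on which $B$ takes positive values $2d_i$), yielding a contradiction. This rules out the bad sub-case and closes the induction.
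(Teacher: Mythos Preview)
Your argument is correct and, at its core, the same as the paper's: induct on $\depth(\st_1(\alpha))$, handle descents off $\{x_0,x_1\}$ directly, use the identity $s_{x_0}s_{x_1}s_{x_0}(\st_1(\alpha)) = \st_1(s_x(\alpha))$ for the on-path case, and invoke positivity of an invariant form on the root to guarantee the needed depth drop. The paper packages this last step more directly. It sets $\wt\alpha_x := \alpha_{x_0}+\alpha_{x_1}$ (and $\wt\alpha_y := \alpha_y$ otherwise), expands
\[
2 \;=\; (\st_1(\alpha),\st_1(\alpha)) \;=\; \sum_{y\in G}\alpha(y)\,(\wt\alpha_y,\st_1(\alpha)),
\]
and simply picks any $y$ with $(\wt\alpha_y,\st_1(\alpha)) > 0$. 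When $y = x$ this pairing equals precisely $2v - S_L - S_R$, so the triple reflection lowers two coefficients and hence depth; your ``bad sub-case'' never arises, and no contradiction argument or auxiliary symmetrized form is needed. Your detour through $B$ and the identity $B(\st_1(\alpha),\st_1(\alpha)) = B(\alpha,\alpha)$ reproduces exactly this inequality by contraposition.

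One small omission: your base case (depth $1$) misses $\alpha = \alpha_x$, for which $\st_1(\alpha_x) = \alpha_{x_0}+\alpha_{x_1}$ has depth $2$ and the triple reflection lands on a negative root, so the induction hypothesis is unavailable. The paper sidesteps this by taking $\st_1(\alpha) = \wt\alpha_y$ (including $y=x$) as the base case.
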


\begin{proof}

We assume without loss of generality that $\st_1(\alpha)$ is positive. Let $\wt{\alpha} := \st_1(\alpha)$, and define roots in $\st_1(\Phi)$ by
\[
\wt\alpha_y := \begin{cases}
\alpha_y & y\neq x \\
\alpha_{x_0} + \alpha_{x_1} & y = x
\end{cases}
\]
Then $\wt{\alpha}$ is a nonnegative linear combination of these.

We proceed by induction on $\depth(\wt\alpha)$. The base case is when $\wt\alpha = \wt\alpha_y$, which is trivial. Now suppose $\wt\alpha$ is different from these. We have $(\wt\alpha, \wt\alpha) = 2 > 0$, so at least one of the pairings $(\wt\alpha_y, \wt\alpha)$ is positive. 

If $(\wt\alpha_y, \wt\alpha) > 0$ for some $y\neq x$, then $s_y(\wt\alpha) - \wt\alpha = -(\wt\alpha_y, \wt\alpha)\wt\alpha_y$ is a negative multiple of $\wt\alpha_y$, and so $s_y(\wt\alpha) < \wt\alpha$ in the root poset. We still have $s_y(\wt\alpha)(x_0) = s_y(\wt\alpha)(x_1)$, and so by the induction hypothesis there exists a root $\alpha'\in \Phi$ such that $\st_1(\alpha') = s_y(\wt\alpha)$. But then $s_y(\alpha') = \alpha$, so $\alpha$ is a root.

On the other hand, if $(\wt\alpha_x, \wt\alpha) > 0	$, then $s_{x_0}s_{x_1}s_{x_0}(\wt\alpha) = \wt\alpha - (\wt\alpha_x, \wt\alpha)\wt\alpha_x$ has two coefficients which are smaller than those of $\wt\alpha$, so in applying $s_{x_0}s_{x_1}s_{x_0}$ we must have gone down in the root poset at least twice, implying $\depth(s_{x_0}s_{x_1}s_{x_0}(\wt\alpha)) < \depth(\wt\alpha)$. As above, by the induction hypothesis there is some $\alpha'\in \Phi$ such that $\st_1(\alpha') = s_{x_0}s_{x_1}s_{x_0}(\wt\alpha)$, and direct computation shows that $s_x(\alpha') = \alpha$, implying $\alpha$ is also a root.
\end{proof}

\subsection{Stretching and reduced expressions}
\label{stretchingandexpressions}

Iterating the process in Proposition \ref{stretchingroots} gives an expression for $\st_n(\alpha)$ in terms of simple reflections applied to a simple root, but it may not be a reduced expression, because two roots in a cover relation may no longer be comparable once stretched. We examine when this happens, and obtain a result on the depth of stretched roots in the process. 

Consider a positive root $\alpha$ such that $s_x(\alpha) < \alpha$. We examine just the coefficients at $x$ and its neighbors:
\begin{center}
\begin{tikzpicture}
\node(1) at (-1, 0.75) {$a_1$};
\node(2) at (-1, 0.25) {$a_2$};
\node(dots) at (-1, -0.25) {$\vdots$};
\node(3) at (-1, -0.75) {$a_k$};
\node(4) at (0, 0) {$b$};
\node(5) at (1, 0.75) {$c_1$};
\node(6) at (1, 0.25) {$c_2$};
\node(dots2) at (1, -0.25) {$\vdots$};
\node(7) at (1, -0.75) {$c_\ell$};

\draw (1) -- (4);
\draw (2) -- (4);
\draw (3) -- (4);
\draw (4) -- (5);
\draw (4) -- (6);
\draw (4) -- (7);
\end{tikzpicture}
\end{center}

Let $y_1,\ldots, y_k$ be the left neighbors of $x$ and let $z_1,\ldots, z_\ell$ be the right neighbors. Let $S_L := \sum_i -A_{xy_i}a_i$ and $S_R := \sum_j -A_{xz_j} c_j$. Then to assume $s_x(\alpha) < \alpha$ means $S_L + S_R - b < b$. In particular, one of $S_L$ and $S_R$, without loss of generality $S_L$, is less than $b$. Then the situation splits into three cases depending on $S_R$:
\begin{itemize}
\item[(1)] $S_R = b$.
\item[(2)] $S_R < b$.
\item[(3)] $S_R > b$.
\end{itemize}
This trichotomy classifies the different outcomes of stretching the roots in a cover relation:
\begin{lemma}
\label{stretchedcovers}
\begin{itemize}
\item[(1)] In Case 1 above, $\st_n(s_x(\alpha)) < \st_n(\alpha)$, and 
\[
\on{depth}(\st_n(\alpha)) = \on{depth}(\st_n(s_x(\alpha))) + (n+1).
\]
\item[(2)] In Case 2, $\st_n(s_x(\alpha)) < \st_n(\alpha)$, and
\[
\on{depth}(\st_n(\alpha)) = \on{depth}(\st_n(s_x(\alpha))) + (2n+1).
\]
\item[(3)] In Case 3, $\st_n(s_x(\alpha))$ and $\st_n(\alpha)$ are incomparable, and
\[
\on{depth}(\st_n(\alpha)) = \on{depth}(\st_n(s_x(\alpha))) + 1.
\]
\end{itemize}
\end{lemma}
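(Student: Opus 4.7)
The plan is to handle all three cases uniformly by constructing explicit sequences of reflections applied to $\st_n(\alpha)$ along the stretched path; because the root poset is graded by depth, any saturated chain of covers witnesses the depth difference between its endpoints.

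The common opening move is to start at $\st_n(\alpha)$ (whose path values are all $b$) and apply the reflections $s_{x_0}, s_{x_1}, \ldots, s_{x_n}$ in order. A direct computation with the reflection rule, using that pairings along the stretched path are all $-1$, shows that each of $s_{x_0},\ldots,s_{x_{n-1}}$ changes the value at $x_i$ from $b$ to $S_L$, while $s_{x_n}$ changes the value at $x_n$ from $b$ to $v := S_L + S_R - b$; in every case these are $n+1$ consecutive descents. In Case 1, $v = S_L$ because $S_R = b$, so we terminate at $\st_n(s_x(\alpha))$, proving (1). In Case 2, $v < S_L$, and I would continue with $s_{x_{n-1}},\ldots,s_{x_0}$: each such step takes the value at $x_i$ from $S_L$ down to $v$, another descent, so after $n$ more covers we reach $\st_n(s_x(\alpha))$ with $2n+1$ covers in total, proving (2).

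Case 3 is the main obstacle. Here $S_L < v < b$, so the initial $n+1$ descents end at a root $\gamma_n$ with path values $S_L,\ldots,S_L,v$, which is \emph{not} $\st_n(s_x(\alpha))$. Applying $s_{x_{n-1}},\ldots,s_{x_0}$ now raises each $x_i$ from $S_L$ to $v$; these are $n$ ascents ending at $\st_n(s_x(\alpha))$. Thus $\gamma_n$ is a common lower bound, and grading gives $\depth(\st_n(\alpha)) - \depth(\st_n(s_x(\alpha))) = (n+1) - n = 1$. For incomparability, observe that $\st_n(\alpha) - \st_n(s_x(\alpha)) = (b - v)\sum_{i=0}^n \alpha_{x_i}$ has nonzero coefficient at $n+1$ distinct simple roots; since any comparability would force a saturated chain of length equal to the depth difference (namely $1$), and a single cover alters exactly one simple-root coefficient, the two roots must be incomparable for $n \geq 1$. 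The delicate part of Case 3 is the bookkeeping — verifying that the same interior reflection $s_{x_i}$ acts as a descent in the first half of the sequence and as an ascent in the second half, depending on the current local configuration of neighbor values — but this reduces to tracking the values $S_L, v, b$ propagating across the path at each step.
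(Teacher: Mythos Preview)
Your proposal is correct and follows essentially the same route as the paper: both arguments track the sequence $s_{x_0}s_{x_1}\cdots s_{x_n}s_{x_{n-1}}\cdots s_{x_0}$ applied to $\st_n(\alpha)$, observe that the first $n+1$ steps replace $b$ by $S_L$ (and then by $b'$ at $x_n$) and hence descend, and then split into cases according to whether the second half is trivial ($b'=S_L$), continues to descend ($b'<S_L$), or ascends ($b'>S_L$). Your incomparability argument in Case~3---depth difference $1$ forces a cover, but the two roots differ at $n+1$ coordinates---is exactly the paper's, with the harmless extra care of noting $n\geq 1$.
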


\begin{proof}
We know from the proof of Proposition \ref{stretchingroots} that 
\[
\st_n(s_x(\alpha)) = s_{x_0}s_{x_1}\cdots s_{x_n}s_{x_{n-1}}\cdots s_{x_1}s_{x_0}(\st_n(\alpha))
\]
We thus check, in each case of the trichotomy, whether each of these simple reflections steps up or down in the root poset, and use the fact that the poset is graded by depth. Let $b' := S_L + S_R - b$ be the value at $x$ in $s_x(\alpha)$. Then Figure \ref{backandforth} shows the result of applying each reflection in turn.

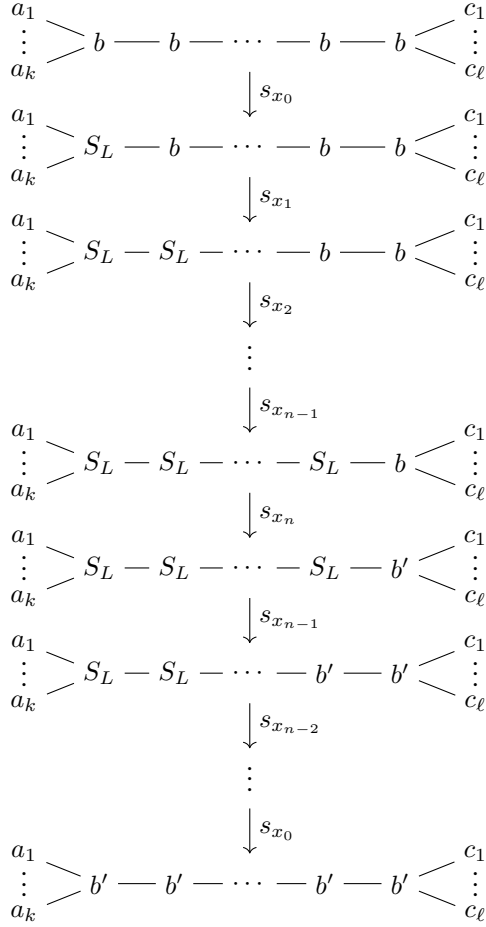
\begin{figure}
\centering
\begin{tikzpicture}
\node(1a) at (-1, 0.4) {$a_1$};
\node(dotsa) at (-1, 0.1) {$\vdots$};
\node(ka) at (-1, -0.4) {$a_k$};
\node(40a) at (0, 0) {$b$};
\node(41a) at (1, 0) {$b$};
\node(4dotsa) at (2, 0) {$\cdots$};
\node(4n-1a) at (3, 0) {$b$};
\node(4na) at (4, 0) {$b$};
\node(5a) at (5, 0.4) {$c_1$};
\node(dots2a) at (5, 0.1) {$\vdots$};
\node(la) at (5, -0.4) {$c_\ell$};

\draw (1a) -- (40a);
\draw (ka) -- (40a);
\draw (40a) -- (41a);
\draw (41a) -- (4dotsa);
\draw (4dotsa) -- (4n-1a);
\draw (4n-1a) -- (4na);
\draw (4na) -- (5a);
\draw (4na) -- (la);

\draw[->] (2, -0.4) -- node[right]{$s_{x_0}$} (2, -1.0);

\node(1b) at (-1, -1.0) {$a_1$};
\node(dotsb) at (-1, -1.3) {$\vdots$};
\node(kb) at (-1, -1.8) {$a_k$};
\node(40b) at (0, -1.4) {$S_L$};
\node(41b) at (1, -1.4) {$b$};
\node(4dotsb) at (2, -1.4) {$\cdots$};
\node(4n-1b) at (3, -1.4) {$b$};
\node(4nb) at (4, -1.4) {$b$};
\node(5b) at (5, -1.0) {$c_1$};
\node(dots2b) at (5, -1.3) {$\vdots$};
\node(lb) at (5, -1.8) {$c_\ell$};

\draw (1b) -- (40b);
\draw (kb) -- (40b);
\draw (40b) -- (41b);
\draw (41b) -- (4dotsb);
\draw (4dotsb) -- (4n-1b);
\draw (4n-1b) -- (4nb);
\draw (4nb) -- (5b);
\draw (4nb) -- (lb);

\draw[->] (2, -1.8) -- node[right]{$s_{x_1}$} (2, -2.4);

\node(1c) at (-1, -2.4) {$a_1$};
\node(dotsc) at (-1, -2.7) {$\vdots$};
\node(kc) at (-1, -3.2) {$a_k$};
\node(40c) at (0, -2.8) {$S_L$};
\node(41c) at (1, -2.8) {$S_L$};
\node(4dotsc) at (2, -2.8) {$\cdots$};
\node(4n-1c) at (3, -2.8) {$b$};
\node(4nc) at (4, -2.8) {$b$};
\node(5c) at (5, -2.4) {$c_1$};
\node(dots2c) at (5, -2.7) {$\vdots$};
\node(lc) at (5, -3.2) {$c_\ell$};

\draw (1c) -- (40c);
\draw (kc) -- (40c);
\draw (40c) -- (41c);
\draw (41c) -- (4dotsc);
\draw (4dotsc) -- (4n-1c);
\draw (4n-1c) -- (4nc);
\draw (4nc) -- (5c);
\draw (4nc) -- (lc);

\draw[->] (2, -3.2) -- node[right]{$s_{x_2}$} (2, -3.8);

\node at (2, -4.1) {$\vdots$};

\draw[->] (2, -4.6) -- node[right]{$s_{x_{n-1}}$} (2, -5.2);

\node(1d) at (-1, -5.2) {$a_1$};
\node(dotsd) at (-1, -5.5) {$\vdots$};
\node(kd) at (-1, -6.0) {$a_k$};
\node(40d) at (0, -5.6) {$S_L$};
\node(41d) at (1, -5.6) {$S_L$};
\node(4dotsd) at (2, -5.6) {$\cdots$};
\node(4n-1d) at (3, -5.6) {$S_L$};
\node(4nd) at (4, -5.6) {$b$};
\node(5d) at (5, -5.2) {$c_1$};
\node(dots2d) at (5, -5.5) {$\vdots$};
\node(ld) at (5, -6.0) {$c_\ell$};

\draw (1d) -- (40d);
\draw (kd) -- (40d);
\draw (40d) -- (41d);
\draw (41d) -- (4dotsd);
\draw (4dotsd) -- (4n-1d);
\draw (4n-1d) -- (4nd);
\draw (4nd) -- (5d);
\draw (4nd) -- (ld);

\draw[->] (2, -6.0) -- node[right]{$s_{x_n}$} (2, -6.6);

\node(1e) at (-1, -6.6) {$a_1$};
\node(dotse) at (-1, -6.9) {$\vdots$};
\node(ke) at (-1, -7.4) {$a_k$};
\node(40e) at (0, -7.0) {$S_L$};
\node(41e) at (1, -7.0) {$S_L$};
\node(4dotse) at (2, -7.0) {$\cdots$};
\node(4n-1e) at (3, -7.0) {$S_L$};
\node(4ne) at (4, -7.0) {$b'$};
\node(5e) at (5, -6.6) {$c_1$};
\node(dots2e) at (5, -6.9) {$\vdots$};
\node(le) at (5, -7.4) {$c_\ell$};

\draw (1e) -- (40e);
\draw (ke) -- (40e);
\draw (40e) -- (41e);
\draw (41e) -- (4dotse);
\draw (4dotse) -- (4n-1e);
\draw (4n-1e) -- (4ne);
\draw (4ne) -- (5e);
\draw (4ne) -- (le);

\draw[->] (2, -7.4) -- node[right]{$s_{x_{n-1}}$} (2, -8.0);

\node(1f) at (-1, -8.0) {$a_1$};
\node(dotsf) at (-1, -8.3) {$\vdots$};
\node(kf) at (-1, -8.8) {$a_k$};
\node(40f) at (0, -8.4) {$S_L$};
\node(41f) at (1, -8.4) {$S_L$};
\node(4dotsf) at (2, -8.4) {$\cdots$};
\node(4n-1f) at (3, -8.4) {$b'$};
\node(4nf) at (4, -8.4) {$b'$};
\node(5f) at (5, -8.0) {$c_1$};
\node(dots2f) at (5, -8.3) {$\vdots$};
\node(lf) at (5, -8.8) {$c_\ell$};

\draw (1f) -- (40f);
\draw (kf) -- (40f);
\draw (40f) -- (41f);
\draw (41f) -- (4dotsf);
\draw (4dotsf) -- (4n-1f);
\draw (4n-1f) -- (4nf);
\draw (4nf) -- (5f);
\draw (4nf) -- (lf);

\draw[->] (2, -8.8) -- node[right]{$s_{x_{n-2}}$} (2, -9.4);

\node at (2, -9.7) {$\vdots$};

\draw[->] (2, -10.2) -- node[right]{$s_{x_0}$} (2, -10.8);

\node(1g) at (-1, -10.8) {$a_1$};
\node(dotsg) at (-1, -11.1) {$\vdots$};
\node(kg) at (-1, -11.6) {$a_k$};
\node(40g) at (0, -11.2) {$b'$};
\node(41g) at (1, -11.2) {$b'$};
\node(4dotsg) at (2, -11.2) {$\cdots$};
\node(4n-1g) at (3, -11.2) {$b'$};
\node(4ng) at (4, -11.2) {$b'$};
\node(5g) at (5, -10.8) {$c_1$};
\node(dots2g) at (5, -11.1) {$\vdots$};
\node(lg) at (5, -11.6) {$c_\ell$};

\draw (1g) -- (40g);
\draw (kg) -- (40g);
\draw (40g) -- (41g);
\draw (41g) -- (4dotsg);
\draw (4dotsg) -- (4n-1g);
\draw (4n-1g) -- (4ng);
\draw (4ng) -- (5g);
\draw (4ng) -- (lg);
\end{tikzpicture}
\caption{The sequence of roots appearing at each step of the expression $\st_n(s_x(\alpha)) = s_{x_0}s_{x_1}\cdots s_{x_n}s_{x_{n-1}}\cdots s_{x_1} s_{x_0}(\st_n(\alpha))$.}
\label{backandforth}
\end{figure}

\begin{itemize}
\item[(1)] If $S_R = b$, then $b' = S_L$. In particular, after we apply $s_{x_n}$ halfway through the chain, we have already reached $\st_n(s_x(\alpha))$ after $n+1$ steps. Since $S_L < b$, at each of those steps we go down in the root poset, so $\st_n(s_x(\alpha)) < \st_n(\alpha)$ and $\depth(\st_n(s_x(\alpha))) = \depth(\st_n(\alpha)) - (n+1)$.

\item[(2)] If $S_R < b$, then $b' < S_L < b$. Thus at each of the $2n + 1$ steps in Figure \ref{backandforth}, a coefficient strictly decreases. Thus $\st_n(s_x(\alpha)) < \st_n(\alpha)$ and $\depth(\st_n(s_x(\alpha))) = \depth(\st_n(\alpha)) - (2n+1)$.

\item[(3)] If $S_R > b$, then $b' > S_L < b$. Thus the first $n+1$ steps in Figure \ref{backandforth} move down in the root poset, while the remaining $n$ move back up. Thus $\depth(\st_n(s_x(\alpha))) = \depth(\st_n(\alpha)) - 1$. In particular, if $\st_n(s_x(\alpha))$ were comparable to $\st_n(\alpha)$, it would be covered by $\st_n(\alpha)$, but since they differ in more than one coefficient this is not possible.
\end{itemize}
\end{proof}

This gives us our first numerical result on stretching a root:

\begin{corollary}
\label{lineardepth}
For any positive root $\alpha$, there exists an integer $t$ such that $\depth(\st_n(\alpha)) = tn + \depth(\alpha)$.
\end{corollary}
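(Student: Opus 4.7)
The plan is to induct on $\depth(\alpha)$, feeding covers $\beta \lessdot \alpha$ into Lemma \ref{stretchedcovers} to track the depth under stretching.

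For the base case, $\alpha$ is a simple root $\alpha_y$. If $y \neq x$, then $\st_n(\alpha) = \alpha_y$ is still simple with depth $1$, so I take $t = 0$. If $\alpha = \alpha_x$, then $\st_n(\alpha_x)$ has value $1$ on every $x_i$ and $0$ elsewhere, and the expression $s_{x_0}s_{x_1}\cdots s_{x_{n-1}}(\alpha_{x_n})$ exhibits a saturated chain of $n$ covers from $\alpha_{x_n}$ up to $\st_n(\alpha_x)$ (each step adds $\alpha_{x_k}$ to an otherwise-zero coefficient, which is a cover). Since the root poset is graded by depth, this gives $\depth(\st_n(\alpha_x)) = n + 1$, so $t = 1$ works.

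For the inductive step with $\depth(\alpha) > 1$, I pick any cover $\beta \lessdot \alpha$ in the root poset of $\Phi$, writing $\alpha = s_y(\beta)$. By induction there is an integer $t'$ with $\depth(\st_n(\beta)) = t'n + \depth(\beta)$. If $y \neq x$, then $\st_n(\alpha) = s_y(\st_n(\beta))$ and the difference $\st_n(\alpha) - \st_n(\beta)$ is the same positive multiple of $\alpha_y$ as $\alpha - \beta$, so $\st_n(\beta) \lessdot \st_n(\alpha)$ is again a cover and $t = t'$ works. If $y = x$, the pair $(\beta, \alpha) = (s_x(\alpha), \alpha)$ fits the hypotheses of Lemma \ref{stretchedcovers}, whose three cases give $\depth(\st_n(\alpha)) - \depth(\st_n(\beta)) = n+1$, $2n+1$, or $1$, producing $t = t' + 1$, $t' + 2$, or $t'$ respectively.

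The step I expect to look most delicate is Case 3 of Lemma \ref{stretchedcovers}, since there the cover relation downstairs actually fails upstairs: $\st_n(\beta)$ and $\st_n(\alpha)$ become incomparable after stretching, so one cannot simply push the chain in $\Phi$ forward to a chain in $\st_n(\Phi)$. Fortunately the lemma hands us the exact depth difference of $1$ for free, so the induction proceeds without needing to track comparability or find a replacement chain. In every case we arrive at $\depth(\st_n(\alpha)) = tn + \depth(\alpha)$ for an integer $t$, as required.
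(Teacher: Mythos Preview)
Your proof is correct and follows essentially the same route as the paper: induction on $\depth(\alpha)$, with the base case split into $\alpha_y$ for $y\neq x$ versus $\alpha_x$, and the inductive step handled by preserving the cover when $y\neq x$ and invoking Lemma~\ref{stretchedcovers} when $y=x$. Your remark about Case~3 is apt and matches the paper's implicit reliance on the lemma's depth statement rather than comparability.
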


\begin{proof}
By induction on $\on{depth}(\alpha)$. For any simple root based away from vertex $x$, the stretched depth is 1, while for the simple root $\alpha_x$ it is $n+1$. 

Then suppose we have a cover $s_z(\alpha) \lessdot \alpha$ in the root poset, such that $\on{depth}(\st_n(s_z(\alpha))) = t'n + \depth(s_z(\alpha))$. If $z\neq x$, then $\st_n(s_z(\alpha)) \lessdot \st_n(\alpha)$ is still a cover, and so 
\[
\depth(\st_n(\alpha)) = t'n + \depth(s_z(\alpha)) + 1 = t'n + \depth(\alpha).
\]

If $z = x$, then Lemma \ref{stretchedcovers} implies
\[
\depth(\st_n(\alpha)) = t'n + \depth(s_x(\alpha)) + cn + 1 = (t' + c)n + \depth(\alpha),
\] 
where $c = 0$, $1$, or $2$.
\end{proof}

\begin{definition}
\label{depthgrowthrate}
The \textbf{depth growth rate} of $\alpha$ is this integer $t$. 
\end{definition}

We end this section by looking in more detail at examples of stretching the roots in a cover relation.

In case (1), since $\st_n(\alpha)$ and $\st_n(s_x(\alpha))$ are comparable, we can consider the interval between them. In this case it is just the chain forming the top half of Figure \ref{backandforth}: each reflection in that chain is the only one we can make while decreasing a coefficient on the stretched path.

In case (2), on the other hand, there are other ways of getting from $\st_n(\alpha)$ down to $\st_n(s_x(\alpha))$. Figure \ref{stretchedinterval} shows a cover relation exhibiting case (2), together with the interval between their $3$-stretched versions. This reveals a bit of type $A$ behavior: the interval consists of two copies of the $A_n$ root poset, one inverted.

In case (3), although $\st_n(\alpha)$ and $\st_n(s_x(\alpha))$ are incomparable, we can situate them in what is essentially a sideways version of the interval in Figure \ref{stretchedinterval}, accounting for the fact that reflections which go down in the root poset in case (2) may go up in case (3). This is illustrated in Figure \ref{stretchednoninterval}.

In all cases, we see the effect of stretching on depth stated in Lemma \ref{stretchedcovers}.
\begin{figure}
\centering

\begin{minipage}{0.4\textwidth}
\centering
\begin{tikzpicture}
\node(10) at (-1, 0.5) {$1$};
\node(20) at (-1, 0) {$1$};
\node(30) at (-1, -0.5) {$1$};
\node(40) at (0, 0) {$4$};
\node(50) at (1, 0.5) {$1$};
\node(60) at (1, -0.5) {$1$};

\draw (10) -- (40);
\draw (20) -- (40);
\draw (30) -- (40);
\draw (40) -- (50);
\draw (40) -- (60);

\draw[->] (0, -0.5) -- (0, -1.5);

\node(11) at (-1, -1.5) {$1$};
\node(21) at (-1, -2) {$1$};
\node(31) at (-1, -2.5) {$1$};
\node(41) at (0, -2) {$1$};
\node(51) at (1, -1.5) {$1$};
\node(61) at (1, -2.5) {$1$};

\draw (11) -- (41);
\draw (21) -- (41);
\draw (31) -- (41);
\draw (41) -- (51);
\draw (41) -- (61);
\end{tikzpicture}
\end{minipage}
\begin{minipage}{0.4\textwidth}
\centering

\begin{tikzpicture}[scale=0.75]
\node[draw=black](4444) at (0, 4) {$4444$};
\node(3444) at (-1, 3) {$3444$};
\node(4442) at (1, 3) {$4442$};
\node(3344) at (-2, 2) {$3344$};
\node(3442) at (0, 2) {$3442$};
\node(4422) at (2, 2) {$4422$};
\node(3334) at (-3, 1) {$3334$};
\node(3342) at (-1, 1) {$3342$};
\node(3422) at (1, 1) {$3422$};
\node(4222) at (3, 1) {$4222$};
\node(3331) at (-3, 0) {$3331$};
\node(3312) at (-1, 0) {$3312$};
\node(3122) at (1, 0) {$3122$};
\node(1222) at (3, 0) {$1222$}; 
\node(3311) at (-2, -1) {$3311$};
\node(3112) at (0, -1) {$3112$};
\node(1122) at (2, -1) {$1122$};
\node(3111) at (-1, -2) {$3111$};
\node(1112) at (1, -2) {$1112$};
\node[draw=black](1111) at (0, -3) {$1111$};

\draw (4444) -- (3444) -- (3344) -- (3334) -- (3331) -- (3311) -- (3111) -- (1111);
\draw (4442) -- (3442) -- (3342) -- (3312) -- (3112) -- (1112);
\draw (4422) -- (3422) -- (3122) -- (1122) ;
\draw (4222) -- (1222);
\draw (4444) -- (4442) -- (4422) -- (4222);
\draw (1222) -- (1122) -- (1112) -- (1111);
\draw (3444) -- (3442) -- (3422);
\draw (3122) -- (3112) -- (3111);
\draw (3344) -- (3342);
\draw (3312) -- (3311);
\end{tikzpicture}
\end{minipage}
\caption{A cover exhibiting case (2) and the interval between the $3$-stretched roots. To save space, roots on the right are represented by their values on the stretched path.}
\label{stretchedinterval}
\end{figure}

\begin{figure}
\centering

\begin{minipage}{0.4\textwidth}
\centering
\begin{tikzpicture}
\node(10) at (-1, 0.5) {$1$};
\node(20) at (-1, 0) {$1$};
\node(30) at (-1, -0.5) {$1$};
\node(40) at (0, 0) {$5$};
\node(50) at (1, 0.5) {$3$};
\node(60) at (1, -0.5) {$3$};

\draw (10) -- (40);
\draw (20) -- (40);
\draw (30) -- (40);
\draw (40) -- (50);
\draw (40) -- (60);

\draw[->] (0, -0.5) -- (0, -1.5);

\node(11) at (-1, -1.5) {$1$};
\node(21) at (-1, -2) {$1$};
\node(31) at (-1, -2.5) {$1$};
\node(41) at (0, -2) {$4$};
\node(51) at (1, -1.5) {$3$};
\node(61) at (1, -2.5) {$3$};

\draw (11) -- (41);
\draw (21) -- (41);
\draw (31) -- (41);
\draw (41) -- (51);
\draw (41) -- (61);
\end{tikzpicture}
\end{minipage}
\begin{minipage}{0.4\textwidth}
\centering
\begin{tikzpicture}[scale=0.75]
\node[draw=black](5555) at (-3, 1) {$5555$};
\node(5556) at (-2, 2) {$5556$};
\node(3555) at (-2, 0) {$3555$};
\node(5566) at (-1, 3) {$5566$};
\node(3556) at (-1, 1) {$3556$};
\node(3355) at (-1, -1) {$3355$};
\node(5666) at (0, 4) {$5666$};
\node(3566) at (0, 2) {$3566$};
\node(3356) at (0, 0) {$3356$};
\node(3335) at (0, -2) {$3335$};
\node(4666) at (1, 3) {$4666$};
\node(3466) at (1, 1) {$3466$};
\node(3346) at (1, -1) {$3346$};
\node(3334) at (1, -3) {$3334$};
\node(4466) at (2, 2) {$4466$};
\node(3446) at (2, 0) {$3446$};
\node(3344) at (2, -2) {$3344$};
\node(4446) at (3, 1) {$4446$};
\node(3444) at (3, -1) {$3444$};
\node[draw=black](4444) at (4, 0) {$4444$};

\draw (5555) -- (5556) -- (5566) -- (5666) -- (4666) -- (4466) -- (4446) -- (4444);
\draw (3555) -- (3556) -- (3566) -- (3466) -- (3446) -- (3444);
\draw (3355) -- (3356) -- (3346) -- (3344);
\draw (3335) -- (3334);
\draw (5555) -- (3555) -- (3355) -- (3335);
\draw (5556) -- (3556) -- (3356);
\draw (5566) -- (3566);
\draw (4466) -- (3466);
\draw (4446) -- (3446) -- (3346);
\draw (4444) -- (3444) -- (3344) -- (3334);
\end{tikzpicture}
\end{minipage}

\caption{A cover exhibiting case (3) and the analogue of the interval shown in Figure \ref{stretchedinterval}.}
\label{stretchednoninterval}
\end{figure}
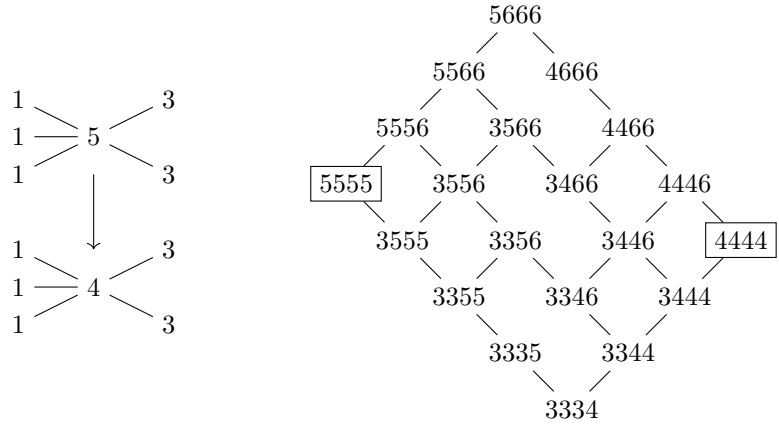

\section{Downsets in stretched root posets}
\label{downsets}
For a positive root $\alpha$, the \textbf{downset generated by $\alpha$} is
\[
\downset\alpha := \{\beta\in \Phi^+\mid \beta \leq \alpha\}
\]. 
This section will prove the following result:

\begin{theorem}
Let $\alpha$ be a positive root. Then there is a polynomial $p(n)$ such that $|\downset\st_n(\alpha)| = p(n)$ for sufficiently large $n$. 
\end{theorem}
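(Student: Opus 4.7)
The plan is to reduce to Theorem \ref{stableposet} and then show that each stretching class contributes a polynomial-in-$n$ count of roots to $\downset\st_n(\alpha)$.

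By Theorem \ref{stableposet}, for $n$ sufficiently large there is a finite collection $\mathcal{S}$ of stretching classes such that $\downset\st_n(\alpha)$ is exactly the set of positive roots of $\st_n(\Phi)$ lying in some class $C \in \mathcal{S}$. Fix such a $C$. According to the description of stretching classes given in the introduction, $C$ specifies fixed values off the stretched path and at some $e$ vertices at its ends, plus a sequence $v_1, \ldots, v_k$ of values that must appear in order, each some positive number of times, on the remaining vertices of the stretched path. Since $\st_n(G)$ has $n+1$ vertices on the stretched path, a root in $C \cap \st_n(\Phi)$ corresponds bijectively to a choice of multiplicities $m_1, \ldots, m_k \geq 1$ with $m_1 + \cdots + m_k = n+1-e$. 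The number of such compositions is $\binom{n-e}{k-1}$, a polynomial in $n$ of degree $k-1$, valid whenever $n \geq e + k - 1$.

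Summing these contributions over the finite set $\mathcal{S}$ then expresses $|\downset\st_n(\alpha)|$ as a finite sum of polynomials in $n$, hence as a polynomial in $n$, for $n$ large enough that all the binomial formulas are simultaneously valid.

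The main subtlety I anticipate is verifying that distinct classes in $\mathcal{S}$ correspond to disjoint subsets of roots, so that the sum is direct rather than requiring inclusion-exclusion. This should follow from Definition \ref{stretchingclass} provided that consecutive values $v_i, v_{i+1}$ in the prescribed sequence are required to differ, so that each root has an unambiguous ``run decomposition'' along the stretched path and therefore lies in at most one class. In the event that overlaps do occur, inclusion-exclusion among stretching classes still yields a polynomial, since intersections of stretching classes are themselves parameterized by (possibly refined) compositions and hence contribute polynomial counts. Either way, the conclusion follows once the bookkeeping of fixed-vertex data and block-sequence data for each class is spelled out.
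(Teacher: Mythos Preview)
Your approach is essentially the paper's: invoke Theorem \ref{stableposet}, count the elements of each stretching class by a binomial coefficient, and combine. The paper does exactly this, with inclusion--exclusion.

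One correction: your hoped-for disjointness does \emph{not} hold, even with the requirement that consecutive asterisked values differ. The reason is that two stretching classes can differ in how many vertices at the ends of the stretched path carry \emph{fixed} (non-asterisked) values. For instance, a class with the pattern $3,\,5^*$ (the leftmost vertex fixed at $3$, the rest asterisked $5$'s) and a class with the pattern $3^*,\,5^*$ both contain the root whose path values are $3,5,5,\ldots,5$. The run decomposition of a root is indeed unique, but it does not determine which runs are ``fixed'' versus ``asterisked'' in a given class. The paper notes this explicitly (``stretching classes are not equivalence classes, since they can intersect nontrivially'').

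Your fallback to inclusion--exclusion is therefore the real argument, and this is what the paper does. To make it rigorous you need to know that the intersection of two stretching classes again has polynomial-in-$n$ size; the paper secures this by proving a short lemma that any such intersection is empty, a single root, or again a stretching class. Your phrase ``parameterized by (possibly refined) compositions'' is pointing at the right idea but would need to be made precise along these lines.
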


We show this using a stability result, by constructing a single finite structure which gives $\downset\st_n(\alpha)$ for all sufficiently large $n$.

To motivate the kind of stability we will use, consider the finite-type case of $D_n$. In this case, the root poset has a unique maximal root, obtained by stretching the maximal root for $D_4$:
\begin{center}
\begin{tikzpicture}
\node(1) at (-1, 0.3) {$1$};
\node(2) at (-1, -0.3) {$1$};
\node(3) at (0, 0) {$2$};
\node(dots) at (1, 0) {$\cdots$};
\node(n-1) at (2, 0) {$2$};
\node(n) at (3, 0) {$1$};

\draw (1) -- (3) -- (dots) -- (n-1) -- (n);
\draw (2) -- (3);
\end{tikzpicture}
\end{center}
Thus the entire set of positive roots for $D_n$ is of the form $\downset\st_{n-4}(\alpha)$. 

The roots of $D_n$ for all $n$ fit a finite list of patterns. For example, any function of the form
\begin{center}
\begin{tikzpicture}
\node(1) at (-1, 0.3) {$1$};
\node(2) at (-1, -0.3) {$1$};
\node(3) at (0, 0) {$2$};
\node(dots1) at (1, 0) {$\cdots$};
\node(k) at (2, 0) {$2$};
\node(k+1) at (3, 0) {$1$};
\node(dots2) at (4, 0) {$\cdots$};
\node(n-1) at (5, 0) {$1$};
\node(n) at (6, 0) {$1$};

\draw (1) -- (3) -- (dots1) -- (k) -- (k+1) -- (dots2) -- (n-1) -- (n);
\draw (2) -- (3);
\end{tikzpicture}
\end{center}
is a root. We can compactly describe the roots of this form with the notation
\begin{center}
\begin{tikzpicture}
\node(1) at (-1, 0.3) {$1$};
\node(2) at (-1, -0.3) {$1$};
\node(3) at (0, 0) {$2^*$};
\node(4) at (1, 0) {$1^*$};
\node(5) at (2, 0) {$1$};

\draw (1) -- (3) -- (4) -- (5);
\draw (2) -- (3);
\end{tikzpicture}
\end{center}
using an asterisk on a coefficient to mean that it can repeat any nonzero number of times. (We distinguish the rightmost vertex because it is not part of the stretched path.) Then we can write down a list of expressions like this which describe the roots of every $D_n$, shown in Figure \ref{typeDclasses}.

\begin{figure}[h]
\centering

\begin{tabular}{ccc}
\begin{tikzpicture}
\node (1) at (-1, 0.3) {$1$};
\node (2) at (-1, -0.3) {$1$};
\node (3) at (0, 0) {$2^*$};
\node (4) at (1, 0) {$1$};
\draw (1) -- (3) -- (4);
\draw (2) -- (3);
\end{tikzpicture} & 
\begin{tikzpicture}
\node (1) at (-1, 0.3) {$1$};
\node (2) at (-1, -0.3) {$1$};
\node (3) at (0, 0) {$2^*$};
\node (4) at (1, 0) {$1^*$};
\node (5) at (2, 0) {$0/1$};
\draw (1) -- (3) -- (4) -- (5);
\draw (2) -- (3);
\end{tikzpicture} &
\begin{tikzpicture}
\node (1) at (-1, 0.3) {$0/1$};
\node (2) at (-1, -0.3) {$0/1$};
\node (3) at (0, 0) {$1^*$};
\node (4) at (1, 0) {$0/1$};
\draw (1) -- (3) -- (4);
\draw (2) -- (3);
\end{tikzpicture} \\

\begin{tikzpicture}
\node (1) at (-1, 0.3) {$0/1$};
\node (2) at (-1, -0.3) {$0/1$};
\node (3) at (0, 0) {$1^*$};
\node (4) at (1, 0) {$0^*$};
\node (5) at (2, 0) {$0$};
\draw (1) -- (3) -- (4) -- (5);
\draw (2) -- (3);
\end{tikzpicture} & 
\begin{tikzpicture}
\node (1) at (-1, 0.3) {$0$};
\node (2) at (-1, -0.3) {$0$};
\node (3) at (0, 0) {$0^*$};
\node (4) at (1, 0) {$1^*$};
\node (5) at (2, 0) {$0/1$};
\draw (1) -- (3) -- (4) -- (5);
\draw (2) -- (3);
\end{tikzpicture} & 
\begin{tikzpicture}
\node (1) at (-1, 0.3) {$0$};
\node (2) at (-1, -0.3) {$0$};
\node (3) at (0, 0) {$0^*$};
\node (4) at (1, 0) {$1$};
\draw (1) -- (3) -- (4);
\draw (2) -- (3);
\end{tikzpicture} \\

\begin{tikzpicture}
\node (1) at (-1, 0.3) {$0$};
\node (2) at (-1, -0.3) {$0$};
\node (3) at (0, 0) {$0^*$};
\node (4) at (1, 0) {$1^*$};
\node (5) at (2, 0) {$0^*$};
\node (6) at (3, 0) {$0$};
\draw (1) -- (3) -- (4) -- (5) -- (6);
\draw (2) -- (3);
\end{tikzpicture} &
\begin{tikzpicture}
\node (1) at (-1, 0.3) {$1$};
\node (2) at (-1, -0.3) {$0$};
\node (3) at (0, 0) {$0^*$};
\node (4) at (1, 0) {$0$};
\draw (1) -- (3) -- (4);
\draw (2) -- (3);
\end{tikzpicture} &
\begin{tikzpicture}
\node (1) at (-1, 0.3) {$0$};
\node (2) at (-1, -0.3) {$1$};
\node (3) at (0, 0) {$0^*$};
\node (4) at (1, 0) {$0$};
\draw (1) -- (3) -- (4);
\draw (2) -- (3);
\end{tikzpicture}
\end{tabular}

\caption{The roots of $D_n$ are precisely the functions which fit these patterns.}
\label{typeDclasses}
\end{figure}

In general, however, describing downsets is not quite as simple as allowing values to repeat freely on the stretched path. Consider the root used in Figure \ref{stretchednoninterval}:
\begin{center}
\begin{tikzpicture}
\node(1) at (-1, 0.5) {$1$};
\node(2) at (-1, 0) {$1$};
\node(3) at (-1, -0.5) {$1$};
\node(4) at (0, 0) {$5$};
\node(5) at (1, 0.5) {$3$};
\node(6) at (1, -0.5) {$3$};

\draw (1) -- (4);
\draw (2) -- (4);
\draw (3) -- (4);
\draw (4) -- (5);
\draw (4) -- (6);
\end{tikzpicture}
\end{center}
The downsets generated by its stretches contain roots of the form
\begin{center}
\begin{tikzpicture}
\node(1) at (-1, 0.5) {$1$};
\node(2) at (-1, 0) {$1$};
\node(3) at (-1, -0.5) {$1$};
\node(4) at (0, 0) {$3$};
\node(5) at (1, 0) {$\cdots$};
\node(6) at (2, 0) {$3$};
\node(7) at (3, 0) {$4$};
\node(8) at (4, 0.5) {$3$};
\node(9) at (4, -0.5) {$3$};

\draw (1) -- (4) -- (5) -- (6) -- (7) -- (8);
\draw (7) -- (9);
\draw (2) -- (4);
\draw (3) -- (4);
\end{tikzpicture}
\end{center}
but not those of the form
\begin{center}
\begin{tikzpicture}
\node(1) at (-1, 0.5) {$1$};
\node(2) at (-1, 0) {$1$};
\node(3) at (-1, -0.5) {$1$};
\node(4) at (0, 0) {$3$};
\node(5) at (1, 0) {$\cdots$};
\node(6) at (2, 0) {$3$};
\node(7) at (3, 0) {$4$};
\node(8) at (4, 0) {$4$};
\node(9) at (5, 0.5) {$3$};
\node(10) at (5, -0.5) {$3$};

\draw (1) -- (4) -- (5) -- (6) -- (7) -- (8) -- (9);
\draw (8) -- (10);
\draw (2) -- (4);
\draw (3) -- (4);
\end{tikzpicture}
\end{center}
as suggested by the poset in Figure \ref{stretchednoninterval}.

Thus, to describe downsets as in Figure \ref{typeDclasses}, we need our patterns to allow for some values at the ends of the stretched path to not repeat. 

\begin{definition}
\label{stretchingclass}
Let $\ol{\beta}$ be a integer-valued function on the vertices of some stretch of $G$, together with a marking of some consecutive vertices on the stretched path by asterisks, subject to the constraint that no adjacent asterisked values are the same.

Then the \textbf{stretching class} determined by $\ol{\beta}$ consists of all functions in $\bigsqcup_n \R^{\st_n(G)}$ which assume the non-asterisked values at the prescribed places, and which assume the asterisked values along the stretched path in the prescribed order, each repeated any nonzero number of times.

\end{definition}

We note that Propositions \ref{stretchingroots} and \ref{squishing} imply that if one element of a stretching class is a root, they all are, so we can also think of stretching classes as subsets of $\bigsqcup_n \st_n(\Phi)$.

As we denote individual roots by Greek letters, we denote stretching classes by barred Greek letters, such as $\ol{\beta}$. We denote the set of functions in $\ol{\beta}$ defined on a specific stretch $\st_n(G)$ by $\ol{\beta}[n]$. We emphasize that, despite our name and notation, stretching classes are not equivalence classes, since they can intersect nontrivially.

In what follows we will consider how reflecting a root affects the stretching classes it belongs to. To do this, we need to distinguish whether this reflection is happening on or off the path of repeatable values marked by asterisks, or at the path's ends. 

We will freely talk about the vertices and coefficients of a stretching class, by which we mean the vertices and coefficients in the defining notation. In this language, we define the \textbf{left endpoint} of a stretching class to be the leftmost vertex with an asterisk, and define the \textbf{right endpoint} to be the rightmost such vertex. The \textbf{internal vertices} will be the other vertices with asterisks. The \textbf{class left neighbors} will be the neighbors to the left of the left endpoint, and we define the \textbf{class right neighbors} similarly. (Note that these may be different from $L_x$ and $R_x$, since in general not every vertex on the stretched path will have an asterisk.) We illustrate these terms in Figure \ref{stretchingclassterms}. 

\begin{figure}
\begin{center}
\begin{tikzpicture}
\node(1) at (-1, 0.5) {$3$};
\node(2) at (-1, 0) {$3$};
\node(3) at (-1, -0.5) {$3$};
\node[draw=black](40) at (0, 0) {$9$};
\node[draw=black](41) at (1, 0) {$9^*$};
\node[draw=black](42) at (2, 0) {$7^*$};
\node[draw=black](43) at (3, 0) {$2^*$};
\node(5) at (4, 0.5) {$1$};
\node(6) at (4, -0.5) {$1$};

\node(ln) at (0, 2) {class left neighbor};
\node(le) at (1, -2) {left endpoint};
\node(iv) at (2, 1) {internal vertex};
\node(re) at (3, -1) {right endpoint};
\node(rn) at (4, 2) {class right neighbors};

\draw (3.8, 0.7) rectangle (4.2, -0.7);

\draw (1) -- (40);
\draw (2) -- (40);
\draw (3) -- (40);
\draw (40) -- (41);
\draw (41) -- (42);
\draw (42) -- (43);
\draw (43) -- (5);
\draw (43) -- (6);
\draw[->] (ln) -> (40);
\draw[->] (le) -> (41);
\draw[->] (iv) -> (42);
\draw[->] (re) -> (43);
\draw[->] (rn) -> (5);
\end{tikzpicture}
\end{center}

\caption{An example of the terminology we use with stretching classes.}
\label{stretchingclassterms}
\end{figure}
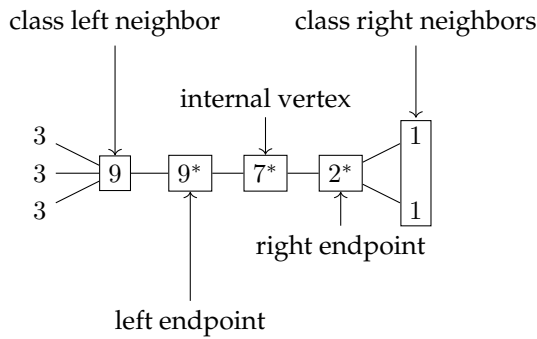

We may also talk about reflecting at a vertex of a stretching class, which amounts to applying the reflection to the defining notation as if it were an ordinary function on $\st_n(G)$, ignoring the asterisks. 

Finally, let $x_\ell$ be the left endpoint and let $y_1,\ldots, y_k$ be the class left neighbors. Then the \textbf{weighted left sum} of $\ol{\beta}$ is $\sum_{i} -A_{x_\ell y_i}\ol{\beta}(y_i)$. Note that this is the quantity $S_L$ from section \ref{stretchingandexpressions} if $x_\ell = x_0$, and $\ol{\beta}(x_{\ell - 1})$ otherwise. We likewise define the \textbf{weighted right sum}.

Now fix a positive root $\alpha$. We iteratively construct a directed graph $P$ whose vertices represent stretching classes and which will, for sufficiently large $n$, describe $\downset\st_n(\alpha)$. First, define a stretching class $\alpha^*$ by placing an asterisk on the value of $\alpha$ at $x$, so that $\alpha^*$ consists of the stretches of $\alpha$. We make $\alpha^*$ a vertex of $P$. Then in each step of constructing $P$, for each of its vertices $\ol{\beta}$, we add arrows $\ol{\beta}\to \ol{\gamma}$, where $\ol{\gamma}$ can be obtained from $\ol{\beta}$ by the following operations:

\begin{itemize}
\item[(1)] Reflect at a vertex without an asterisk, such that its coefficient decreases.
\item[(2)] Reflect at an internal vertex, such that its coefficient decreases.
\item[(3)] If the weighted left (right) sum is less than the coefficient of the left (right) endpoint, insert that sum with an asterisk as the new left (right) endpoint.
\item[(4)] If there is more than one vertex with an asterisk, reflect at the left/right endpoint such that the coefficient there decreases. If the new coefficient on the left/right endpoint is greater than or equal to the asterisked coefficient next to it, remove the asterisk from the left/right endpoint.
\end{itemize}

Figure \ref{stablegraphops} shows examples of these operations.

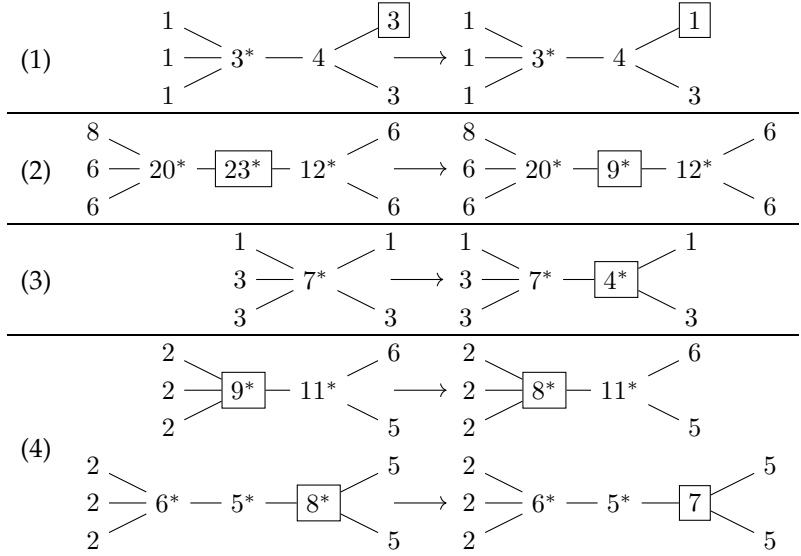
\begin{figure}
\begin{tabular}{lc}
(1) & \begin{tikzpicture}[baseline={([yshift=-.8ex]current bounding box.center)}]
\node(10) at (-1, 0.5) {$1$};
\node(20) at (-1, 0) {$1$};
\node(30) at (-1, -0.5) {$1$};
\node(400) at (0, 0) {$3^*$};
\node(410) at (1, 0) {$4$};
\node[draw=black](50) at (2, 0.5) {$3$};
\node(60) at (2, -0.5) {$3$};

\draw (10) -- (400);
\draw (20) -- (400);
\draw (30) -- (400);
\draw (400) -- (410);
\draw (410) -- (50);
\draw (410) -- (60);

\draw[->] (2, 0) -- (2.7, 0);

\node(11) at (3, 0.5) {$1$};
\node(21) at (3, 0) {$1$};
\node(31) at (3, -0.5) {$1$};
\node(401) at (4, 0) {$3^*$};
\node(411) at (5, 0) {$4$};
\node[draw=black](51) at (6, 0.5) {$1$};
\node(61) at (6, -0.5) {$3$};

\draw (11) -- (401);
\draw (21) -- (401);
\draw (31) -- (401);
\draw (401) -- (411);
\draw (411) -- (51);
\draw (411) -- (61);
\end{tikzpicture}
\\\hline
(2) & \begin{tikzpicture}[baseline={([yshift=-.8ex]current bounding box.center)}]
\node(10) at (-1, 0.5) {$8$};
\node(20) at (-1, 0) {$6$};
\node(30) at (-1, -0.5) {$6$};
\node(400) at (0, 0) {$20^*$};
\node[draw=black](410) at (1, 0) {$23^*$};
\node(420) at (2, 0) {$12^*$};
\node(50) at (3, 0.5) {$6$};
\node(60) at (3, -0.5) {$6$};

\draw (10) -- (400);
\draw (20) -- (400);
\draw (30) -- (400);
\draw (400) -- (410);
\draw (410) -- (420);
\draw (420) -- (50);
\draw (420) -- (60);

\draw[->] (3, 0) -- (3.7, 0);

\node(11) at (4, 0.5) {$8$};
\node(21) at (4, 0) {$6$};
\node(31) at (4, -0.5) {$6$};
\node(401) at (5, 0) {$20^*$};
\node[draw=black](411) at (6, 0) {$9^*$};
\node(421) at (7, 0) {$12^*$};
\node(51) at (8, 0.5) {$6$};
\node(61) at (8, -0.5) {$6$};

\draw (11) -- (401);
\draw (21) -- (401);
\draw (31) -- (401);
\draw (401) -- (411);
\draw (411) -- (421);
\draw (421) -- (51);
\draw (421) -- (61);
\end{tikzpicture}
\\\hline
(3) & \begin{tikzpicture}[baseline={([yshift=-.8ex]current bounding box.center)}]
\node(dummy) at (-2, 0) {};

\node(10) at (-1, 0.5) {$1$};
\node(20) at (-1, 0) {$3$};
\node(30) at (-1, -0.5) {$3$};
\node(400) at (0, 0) {$7^*$};
\node(50) at (1, 0.5) {$1$};
\node(60) at (1, -0.5) {$3$};

\draw (10) -- (400);
\draw (20) -- (400);
\draw (30) -- (400);
\draw (400) -- (50);
\draw (400) -- (60);

\draw[->] (1, 0) -- (1.7, 0);

\node(11) at (2, 0.5) {$1$};
\node(21) at (2, 0) {$3$};
\node(31) at (2, -0.5) {$3$};
\node(401) at (3, 0) {$7^*$};
\node[draw=black](411) at (4, 0) {$4^*$};
\node(51) at (5, 0.5) {$1$};
\node(61) at (5, -0.5) {$3$};

\draw (11) -- (401);
\draw (21) -- (401);
\draw (31) -- (401);
\draw (401) -- (411);
\draw (411) -- (51);
\draw (411) -- (61);
\end{tikzpicture}

\\\hline

(4) & \begin{tikzpicture}[baseline={([yshift=-.8ex]current bounding box.center)}]
\node(10) at (0, 0.5) {$2$};
\node(20) at (0, 0) {$2$};
\node(30) at (0, -0.5) {$2$};
\node[draw=black](400) at (1, 0) {$9^*$};
\node(410) at (2, 0) {$11^*$};
\node(50) at (3, 0.5) {$6$};
\node(60) at (3, -0.5) {$5$};

\draw (10) -- (400);
\draw (20) -- (400);
\draw (30) -- (400);
\draw (400) -- (410);
\draw (410) -- (50);
\draw (410) -- (60);

\draw[->] (3, 0) -- (3.7, 0);

\node(11) at (4, 0.5) {$2$};
\node(21) at (4, 0) {$2$};
\node(31) at (4, -0.5) {$2$};
\node[draw=black](401) at (5, 0) {$8^*$};
\node(411) at (6, 0) {$11^*$};
\node(51) at (7, 0.5) {$6$};
\node(61) at (7, -0.5) {$5$};

\draw (11) -- (401);
\draw (21) -- (401);
\draw (31) -- (401);
\draw (401) -- (411);
\draw (411) -- (51);
\draw (411) -- (61);

\node(10a) at (-1, -1) {$2$};
\node(20a) at (-1, -1.5) {$2$};
\node(30a) at (-1, -2) {$2$};
\node(400a) at (0, -1.5) {$6^*$};
\node(410a) at (1, -1.5) {$5^*$};
\node[draw=black](420a) at (2, -1.5) {$8^*$};
\node(50a) at (3, -1) {$5$};
\node(60a) at (3, -2) {$5$};

\draw (10a) -- (400a);
\draw (20a) -- (400a);
\draw (30a) -- (400a);
\draw (400a) -- (410a);
\draw (410a) -- (420a);
\draw (420a) -- (50a);
\draw (420a) -- (60a);

\draw[->] (3, -1.5) -- (3.7, -1.5);

\node(11a) at (4, -1) {$2$};
\node(21a) at (4, -1.5) {$2$};
\node(31a) at (4, -2) {$2$};
\node(401a) at (5, -1.5) {$6^*$};
\node(411a) at (6, -1.5) {$5^*$};
\node[draw=black](421a) at (7, -1.5) {$7$};
\node(51a) at (8, -1) {$5$};
\node(61a) at (8, -2) {$5$};

\draw (11a) -- (401a);
\draw (21a) -- (401a);
\draw (31a) -- (401a);
\draw (401a) -- (411a);
\draw (411a) -- (421a);
\draw (421a) -- (51a);
\draw (421a) -- (61a);
\end{tikzpicture}
\end{tabular}
\caption{The four operations we can perform on stretching classes corresponding to reflections on their roots.}
\label{stablegraphops}
\end{figure}

In any of these cases, we don't add the arrow if the operation results in a negative coefficient. This allows for the construction of $P$ to eventually stop, and we now show this happens.
\begin{lemma}
$P$ is finite and acyclic.
\end{lemma}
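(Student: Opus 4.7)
The plan is to prove both finiteness and acyclicity together, by first bounding the set of reachable classes and then exhibiting a strictly decreasing monovariant along each arrow.

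I would first observe that each class has finite branching (each of the four operations applies at a finite number of template positions) and that all coefficients in a class reachable from $\alpha^*$ are bounded nonnegative integers, since each operation either decreases a coefficient or inserts an $S_L$ or $S_R$ that is itself bounded in terms of $\alpha$. It then suffices to bound the sizes $p$ and $q$ of the asterisked and non-asterisked portions of the stretched path template. Operation (4) in its asterisk-removing form is the only operation that increases $q$, and it simultaneously decreases $p$, so $p + q$ is controlled by the applications of operation (3), the only operation that can increase $p + q$.

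The main obstacle, then, is bounding operation (3). My key observation is that op (3) at the left requires the weighted left sum $S_L$ to be strictly less than the current left endpoint value, and after the operation the new left endpoint value equals $S_L$. Since $S_L$ is determined entirely by the values at the off-path left neighbors, which op (3) itself leaves unchanged, chaining two applications of op (3) at the left requires $S_L$ to strictly decrease in between --- which in turn requires an op (1) applied to some left-neighbor vertex to lower its value. The off-path coefficients are bounded nonnegative integers, so only finitely many such decreases are possible; a symmetric argument bounds op (3) at the right. This bounds $p + q$, making the set of reachable classes finite.

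For the monovariant, I would take $\mu(\ol{\beta})$ to be the maximum coefficient sum of any realization of $\ol{\beta}$ on $\st_N(G)$, with $N$ chosen larger than the bound on $p + q$ from above. This maximum is achieved by concentrating the excess multiplicity on the maximum asterisked value $b^*$. A case-by-case check shows each operation strictly decreases $\mu$: operations (1), (2), and (4) do so directly because their underlying reflections decrease a coefficient, while operation (3) decreases $\mu$ by $b^* - S_L \geq 1$, since the new max-sum realization still concentrates excess on $b^*$ but replaces one copy of $b^*$ with the newly inserted $S_L < b^*$. Strict monotonicity of $\mu$ implies $P$ is acyclic, completing the argument. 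The most technical step I anticipate is verifying strict decrease under op (4) in the asterisk-removing case, since this restructures the template by converting an endpoint into a fixed position and so affects both the leading rate and the intercept of $\mu$ as a function of $N$.
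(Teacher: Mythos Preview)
Your overall strategy---first bound the state space for finiteness, then exhibit a separate monovariant $\mu$ for acyclicity---differs from the paper's, which handles both at once via a single lexicographic monovariant $(S_L+S_R,\ \text{sum of endpoint values},\ \text{sum of all coefficients})\in\Z_{\geq 0}^3$, well-founded as $\omega^3$. Your $\mu$ does work for acyclicity: all four cases check out, including the asterisk-removing form of operation (4) you flag, since there the template sum strictly drops while $m$ is unchanged and $b^{\max}$ cannot increase. But the finiteness argument that $\mu$ depends on has a genuine gap.

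The problem is the assertion that ``$S_L$ is determined entirely by the values at the off-path left neighbors.'' This holds for $\alpha^*$ and is preserved by operations (1)--(3), but the asterisk-removing form of operation (4) shifts the left endpoint one step rightward, so the new class left neighbor is the \emph{on-path} vertex that just lost its asterisk, and $S_L$ becomes that on-path value. Once this happens your inference ``a strict decrease in $S_L$ forces an op (1) at an off-path left neighbor'' fails: op (4) itself changes $S_L$, and bounding the number of off-path op (1)'s no longer bounds the number of $S_L$-drops, hence no longer bounds the number of op (3)'s.

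The repair is essentially the paper's first coordinate: check directly that $S_L$ is non-increasing under \emph{every} operation. The only nontrivial case is asterisk-removing op (4) on the left, where the new $S_L$ equals the reflected value $v'=S_L^{\text{old}}+w-v$; combining $v'<v$ (the reflection decreased) with $v'\geq w$ (the removal condition) gives $w<v$ and hence $v'<S_L^{\text{old}}$. With $S_L$ a non-increasing nonnegative integer, your observation that it must strictly drop between consecutive left-side applications of op (3) then goes through and bounds the number of op (3)'s. So your approach can be completed, but once repaired it effectively rediscovers the paper's key monovariant as a lemma on the way to a longer argument.
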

\begin{proof}
We track the following tuple of numbers associated to a stretching class in lexicographic order, from most to least significant:
\begin{itemize}
\item The sum of the weighted left and right sums.
\item The sum of the coefficients at the left and right endpoints.
\item The sum of all the coefficients.
\end{itemize}
We check that the operations used to define $P$ can only decrease this tuple.
\begin{itemize}
\item[(1,2)] Operations 1 and 2 don't lengthen the diagram, and they decrease one of the coefficients. Then either the first quantity decreases, or the first two stay the same while the third decreases.
\item[(3)] This operation leaves the class left/right neighbors untouched, but decreases the coefficient at either the left or right endpoint, so it keeps the first quantity the same while decreasing the second.
\item[(4)] If the reflected coefficient on the left endpoint is greater than or equal to the asterisked coefficient next to it, then it must also be less than the weighted left sum, or else the reflection would not have decreased it. By removing the left endpoint's asterisk, we make it the sole class left neighbor, and so we have decreased the weighted left sum.

On the other hand, if we don't remove the left endpoint's asterisk, then the sum of the weighted left and right sums stays the same while the sum of the left and right endpoints' coefficients decreases. 
\end{itemize}

Thus $P$ has no oriented cycles and (since we require every vertex to have all nonnegative coefficients) no infinite paths. Since each vertex has only finitely many arrows emanating from it, we also know $P$ is finite.
\end{proof}

Now we show the main result of this section.

\begin{theorem}
\label{stableposet}
Let the graph $P$ be constructed from a root $\alpha$ as above. Let $n_0$ be the smallest value such that every stretching class in $P$ with a single asterisk has an element defined on $\st_{n_0}(G)$. Then for $n\geq n_0 + 1$,
\[
\downset\st_n(\alpha) = \bigcup_{\ol{\beta}\in P} \ol{\beta}[n]
\]
\end{theorem}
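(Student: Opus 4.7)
My plan is to prove both inclusions separately: first the easy direction $\bigcup_{\ol\beta \in P} \ol\beta[n] \subseteq \downset \st_n(\alpha)$, then the harder direction $\downset \st_n(\alpha) \subseteq \bigcup_{\ol\beta \in P} \ol\beta[n]$.

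For the easy direction, I would induct on the graph distance from $\alpha^*$ in $P$. The base case is $\alpha^*[n] = \{\st_n(\alpha)\}$, which is trivially in the downset. For the inductive step, it suffices to show that whenever $\ol\beta \to \ol\gamma$ is an arrow in $P$ and $\delta \in \ol\beta[n]$, the corresponding element $\delta' \in \ol\gamma[n]$ obtained by applying the same operation to $\delta$ lies below $\delta$ in the root poset. Operations 1, 2, and 4 are literal cover relations at vertices where a coefficient decreases. Operation 3 amounts to walking down through a stretched reflection: inserting a new asterisked endpoint whose value is the weighted sum of the adjacent coefficients is exactly the pattern of coefficient changes in Case 1 or Case 2 of Lemma \ref{stretchedcovers}, which gives a descending chain in the root poset.

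For the hard direction, I would induct on the depth drop from $\st_n(\alpha)$. Given $\gamma \in \downset \st_n(\alpha)$ with $\gamma \lessdot \delta$ and $\delta \in \ol\beta[n]$ for some $\ol\beta \in P$ (by induction), I would case-split on where the reflecting vertex $v$ of the cover sits relative to $\ol\beta$'s structure. If $v$ lies off the stretched path entirely or at a non-asterisked vertex, the reflection corresponds to operation 1 and the target class is obvious. If $v$ is an internal asterisked vertex, all internal copies of that coefficient are reflected simultaneously and this is operation 2. If $v$ is an endpoint, it is operation 4 (possibly with asterisk removal when the new coefficient meets or exceeds its asterisked neighbor). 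The delicate case is when $v$ lies on the stretched path between the left/right endpoint of $\ol\beta$ and a class left/right neighbor (i.e.\ at a non-asterisked stretched-path vertex): here the reflection produces a new, smaller asterisked endpoint value, and I must verify that the inequality required by operation 3 (weighted sum less than endpoint) is exactly what is forced by the descent. Throughout, I need to confirm that $\gamma$ actually belongs to the defining pattern of the target class, which boils down to checking that all the coefficient repetitions and non-repetition constraints implicit in the stretching-class notation are preserved.

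The main obstacle will be ensuring that no exotic new stretching classes are required beyond those generated by $P$. This is where the hypothesis $n \geq n_0 + 1$ enters: to justify the endpoint-insertion step (operation 3) we need at least one available non-asterisked vertex on the stretched path outside the current asterisked region, and to justify the asterisk-removal step in operation 4 we may need an additional slack vertex so that the reshuffled pattern still fits in $\st_n(G)$. Concretely, the worst case is a single-asterisked class in $P$ that just barely fits in $\st_{n_0}(G)$; for such a class, one additional vertex of slack guarantees we can carry out operation 3 or 4 without running out of room on the stretched path. A parallel subtlety is to check that when operation 4 causes an asterisk removal, the resulting root indeed still fits the defining pattern of the new class — this is where the constraint in Definition \ref{stretchingclass} that no two adjacent asterisked values may be equal pays off, because it means asterisk removal is a well-defined, reversible bookkeeping step rather than a genuine loss of information.
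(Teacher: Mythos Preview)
Your two-inclusion, inductive framework is the same as the paper's, but the case analysis in your hard direction has a genuine gap. A cover $\gamma \lessdot \delta$ is a \emph{single} simple reflection, so ``all internal copies of that coefficient are reflected simultaneously'' cannot be what happens. The organizing distinction you are missing is whether the reflected asterisked value is \emph{alone} (the only copy of that value in $\delta$) or not. When you reflect at an asterisked vertex whose value $b$ is \emph{not} alone, the effect on the stretched path is $\ldots,a,b,b,\ldots \mapsto \ldots,a,a,b,\ldots$: no new value appears, so $\gamma$ lies in the \emph{same} class $\ol\beta$. This ``Case~0'' is absent from your list, and without it the induction collapses, since most covers along the stretched path correspond to no arrow of $P$ at all. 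The alone/not-alone split also repairs your identification of operations~3 and~4: operation~3 is triggered by reflecting at the left or right endpoint when its value is \emph{not} alone (the outermost copy of $b$ becomes the weighted sum $S$, and $\gamma$ lands in the class with $S^*$ inserted as the new endpoint), while operation~4 is the endpoint-is-alone case. Your proposed trigger for operation~3, a reflection at a non-asterisked stretched-path vertex, is just operation~1. With this reorganization the role of $n\ge n_0+1$ becomes exact rather than a vague ``slack'' argument: it forces any element of a single-asterisk class to have at least two copies of the asterisked value, so the endpoint is never alone in such a class and you are never forced to apply operation~4 where it is undefined.

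Your easy direction also needs more than you have written. Given an arrow $\ol\beta\to\ol\gamma$ and an element $\delta'\in\ol\gamma[n]$, there is in general no single cover taking $\delta'$ up into $\ol\beta[n]$: for operations~2,~3,~4 the relevant value may occur many times in $\delta'$, and you must first walk \emph{up} within $\ol\gamma$ (using the Case~0 moves above, in reverse) to reduce to a representative where that value is alone, and only then does one reflection reach $\ol\beta[n]$. So ``operations 1, 2, and 4 are literal cover relations'' is true at the level of class notation but not at the level of roots, and the argument needs the intermediate chain.
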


We present each direction of containment as a separate lemma.

\begin{lemma}
Let $\alpha, P, n_0$ be as above, and $n \geq n_0 + 1$. Then
\[
\downset\st_n(\alpha) \subset \bigcup_{\ol{\beta}\in P} \ol{\beta}[n]
\]
\end{lemma}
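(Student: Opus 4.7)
The plan is to induct on the depth difference $\depth \st_n(\alpha) - \depth(\gamma)$ for $\gamma \in \downset \st_n(\alpha)$. The base case $\gamma = \st_n(\alpha)$ is immediate, since $\alpha^*\in P$ by construction and $\st_n(\alpha)\in \alpha^*[n]$. For the inductive step, pick a cover $\gamma\lessdot \gamma^+$ with $\gamma^+\in \downset\st_n(\alpha)$, so by hypothesis $\gamma^+\in \ol{\beta}[n]$ for some $\ol{\beta}\in P$. Writing $\gamma = s_y(\gamma^+)$ where reflection at vertex $y$ strictly decreases the coefficient there, I do a case analysis on the position of $y$ relative to $\ol{\beta}$ and identify, for each case, which of the operations (1)--(4) in the construction of $P$ produces the stretching class containing $\gamma$ from $\ol{\beta}$.

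The cases fall out as follows. If $y$ lies off the stretched path or at a non-asterisked fixed position of $\ol{\beta}$, operation (1) matches directly. If $y$ is at an asterisked position strictly interior to a run of identical values, the reflection is trivial, contradicting $\gamma < \gamma^+$. If $y$ is at the first or last vertex of a length-$\geq 2$ asterisked run that is not the leftmost or rightmost asterisked position of $\ol{\beta}$, the reflection only shifts a run boundary and the class is unchanged. If $y$ is at an internal asterisked position whose run has length $1$, operation (2) matches. If $y$ is at the leftmost (respectively, rightmost) asterisked position of $\ol{\beta}$ with run length $\geq 2$, the new value is the corresponding weighted sum and operation (3) matches. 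If $y$ is at the leftmost or rightmost asterisked position with run length $1$ and $\ol{\beta}$ has at least two asterisks, operation (4) matches, with the asterisk removed exactly when the new coefficient meets the condition in that operation. The only remaining configuration --- $y$ being the unique asterisk of $\ol{\beta}$ with run length $1$ in $\gamma^+$ --- is ruled out by the hypothesis $n\geq n_0+1$: by definition of $n_0$ every single-asterisk class in $P$ already has an element on $\st_{n_0}(G)$, so on $\st_n(G)$ its sole asterisked value must repeat at least twice in any element of $\ol{\beta}[n]$.

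The main obstacle is the bookkeeping in the case analysis: one must carefully verify in each sub-case that the stretching class of $\gamma$ coincides with the one produced by the matching operation, and that the operation's preconditions are met. Positivity of all new coefficients is automatic since $\gamma$ is a positive root; strict decrease is given by the cover relation; and the structural requirement that adjacent asterisked values remain distinct after operations (2) and (4) follows directly from the reflection formulas, using that the asterisked values of $\ol{\beta}$ were already pairwise distinct when adjacent. Once these checks are made, we obtain $\ol{\beta}'\in P$ with $\gamma\in \ol{\beta}'[n]$, closing the induction.
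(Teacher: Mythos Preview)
Your proposal is correct and takes essentially the same approach as the paper: both argue by descending through covers in the root poset and perform the same case analysis on where the reflection vertex $y$ sits relative to the asterisked positions of the chosen stretching class, matching each sub-case to one of the operations (1)--(4) or to the class itself. Your write-up is slightly more granular --- you explicitly separate the ``reflection is trivial'' and ``run-boundary shift'' sub-cases that the paper bundles together as its case (0), and you spell out the positivity and distinct-adjacent-values checks --- but the logical content is identical.
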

\begin{proof}

Certainly $\st_n(\alpha)$ is in the latter set for any $n$. We then show that, for any cover relation $\delta \lessdot \gamma$ in the root poset, if $\gamma\in \bigcup_{\ol{\beta}\in P} \ol{\beta}[n]$, then so is $\delta$. Let $\ol{\gamma}$ be a stretching class in $P$ which contains $\gamma$. We claim that $\delta$ belongs to a stretching class obtained by applying to $\ol{\gamma}$ one of the operations used to define $P$, or to $\ol{\gamma}$ itself. 

We proceed by cases. Say that a coefficient of $\gamma$ is \textbf{repeatable} if it is represented by an asterisked vertex in $\ol{\gamma}$, and say that a repeatable coefficient is \textbf{alone} if it is the only coefficient of $\gamma$ represented by that vertex (so that both of its neighbors are different).
\begin{itemize}
\item[(0)] If $\delta$ is obtained from $\gamma$ by reflecting at a repeatable coefficient which is not alone, and it is not the furthest left or furthest right repeatable coefficient, then the reflection there changes the quantities of repeated coefficients but not which ones appear:

\begin{center}
\begin{tikzpicture}
\node (00) at (-2, 0) {$\cdots$};
\node (10) at (-1, 0) {$a$};
\node (20) at (0, 0) {$b$};
\node (30) at (1, 0) {$b$};
\node (40) at (2, 0) {$\cdots$};
\draw 	 (00) -- (10) -- (20) -- (30) -- (40);

\draw[->] (0, -0.5) -- (0, -1);

\node (01) at (-2, -1.5) {$\cdots$};
\node (11) at (-1, -1.5) {$a$};
\node (21) at (0, -1.5) {$a$};
\node (31) at (1, -1.5) {$b$};
\node (41) at (2, -1.5) {$\cdots$};
\draw 	 (01) -- (11) -- (21) -- (31) -- (41);
\end{tikzpicture}
\end{center}
Thus $\delta$ is also in $\ol{\gamma}$.

\item[(1)] If $\delta$ is obtained from $\gamma$ by reflecting at a non-repeatable coefficient, then $\delta$ belongs to a stretching class obtained by applying operation 1 above. 

\item[(2)] If we reflect at an alone coefficient other than the furthest left or furthest right repeatable coefficient, then $\delta$ lies in the stretching class obtained by applying operation 2 above.

\item[(3)] If we reflect at the furthest left or furthest right repeatable coefficient and it is not alone, then $\delta$ lies in the stretching class obtained by applying operation 3.

\item[(4)] If we reflect at the furthest left or furthest right repeatable coefficent and it is alone, then because $n\geq n_0 + 1$, $\ol{\gamma}$ must have more than one coefficient with an asterisk. Then $\delta$ lies in the stretching class obtained by applying operation 4.
\end{itemize}

\end{proof}

\begin{lemma} Let $\alpha, P$ be as above. Then
\[
\bigcup_{\ol{\beta}\in P} \ol{\beta}[n] \subset \downset\st_n(\alpha)
\]
\end{lemma}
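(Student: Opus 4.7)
The plan is to induct on a topological ordering of the vertices of $P$, which exists since $P$ is finite and acyclic. The base case is that $\st_n(\alpha) \in \alpha^*[n]$ lies in $\downset\st_n(\alpha)$ trivially. The inductive step reduces to the following key claim: for each arrow $\ol{\beta} \to \ol{\gamma}$ in $P$ and each $\gamma \in \ol{\gamma}[n]$ with $n \geq n_0 + 1$, there exists some $\beta \in \ol{\beta}[n]$ with $\gamma \leq \beta$ in the root poset. Combining this with the inductive hypothesis that $\ol{\beta}[n] \subseteq \downset\st_n(\alpha)$ gives $\gamma \leq \beta \leq \st_n(\alpha)$, closing the induction.

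The proof of the key claim is a case analysis over the four operations defining the arrows of $P$. For operation (1), reflecting at a non-asterisked vertex preserves all block sizes; I take $\beta$ matching $\gamma$ block-for-block and differing only in the single reflected coefficient, giving $\gamma \lessdot \beta$ as a single cover. For operation (2), reflecting at an internal asterisked vertex changes a block value from $b$ to $a + c - b$ (where $a, c$ are the adjacent asterisked values); I lift this to a sweep of reflections that converts a whole block of $b$'s into $(a+c-b)$'s, with the sweep direction dictated by which of $a$ and $c$ exceeds $a+c-b$, and with $\beta$'s block sizes chosen so the chain terminates exactly at $\gamma$. Each step in the sweep peels one value off the $a$-block or $c$-block and is verified to be a cover going down using the signs from the case split.

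For operation (3), which inserts a new leftmost asterisk of value $S_L$ before $\ol{\beta}$'s left endpoint of value $a$, I construct $\beta \in \ol{\beta}[n]$ from $\gamma$ by absorbing $\gamma$'s leftmost $S_L$-block (of size $p$) into $\beta$'s $a$-block; then a left-sweep of $p$ reflections at positions $x_\ell, x_{\ell+1}, \ldots, x_{\ell+p-1}$, analogous to the descent in Lemma \ref{stretchedcovers} case (2), produces $\gamma$, with each step a cover since $S_L < a$ by the operation's hypothesis. For operation (4), I split on whether the asterisk on the new left endpoint value $a' = S_L + b - a$ is removed (sub-case 4a, when $a' \geq b$) or retained (sub-case 4b, when $a' < b$). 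In sub-case 4a, a single reflection at the left endpoint of $\beta$ suffices to convert $a$ to $a'$. In sub-case 4b, I take $\beta$ with a single $a$ followed by $p' + q' - 1$ copies of $b$ (where $p', q'$ are $\gamma$'s block sizes for $a'$ and $b$), and a sweep of $p'$ reflections converts each leading position to $a'$, each step a cover since $a' < a$ and $a' < b$.

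The main obstacle will be the case analysis in operation (2), where the sweep direction and the block sizes for $\beta$ must be tailored to the relative ordering of $a$, $b$, and $c$; the uniform intuition of ``one reflection per arrow of $P$'' fails here, and a multi-step sweep reminiscent of the interval structures in Figures \ref{stretchedinterval} and \ref{stretchednoninterval} is needed instead. Additional bookkeeping will be required to ensure every $\beta$ I construct genuinely lies in $\ol{\beta}[n]$—in particular, that each asterisked block has multiplicity at least one—and to handle edge cases where $\ol{\beta}$ has only a single asterisked vertex (so that its left and right endpoints coincide) or where an asterisked block abuts external vertices, so that the neighbor values in the reflection computations match the prescribed weighted sums rather than another asterisked value.
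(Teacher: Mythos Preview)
Your proposal is correct and follows the paper's approach: induct along the arrows of $P$, and for each of the four operations exhibit a chain of covers between the given $\gamma$ and a suitably chosen $\beta \in \ol{\beta}[n]$, using a sweep of reflections for operations (2)--(4). One small wording point: in operation (2) the correct $\beta$ carries only a \emph{single} copy of the old value $b$ (with the extra length absorbed into the $a$- or $c$-block), so the sweep does not literally ``convert a whole block of $b$'s'' but rather turns the lone $b$ into $a+c-b$ and then peels neighboring values into the new block---your remarks about choosing $\beta$'s block sizes and peeling from the $a$- or $c$-block show you have the right chain in mind, and note that the paper observes this direction holds for all $n$, so your restriction $n \geq n_0 + 1$ is unnecessary.
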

In particular, this second containment is true for all $n$.
\begin{proof}
We know that $\st_n(\alpha)$ is the sole member of $\alpha^*[n]$, and it is in $\downset\st_n(\alpha)$. Then we will show that, for each arrow $\ol{\gamma}\to \ol{\delta}$ of $P$ and $\delta\in \ol{\delta}[n]$, there is some $\gamma\in \ol{\gamma}$ such that $\delta\leq \gamma$. We check this for each of the four operations:
\begin{itemize}
\item[(1)] If $\ol{\delta}$ is obtained from $\ol{\gamma}$ by a reflection at a non-asterisked vertex, then any root in $\ol{\delta}$ is obtained from one in $\ol{\gamma}$ with the same amount of each asterisked coefficient, just by performing that reflection.
\item[(2)] Suppose $\ol{\delta}$ is obtained from $\ol{\gamma}$ by a reflection at an internal vertex where $\ol{\delta}$ has coefficient $b$, and let $\delta \in \ol{\delta}[n]$. If $b$ is alone in $\delta$, then reflecting there will bring us up to a root in $\ol{\gamma}$. Otherwise, we know that some neighbor of $b^*$ in $\ol{\delta}$ must have a coefficient $b' > b$, or else $b$ could not be smaller than the initial value at its vertex. By repeatedly reflecting at the instances of $b$ which neighbor instances of $b'$, we decrease the number of repetitions of $b$ while moving up in the root poset:
\begin{center}
\begin{tikzpicture}
\node (00) at (-3, 0) {$\cdots$};
\node (10) at (-2, 0) {$b$};
\node (20) at (-1, 0) {$b'$};
\node (30) at (0, 0) {$b'$};
\node (40) at (1, 0) {$b'$};
\node (50) at (2, 0) {$\cdots$};
\draw 	 (00) -- (10) -- (20) -- (30) -- (40) -- (50);

\draw[<-] (-0.5, -0.5) -- (-0.5, -1);

\node (01) at (-3, -1.5) {$\cdots$};
\node (11) at (-2, -1.5) {$b$};
\node (21) at (-1, -1.5) {$b$};
\node (31) at (0, -1.5) {$b'$};
\node (41) at (1, -1.5) {$b'$};
\node (51) at (2, -1.5) {$\cdots$};
\draw 	 (01) -- (11) -- (21) -- (31) -- (41) -- (51);

\draw[<-] (-0.5, -2) -- (-0.5, -2.5);

\node (02) at (-3, -3) {$\cdots$};
\node (12) at (-2, -3) {$b$};
\node (22) at (-1, -3) {$b$};
\node (32) at (0, -3) {$b$};
\node (42) at (1, -3) {$b'$};
\node (52) at (2, -3) {$\cdots$};
\draw 	 (02) -- (12) -- (22) -- (32) -- (42) -- (52);
\end{tikzpicture}
\end{center}

Thus we reduce to the case that $b$ is alone.

\item[(3)] Suppose that $\ol{\delta}$ is obtained from $\ol{\gamma}$ by inserting the weighted (without loss of generality) left sum, which we call $b$, as the new left endpoint, and let $\delta\in \ol{\delta}[n]$. As in case (2), if $b$ is alone on the left end of the stretched path, reflecting there moves back up to an element of $\ol{\gamma}$, while if $b$ is not alone, the next coefficient to the right on the path will be larger, and we can move up to a case where $b$ is alone.

\item[(4)] Suppose that $\ol{\delta}$ is obtained from $\ol{\gamma}$ by reflecting at the (without loss of generality) left endpoint, resulting in the value $b$. Let $a$ be the weighted left sum, and let $c$ be the asterisked coefficient immediately to the right of $b$.

If $c\leq b$, then $b$ has no asterisk, so any $\delta\in \ol{\delta}[n]$ has only one instance of $b$ preceding $c$. Reflecting there returns us to a root in $\ol{\gamma}$. If $c > b$, $b$ may appear multiple times in $\delta$; however, since $c > b$, we can apply the same reasoning as in cases (2) and (3) to move up to an element of $\ol{\delta}[n]$ with only one instance of $b$, whereupon we fall back to the previous reasoning.
\end{itemize}
\end{proof}

Thus we can capture $\downset\st_n(\alpha)$ for sufficiently large $n$. We now derive the concrete consequence that $|\downset\st_n(\alpha)|$ is a polynomial in $n$.

First, we must deal with redundancy between our stretching classes, since they may nontrivally overlap. Fortunately, those overlaps are also stretching classes.

\begin{lemma}
The intersection of two stretching classes is either empty, a single root, or a stretching class.
\end{lemma}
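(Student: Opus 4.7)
The plan is to compare the defining data of $\ol\beta$ and $\ol\gamma$ component by component and construct their intersection as a common refinement. Any element of a stretching class has fixed values off the stretched path, so if the two classes disagree at some off-path vertex the intersection is empty; otherwise we reduce to analyzing the stretched path.

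The key structural observation is a rigidity property: because no two adjacent asterisked values in a stretching class are equal, any function $f$ lying in $\ol\beta$ decomposes \emph{uniquely} along the stretched path as $\ol\beta$'s prescribed prefix, followed by a concatenation of maximal constant blocks taking the asterisked values in the prescribed order (each block nonempty), followed by $\ol\beta$'s prescribed suffix. Thus membership in $\ol\beta$ is determined by the value sequence alone, and each position on the path is unambiguously assigned either a fixed role or an asterisked role.

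Using this, I would merge the two prescriptions by sweeping inward from both ends of the stretched path. Where both classes prescribe fixed values at the same position, they must agree, and if not the intersection is empty. Where one class has a fixed value at a position that the other treats as part of its asterisked middle, that value must equal the current entry in the other class's asterisked sequence; it then occurs as an initial block of forced length, absorbing asterisks into a common extended prefix. A symmetric argument handles the right end and suffixes. On the remaining overlap, both classes must prescribe the same asterisked sequence (entry by entry) or the intersection is empty. Compiling the merged prefix, asterisked middle, suffix, and off-path data yields either a stretching class (when some asterisks remain) or a single function, and in the latter case it must be a root since it lies in $\ol\beta$, invoking Propositions \ref{stretchingroots} and \ref{squishing} to view stretching classes as sets of roots.

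The main obstacle is organizing the case analysis when one class's fixed prefix or suffix extends into the other's asterisked region: such extensions can force specific repetition counts and promote previously-repeatable values to fixed ones, and one must check that the extension is consistent with the asterisked-alternation constraint so that the merged data is itself a valid stretching class. Once this merging is carefully tracked, the verification that it describes exactly $\ol\beta \cap \ol\gamma$ follows directly from the uniqueness of block decompositions established at the outset.
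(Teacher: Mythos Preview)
Your plan is correct and rests on the same key observation as the paper: because adjacent asterisked values differ, the sequence of distinct consecutive values on the stretched path is uniquely determined by any member function. The paper exploits this more directly. It collapses each class to a word pattern of the form $a_1^{m_1}\cdots a_r^{m_r}\, a_{r+1}^{\geq m_{r+1}} a_{r+2}^{\geq 1}\cdots a_{s-1}^{\geq 1} a_s^{\geq m_s}\, a_{s+1}^{m_{s+1}}\cdots a_k^{m_k}$ with no two consecutive $a_i$ equal, observes that two such patterns intersect nontrivially only if they share the same underlying sequence $a_1,\ldots,a_k$, and then computes the intersection symbol-by-symbol via three one-line rules for $a^m\cap a^{m'}$, $a^{\geq m}\cap a^{m'}$, and $a^{\geq m}\cap a^{\geq m'}$. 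This replaces your inward sweep by a pointwise computation in the collapsed indexing, so the ``main obstacle'' you flag---tracking how one class's fixed prefix absorbs into the other's asterisked blocks---simply does not arise. Your route would succeed once that bookkeeping is carried out (note in particular that after the sweep one class may sit mid-block when the other's fixed part ends, so ``same asterisked sequence entry by entry'' needs a small refinement), but the paper's word notation is the cleaner packaging of the same idea.
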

\begin{proof}
If the classes assume different values off the stretched path, then their intersection is empty. Thus it suffices to assume the classes agree off the stretched path and consider the requirements they impose on the stretched path. It will clarify matters to introduce a slightly different notation.

For a symbol $a$ and positive integer $m$, let $a^{\geq m}$ denote the set of all words consisting of at least $m$ copies of $a$ and let $a^m$ denote the singleton set containing the word consisting of exactly $m$ copies of $a$. Then for multiple symbols $a_1,\ldots, a_k$ and integers $m_1,\ldots, m_k$, we denote by
\[
a_1^{(\geq) m_1}a_2^{(\geq) m_2}\cdots a_k^{(\geq) m_k}
\]
the set of all words obtained by concatenating words from these sets. 

In particular, by collapsing together repeated values, we see that the sequences of values which a stretching class allows to appear on the stretched path are described by an expression of the form
\[
a_1^{m_1}\cdots a_r^{m_r} a_{r+1}^{\geq m_{r+1}} a_{r+2}^{\geq 1} \cdots a_{s-1}^{\geq 1} a_s^{\geq m_s} a_{s+1}^{m_{s+1}}\cdots a_k^{m_k}
\]
in which no consecutive $a_i$'s are the same.

For the sets defined by two such expressions to intersect nontrivially, the values $a_1,\ldots, a_k$ must be the same. In this case, we can find their intersection by computing it for each $a_i$-value individually. We have
\begin{align*}
a^m \cap a^{m'} &= \begin{cases}
a^m & m = m' \\
\emptyset & m \neq m'
\end{cases} \\
a^{\geq m} \cap a^{m'} &= \begin{cases}
a^{m'} & m\leq m' \\
\emptyset & m > m'
\end{cases} \\
a^{\geq m} \cap a^{\geq m'} &= a^{\geq \max\{m, m'\}}
\end{align*}
Using these rules, one can check that intersecting two sets of the above form produces the empty set, a singleton (if no $a^{\geq m}$ terms remain), or another set of that form. 
\end{proof}

The final step is to compute the size of a single stretching class, which is a straightforward counting problem.
\begin{lemma}
Suppose $\ol{\beta}$ is a stretching class on $\st_m(G)$ with $\ell$ asterisked vertices. Then
\[
|\ol{\beta}[n]| = \binom{n - m + \ell - 1}{\ell - 1}
\]
\end{lemma}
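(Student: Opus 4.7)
The plan is a direct counting argument: an element of $\ol{\beta}[n]$ is determined entirely by the multiplicities of its asterisked values on the stretched path, and we count the allowable tuples of multiplicities by a stars-and-bars argument.

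First I would observe that any $\beta'\in\ol{\beta}[n]$ must agree with $\ol{\beta}$ off the stretched path and at the non-asterisked vertices at the ends of the stretched path; these contributions are forced. So $\beta'$ is determined by what it does on the remaining portion of the stretched path of $\st_n(G)$, where the asterisked values must appear (in the prescribed order) with positive multiplicity.

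Next I would count the length of that remaining portion. Recall $\st_m(G)$ has a stretched path with $m+1$ vertices, of which exactly $\ell$ are asterisked and $m+1-\ell$ are fixed non-asterisked vertices at the ends. Under the stretching-class description, those $m+1-\ell$ non-asterisked path vertices persist in $\st_n(G)$, whose stretched path has $n+1$ vertices. This leaves exactly $(n+1)-(m+1-\ell) = n-m+\ell$ vertices on which to distribute the $\ell$ asterisked values.

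Finally, choosing $\beta'\in\ol{\beta}[n]$ amounts to choosing positive integers $k_1,\ldots,k_\ell$ with $k_1+\cdots+k_\ell = n-m+\ell$, where $k_j$ is the number of times the $j$-th asterisked value is repeated. The constraint in Definition \ref{stretchingclass} that no two adjacent asterisked values of $\ol{\beta}$ are equal guarantees this tuple is uniquely recoverable from $\beta'$, so distinct tuples give distinct functions. By a standard stars-and-bars count, the number of such tuples is $\binom{n-m+\ell-1}{\ell-1}$, which is the stated formula.

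There is no real obstacle here beyond careful bookkeeping of path lengths; the non-triviality of the definition lives in the adjacency constraint on asterisked values, which is precisely what makes the multiplicity tuple a well-defined invariant of $\beta'$ and thus makes the bijection with compositions of $n-m+\ell$ into $\ell$ positive parts exact.
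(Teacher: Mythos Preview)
Your proof is correct and is exactly the straightforward counting argument the paper has in mind; in fact the paper leaves this lemma unproved, simply calling it ``a straightforward counting problem,'' and your stars-and-bars bookkeeping fills in precisely that omitted verification.
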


Combining Theorem \ref{stableposet} with the last two lemmas and the inclusion-exclusion principle allows us to conclude:

\begin{theorem}
\label{polynomialgrowth}
Let $\alpha, P, n_0$ be as above. Let $\ell$ be the largest value such that there is a stretching class in $P$ with $\ell$ asterisked vertices. Then there is a polynomial $p(n)$ of degree $\ell - 1$ such that $|\downset\st_n(\alpha)| = p(n)$ for $n \geq n_0 + 1$. 
\end{theorem}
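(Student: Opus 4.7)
My plan is to combine the three preceding lemmas. First, Theorem \ref{stableposet} gives
\[
|\downset\st_n(\alpha)| = \left|\bigcup_{\ol{\beta}\in P} \ol{\beta}[n]\right|
\]
for $n\geq n_0 + 1$, so inclusion--exclusion expresses this as an alternating sum, over nonempty subsets $S\subseteq P$, of $|\bigcap_{\ol{\beta}\in S} \ol{\beta}[n]|$. By the intersection lemma, each such intersection is empty, a singleton, or again a stretching class; by the counting lemma, if it is a stretching class with $\ell'$ asterisks defined on $\st_{m'}(G)$ then its size is $\binom{n - m' + \ell' - 1}{\ell' - 1}$, which agrees with a polynomial in $n$ of degree $\ell' - 1$ once $n\geq m'$. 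Since $P$ is finite, for $n$ large enough every nonzero term is simultaneously a polynomial in $n$, so $|\downset\st_n(\alpha)|$ equals a polynomial $p(n)$ there.

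The next step is controlling the degree. Inspecting the intersection formulas in the proof of the intersection lemma, the number of asterisks of an intersection is the number of positions where every factor has a $\geq$-term, and so is at most the minimum number of asterisks among the classes in $S$. In particular, no intersection has more than $\ell$ asterisks, so every term in the inclusion--exclusion sum is a polynomial of degree at most $\ell - 1$. Hence $\deg p \leq \ell - 1$.

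For the matching lower bound, I would avoid a direct leading-coefficient computation (which would require a delicate cancellation argument) and instead use a comparison: by definition of $\ell$ there is some $\ol{\beta}_0\in P$ with $\ell$ asterisks, say on $\st_m(G)$, and for $n\geq \max\{n_0+1, m\}$ we have
\[
p(n) = |\downset\st_n(\alpha)| \geq |\ol{\beta}_0[n]| = \binom{n - m + \ell - 1}{\ell - 1}.
\]
The right-hand side is a polynomial in $n$ of degree $\ell - 1$ with positive leading coefficient, so a polynomial $p$ that dominates it for all sufficiently large $n$ must itself have degree at least $\ell - 1$. Combined with the upper bound this gives $\deg p = \ell - 1$.

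The main obstacle I anticipate is simply bookkeeping the claim that intersections of stretching classes have at most $\min$ many asterisks; this needs a careful reading of the normal form used in the intersection lemma, but once that is in hand the rest of the argument is a routine assembly of the three lemmas with an inclusion--exclusion wrapper and the comparison above.
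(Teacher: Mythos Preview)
Your proposal is correct and follows the same route as the paper: the paper's proof is a single sentence invoking Theorem~\ref{stableposet}, the intersection lemma, the counting lemma, and inclusion--exclusion, which is exactly your first paragraph. Your additional work on the degree --- the upper bound via the observation that the number of $\geq$-terms in an intersection is at most the minimum over the factors, and the lower bound via comparison with a single class $\ol{\beta}_0$ of $\ell$ asterisks --- is sound and in fact supplies detail the paper leaves implicit. One small point: you establish polynomiality for ``$n$ large enough'' rather than the explicit threshold $n\geq n_0+1$ in the statement; the paper's one-line proof does not verify that threshold either, and the introduction states the result only for sufficiently large $n$, so this is not a gap relative to what the paper actually argues.
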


\section{Characteristic polynomials of shard arrangements}
\label{shards}

Recall that $V$ is the vector space containing the roots. Let $V^*$ be the dual space, and let $\<f, \alpha\> := f(\alpha)$ be the natural pairing $V^*\times V\to \R$. Then we consider the contragradient action of the Coxeter group $W$ on $V^*$, defined by $\<w(f), \alpha\> = \<f, w^{-1}(\alpha)\>$. For each root $\alpha$, $s_\alpha$ acts by a reflection over the hyperplane $\alpha^\perp := \{f\in V^*\mid \<f, \alpha\> = 0\}$.

It is in the context of these hyperplanes that we consider \textbf{rank 2 subarrangements}, the relation of \textbf{cutting}, the \textbf{fractures} obtained by intersecting each hyperplane with the ones cutting it, and the \textbf{shards} separated by these fractures, as described in Section \ref{shardsintro}. Our distinguished base region $D$ will be the subset of $V^*$ which pairs positively with every simple root, and thus with every positive root.

We will use an alternative description of fractures using reduced expressions, due to David Speyer and Hugh Thomas. First, we define a \textbf{rank 2 subsystem} of $\Phi$ to be the set of roots lying in a fixed 2-dimensional subspace which they span. These roots are dual to the hyperplanes of a rank 2 subarrangement, and we define the \textbf{fundamental roots} of the subsystem to be the positive roots corresponding to the basic hyperplanes.

\begin{lemma}
Let $R$ be a rank 2 subsystem. Let $\alpha, \beta$ be its fundamental roots. Then every positive root in $R$ is a nonnegative linear combination of $\alpha$ and $\beta$, and this property characterizes the fundamental roots.
\end{lemma}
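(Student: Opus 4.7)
The plan is to obtain an explicit description of the region $D^R$ of the rank $2$ subarrangement that contains $D$, and then read off the nonnegativity of the coefficients directly.

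Let $U = \on{span}(R) \subseteq V$; by hypothesis this is two-dimensional. Every hyperplane of the rank $2$ subarrangement contains the codimension-$2$ subspace $U^\perp \subseteq V^*$, so the subarrangement descends via the quotient $V^*\twoheadrightarrow U^*$ to a central line arrangement in $U^*$. Each region of a central line arrangement in $2$-space is an open angular sector bounded by exactly two of the lines, and by definition those two lines correspond to the basic hyperplanes. Together with the fact that $D \subseteq D^R$ pairs positively with both $\alpha$ and $\beta$, this forces
\[
D^R = \{f\in V^*\mid \<f, \alpha\> > 0 \text{ and } \<f, \beta\> > 0\}.
\]

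With this in hand, let $\gamma$ be any positive root in $R$. Since $\gamma^\perp$ belongs to the rank $2$ subarrangement, $\<-, \gamma\>$ is of constant sign on $D^R$, and since $D\subseteq D^R$ and $\gamma$ is positive, that sign is positive. Thus
\[
\{f : \<f, \alpha\> > 0,\ \<f, \beta\> > 0\} \subseteq \{f : \<f, \gamma\> > 0\}.
\]
Because $\alpha,\beta,\gamma$ all lie in $U$, this inclusion descends to $U^*$, and Farkas' lemma (or direct inspection in two dimensions) yields $\gamma = a\alpha + b\beta$ with $a, b\geq 0$. For the characterization, suppose $\alpha', \beta'$ are positive roots in $R$ with the same nonnegative-combination property. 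Applying the first part to each pair shows that $\{\alpha, \beta\}$ and $\{\alpha', \beta'\}$ generate the same closed convex cone, namely the one spanned by the positive roots of $R$. A proper closed convex cone in $2$ dimensions has exactly two extreme rays; since distinct positive roots are not positive scalar multiples of one another, each extreme ray contains at most one positive root, forcing $\{\alpha, \beta\} = \{\alpha', \beta'\}$.

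The main obstacle is justifying the explicit formula for $D^R$: the definition of ``basic hyperplane'' only asserts that $\alpha^\perp, \beta^\perp$ bound $D^R$, and one must rule out a third hyperplane of the subarrangement cutting into the intersection of half-spaces above. This is precisely why one passes to the two-dimensional quotient $U^*$, where every region of a central line arrangement is automatically the angular sector cut out by just its two bounding lines.
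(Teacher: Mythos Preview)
Your proof is correct and follows essentially the same route as the paper: both identify $D^R$ as $\{f : \langle f,\alpha\rangle>0,\ \langle f,\beta\rangle>0\}$ and deduce nonnegativity of the coefficients from the fact that every positive root in $R$ pairs positively on this set. The only cosmetic differences are that you invoke Farkas' lemma where the paper argues by direct contradiction, and for the characterization you use an extreme-ray argument where the paper instead verifies directly that the hyperplanes of any pair with the nonnegative-combination property bound $D^R$.
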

\begin{proof}
Let $R^\perp$ be the associated rank 2 subarrangement. Let $D_R$ be the region of $R^\perp$ containing $D$. It consists of points which pair positively with every positive root in $R$.

In particular, because $\alpha^\perp$ and $\beta^\perp$ border $D_R$, if a point in $V^*$ pairs positively with $\alpha$ and $\beta$, it also pairs positively with every other positive root in $R$. If some other positive root $\gamma$ is a combination of $\alpha$ and $\beta$ with a negative coefficient, we can find a point which pairs positively with $\alpha$ and $\beta$ but not with $\gamma$, a contradiction.

Conversely, suppose every positive root in $R$ is a nonnegative linear combination of $\alpha$ and $\beta$. Then $D_R$ consists of points which pair positively with $\alpha$ and $\beta$, so it is bordered by $\alpha^\perp$ and $\beta^\perp$.
\end{proof}

\begin{lemma}
\label{fundamentaltofundamental}
Let $R$ be a rank 2 subsystem not containing $\alpha_i$. Suppose $\alpha$ and $\beta$ are the fundamental roots of $R$. Then $s_i\alpha$ and $s_i\beta$ are the fundamental roots of $s_iR$. 
\end{lemma}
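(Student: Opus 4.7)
The plan is to directly verify the characterization given by the preceding lemma: that $s_i\alpha$ and $s_i\beta$ are positive roots of $s_i R$ such that every positive root of $s_i R$ is a nonnegative linear combination of them.

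First I would invoke the standard fact that $s_i$ permutes the positive roots other than $\alpha_i$, sending $\alpha_i$ to $-\alpha_i$. Since $R$ contains neither $\alpha_i$ nor $-\alpha_i$ (as $\alpha_i\notin R$ and $R = -R$), the reflection $s_i$ restricts to a bijection from the positive roots of $R$ to the positive roots of $s_i R$. In particular, $s_i\alpha$ and $s_i\beta$ are positive roots of $s_i R$.

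Next, given any positive root $\gamma'\in s_i R$, write $\gamma' = s_i\gamma$ for a unique positive $\gamma\in R$. By the characterization of fundamental roots in the previous lemma, there exist $a,b\geq 0$ with $\gamma = a\alpha + b\beta$. Applying the linear map $s_i$ yields $\gamma' = a\,s_i\alpha + b\,s_i\beta$, which is a nonnegative linear combination of $s_i\alpha$ and $s_i\beta$. Since this holds for every positive root of $s_i R$, the converse half of the previous lemma immediately identifies $s_i\alpha$ and $s_i\beta$ as the fundamental roots of $s_i R$.

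There isn't really a main obstacle here; the content is just linearity of $s_i$ together with the hypothesis $\alpha_i\notin R$, which ensures positivity is preserved. The only point that requires care is confirming that $s_i$ restricts to a bijection on the positive roots of $R$, and this follows from the assumption by the standard permutation property of simple reflections.
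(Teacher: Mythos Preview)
Your proof is correct and follows essentially the same approach as the paper: both use that $s_i$ permutes the positive roots other than $\alpha_i$, so that linearity of $s_i$ carries the nonnegative cone spanned by $\alpha,\beta$ to that spanned by $s_i\alpha,s_i\beta$, and then invoke the characterization from the preceding lemma. Your write-up simply spells out the details more explicitly than the paper's two-sentence version.
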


\begin{proof}
Applying $s_i$ sends the set of positive roots other than $\alpha_i$ to itself. In particular, if every positive root in $R$ is a nonnegative linear combination of $\alpha$ and $\beta$, then every positive root in $s_iR$ is a nonnegative linear combination of $s_i\alpha$ and $s_i\beta$.
\end{proof}

\begin{proposition}
\label{fractureinductionstep} 
Let $\beta, \beta'$ be positive roots with $\beta = s_i\beta'$, such that $\beta - \beta'$ is a positive multiple of $\alpha_i$. Then the fractures of $\beta^\perp$ consist of $\alpha_i^\perp\cap\beta^\perp$ and all subspaces of the form $s_i(F')$ as $F'$ ranges over the fractures of $(\beta')^\perp$.
\end{proposition}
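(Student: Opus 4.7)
The plan is to classify fractures of $\beta^\perp$ by the rank $2$ subsystem $R$ they arise from and use $s_i$ to transport the bulk of the analysis over to $\beta'$. Recall that the fractures of $\beta^\perp$ correspond bijectively to rank $2$ subsystems $R$ containing $\beta$ in which $\beta$ is not fundamental, via $F = R^\perp \cap \beta^\perp$. I would partition such $R$ into two cases according to whether $\alpha_i \in R$.

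If $\alpha_i \in R$, then $R$ is the unique subsystem spanned by $\alpha_i$ and $\beta$, and I would check that $\beta$ is always non-fundamental there. Writing $\beta = c\alpha_i + \beta'$ with $c > 0$ and $\beta'$ a positive root, $\beta$ sits strictly in the interior of the positive cone spanned by $\alpha_i$ and $\beta'$; by the first lemma of this section the fundamental roots are precisely the extreme rays of this cone, so $\beta$ cannot be fundamental. Hence this case always contributes exactly one fracture, namely $\alpha_i^\perp \cap \beta^\perp$.

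If $\alpha_i \notin R$, then because $R$ is closed under negation, $s_iR$ also does not contain $\alpha_i$, and it contains $s_i\beta = \beta'$. Since $s_i$ is an involution, $R \mapsto s_iR$ is a bijection between rank $2$ subsystems not containing $\alpha_i$ through $\beta$ and rank $2$ subsystems not containing $\alpha_i$ through $\beta'$; by Lemma \ref{fundamentaltofundamental} this bijection preserves the non-fundamental-vs-fundamental dichotomy, so $\beta$ is non-fundamental in $R$ if and only if $\beta'$ is non-fundamental in $s_iR$. Using the contragradient action, $R^\perp \cap \beta^\perp = s_i\bigl((s_iR)^\perp \cap (\beta')^\perp\bigr)$, so the fractures of $\beta^\perp$ coming from this case are exactly the subspaces $s_i(F')$ where $F'$ ranges over fractures of $(\beta')^\perp$ arising from subsystems not containing $\alpha_i$.

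To match the statement, I would finish by observing that if $\beta'$ happens to be non-fundamental in the $\alpha_i$-$\beta'$ subsystem (so that $F' = \alpha_i^\perp \cap (\beta')^\perp$ is itself a fracture of $(\beta')^\perp$), then $s_i(F') = \alpha_i^\perp \cap \beta^\perp$ reproduces the fracture already obtained in the first case. Consequently, allowing $F'$ to range over \emph{all} fractures of $(\beta')^\perp$ introduces no new subspaces, and the union of the two cases is exactly the description claimed. The main technical point I foresee is verifying the non-fundamentality of $\beta$ in the $\alpha_i$-$\beta$ subsystem (which relies on identifying fundamental roots with extreme rays of the positive cone) and cleanly handling the mild overlap between the two cases.
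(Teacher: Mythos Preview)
Your proposal is correct and follows essentially the same approach as the paper: partition rank~2 subsystems through $\beta$ according to whether they contain $\alpha_i$, handle the $\alpha_i$-subsystem directly, and use Lemma~\ref{fundamentaltofundamental} for the rest. You are slightly more careful than the paper in explicitly noting that if $\beta'$ happens to be non-fundamental in the $\alpha_i$--$\beta'$ subsystem, the resulting fracture $F'=\alpha_i^\perp\cap(\beta')^\perp$ satisfies $s_i(F')=\alpha_i^\perp\cap\beta^\perp$ and so introduces nothing new; the paper leaves this overlap implicit.
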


\begin{proof}
First, we show that $\alpha_i^\perp$ cuts $\beta^\perp$. Because $\beta$ is a linear combination of $\beta'$ and $\alpha_i$, they all lie in a rank 2 subsystem. $\alpha_i^\perp$ is a basic hyperplane, because it directly borders $D$. On the other hand, $\beta^\perp$ is not: if it were, $\beta'$ would be a positive linear combination of $\alpha_i$ and $\beta$, which is not true. Thus $\alpha_i^\perp$ intersects $\beta^\perp$ in a fracture.

Now let $R$ be any rank 2 subsystem containing $\beta$ other than the one just considered. By Lemma \ref{fundamentaltofundamental}, $s_i$ sends the fundamental roots of $R$ to those of $s_iR$. In particular, $R$ induces a fracture $\alpha^\perp\cap \beta^\perp$ if and only if $s_iR$ induces a fracture  $(s_i\alpha)^\perp\cap (\beta')^\perp$.
\end{proof}

\begin{proposition}
\label{fracturelist}
Suppose the root $\alpha$ has a reduced expression $s_{y_\ell}s_{y_{\ell-1}}\cdots s_{y_1}(\alpha_{y_0})$. Then the fractures of $\alpha^\perp$ are its intersections with
\begin{align*}
&\alpha_{y_\ell}^\perp  \\ 
&s_{y_\ell}(\alpha_{y_{\ell-1}})^\perp \\
&s_{y_\ell}s_{y_{\ell-1}}(\alpha_{y_{\ell-2}})^\perp \\
&\vdots \\
&s_{y_\ell}s_{y_{\ell-1}}\cdots s_{y_2}(\alpha_{y_1})^\perp
\end{align*}
\end{proposition}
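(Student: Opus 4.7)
The proof should proceed by induction on the length $\ell$ of the reduced expression, using Proposition \ref{fractureinductionstep} as the inductive step.

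For the base case $\ell = 0$, the root $\alpha = \alpha_{y_0}$ is simple, and the claimed list of hyperplanes is empty. To verify that $\alpha_{y_0}^\perp$ genuinely has no fractures, I would note that it borders the base region $D$, hence borders $D_R$ for every rank 2 subarrangement containing it. Consequently $\alpha_{y_0}^\perp$ is always one of the basic hyperplanes of such a subarrangement and so is never cut.

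For the inductive step, set $\beta' = s_{y_{\ell-1}}\cdots s_{y_1}(\alpha_{y_0})$ and $\beta = s_{y_\ell}(\beta') = \alpha$. Since the given expression is reduced, it corresponds to a saturated chain in the root poset, so $\beta - \beta'$ is a positive multiple of $\alpha_{y_\ell}$ and Proposition \ref{fractureinductionstep} applies with $i = y_\ell$. It tells us that the fractures of $\alpha^\perp$ are $\alpha_{y_\ell}^\perp \cap \alpha^\perp$ together with the subspaces $s_{y_\ell}(F')$ as $F'$ ranges over the fractures of $(\beta')^\perp$. The expression $s_{y_{\ell-1}}\cdots s_{y_1}(\alpha_{y_0})$ is itself reduced (any shorter expression for $\beta'$ would yield a shorter one for $\alpha$ after prepending $s_{y_\ell}$), so the inductive hypothesis identifies the fractures of $(\beta')^\perp$ as its intersections with $s_{y_{\ell-1}}\cdots s_{y_{j+1}}(\alpha_{y_j})^\perp$ for $1\leq j\leq \ell-1$.

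It then remains to track how $s_{y_\ell}$ acts on these fractures. Writing a fracture of $(\beta')^\perp$ as $(\beta')^\perp \cap \gamma^\perp$, its image under $s_{y_\ell}$ is $s_{y_\ell}\bigl((\beta')^\perp\bigr) \cap s_{y_\ell}(\gamma^\perp) = \alpha^\perp \cap s_{y_\ell}(\gamma)^\perp$, so prepending $s_{y_\ell}$ to each $\gamma$ in the inductive list yields exactly $s_{y_\ell}s_{y_{\ell-1}}\cdots s_{y_{j+1}}(\alpha_{y_j})^\perp$ for $1 \leq j \leq \ell - 1$. Together with the extra fracture $\alpha_{y_\ell}^\perp \cap \alpha^\perp$ (the $j = \ell$ term), this reproduces the list in the proposition.

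Given that Proposition \ref{fractureinductionstep} does all the geometric work, the argument here is essentially bookkeeping; the only subtle points are verifying the base case (that simple hyperplanes have no fractures because they border $D$) and confirming that the truncation $s_{y_{\ell-1}}\cdots s_{y_1}(\alpha_{y_0})$ remains reduced so that the inductive hypothesis can be invoked. Neither is a serious obstacle, so I expect no real difficulty beyond careful indexing when matching the list of fractures to the hyperplanes produced by $s_{y_\ell}$.
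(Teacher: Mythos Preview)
Your proposal is correct and follows essentially the same route as the paper: induction on the length of the reduced expression, with the base case handled by observing that a simple root's hyperplane is never cut, and the inductive step carried out by invoking Proposition~\ref{fractureinductionstep} and prepending $s_{y_\ell}$ to the previously obtained list. Your write-up is in fact slightly more explicit than the paper's, since you spell out why the truncated expression remains reduced and how $s_{y_\ell}$ transforms each intersection $(\beta')^\perp\cap\gamma^\perp$ into $\alpha^\perp\cap s_{y_\ell}(\gamma)^\perp$.
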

\begin{proof}
By induction on depth. A simple root is fundamental in every rank 2 subsystem, so its hyperplane has no fractures. Now let $\alpha' = s_{y_{\ell-1}}\cdots s_{y_1}(\alpha_{y_0})$, and suppose the fractures of $(\alpha')^\perp$ are its intersections with
\begin{align*}
&\alpha_{y_{\ell-1}}^\perp  \\ 
&s_{y_{\ell-1}}(\alpha_{y_{\ell-2}})^\perp \\
&s_{y_{\ell-1}}s_{y_{\ell-2}}(\alpha_{y_{\ell-3}})^\perp \\
&\vdots \\
&s_{y_{\ell-1}}s_{y_{\ell-2}}\cdots s_{y_2}(\alpha_{y_1})^\perp
\end{align*}
By Proposition \ref{fractureinductionstep}, the list of fractures of $\alpha^\perp$ is obtained by applying $s_{y_\ell}$ to these and appending $\alpha_{y_\ell}^\perp\cap \alpha^\perp$, as required.
\end{proof}

In order to systematically describe the fractures of a root as we stretch it, it will be useful to systematically write reduced expressions for the stretches. Recall from section \ref{stretchingandexpressions} that the obstruction to this is case (3) of the trichotomy outlined there. In this case, just replacing each reflection at the elastic vertex will not produce a reduced expression. 

However, in the long run of stretching, we can avoid this obstruction. We may want to stretch some amount before choosing a reduced expression, so we first specify a way of adjusting our base diagram from $G$ to one of its stretches.

\begin{definition}
Let $G$ be a Coxeter diagram with elastic data $(x, L_x, R_x)$ and let $x_i$ be a vertex on the stretched path of $\st_{n_0}(G)$. Then the \textbf{elastic data induced by $x_i$} for $\st_{n_0}(G)$ is
\[
\begin{aligned}
&(x_0, L_x, \{x_1\}) && \text{ if } i = 0 \\
&(x_i, \{x_{i-1}\}, \{x_{i+1}\}) && \text{ if } 0 < i < n_0 \\
&(x_{n_0}, \{x_{n_0 - 1}\}, R_x) && \text{ if } i = n_0
\end{aligned}
\]
\end{definition}

\begin{lemma}
\label{systematicexpression}
Let $\alpha$ be a positive root. Then there exists some $n_0$, a vertex $x_i$ on the stretched path of $\st_n(G)$, and a reduced expression for $\st_{n_0}(\alpha)$ that avoids cases (2) and (3) of the above trichotomy with respect to the elastic data induced by $x_i$.
\end{lemma}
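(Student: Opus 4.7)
I would prove this by induction on $\depth(\alpha)$. For the base case with $\alpha$ simple, the claim is essentially vacuous: if $\alpha \neq \alpha_x$ then $\st_{n_0}(\alpha) = \alpha$ is itself a simple root of $\st_{n_0}(\Phi)$ and the reduced expression has no reflections to classify; if $\alpha = \alpha_x$, then the natural reduced expression $s_{x_0} s_{x_1} \cdots s_{x_{n_0-1}}(\alpha_{x_{n_0}})$ traces the unique chain from $\alpha_{x_{n_0}}$ up to the constant-$1$ function on the stretched path, and a direct check shows that when any $s_{x_j}$ fires the right-hand neighbor $x_{j+1}$ already has value $1$ matching the new value at $x_j$, placing the reflection in case (1) of the induced trichotomy at any chosen interior $x_i$.

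For the inductive step, I would fix a cover $\alpha' \lessdot \alpha$ in $\Phi$ with $\alpha = s_y(\alpha')$ and apply the inductive hypothesis to $\alpha'$, obtaining $n_0$, $x_i$, and a reduced expression for $\st_{n_0}(\alpha')$ satisfying the condition. If $y \neq x$, prepending $s_y$ yields a reduced expression for $\st_{n_0}(\alpha) = s_y(\st_{n_0}(\alpha'))$, and since the newly added reflection is not at $x_i$ the property is preserved. If $y = x$ and the cover is case (1) or (2) of the original trichotomy, I would prepend the appropriate chain from Figure \ref{backandforth} (of length $n_0 + 1$ or $2n_0 + 1$); direct inspection of the state at each step shows that whenever $s_{x_i}$ fires, the neighbor on the already-reflected side of the path carries the same value as $x_i$ at that moment, so the reflection lies in case (1) of the trichotomy induced by $(x_i, \{x_{i-1}\}, \{x_{i+1}\})$.

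The main obstacle is case (3) of the original trichotomy. Here Lemma \ref{stretchedcovers} tells us $\st_{n_0}(\alpha)$ and $\st_{n_0}(\alpha')$ are incomparable, so no chain of covers connects them and we cannot simply extend the expression from $\alpha'$. However, the ``first half'' chain $s_{x_0} s_{x_1} \cdots s_{x_{n_0}}$ is still a valid sequence of down covers in $\st_{n_0}(\Phi)$ regardless of the case, since the cover condition $S_L + S_R < 2b$ forces $S_L < b$ and each step of the chain decreases a single coefficient; moreover, the same direct check shows that every $s_{x_i}$ reflection in this half-chain is still case (1). What it does not do is land at $\st_{n_0}(\alpha')$: instead, it lands at a ``mixed-path'' root with values $(S_L, \ldots, S_L, b')$ along the stretched path, where $b' = S_L + S_R - b > S_L$.

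To close the argument, my plan is to prove a slightly stronger inductive statement that covers such mixed-path roots—essentially, that any positive root of $\st_{n_0}(\Phi)$ whose stretched-path coefficients take at most two distinct values arranged in a contiguous block admits a reduced expression avoiding cases (2) and (3) at $x_i$. Once this stronger statement is available, the case (3) half-chain above reduces $\st_{n_0}(\alpha)$ to a root covered by the strengthened hypothesis, and the induction closes. The main technical challenge I anticipate is formulating this strengthened hypothesis correctly and verifying that further reductions of such mixed-path roots (through non-path reflections, endpoint-path reflections, and additional first-half chains once the path re-equalizes) never force an interior $s_{x_i}$ outside of case (1).
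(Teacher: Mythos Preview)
Your handling of the base case and of cases (1) and (2) is essentially sound (for interior $x_i$; the endpoint cases need a word, but that is minor). The real gap is case (3), and the strengthened induction you sketch does not close it.

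The difficulty is twofold. First, the mixed-path root you reach after the half-chain lives in $\st_{n_0}(\Phi)$ and is not of the form $\st_{n_0}(\gamma)$ for any $\gamma\in\Phi$, so an induction on $\depth(\alpha)$ for $\alpha\in\Phi$ gives you nothing to invoke. Second, and more seriously, the ``at most two contiguous values on the path'' invariant you propose is not preserved under the reductions you will need: once some reflection at a vertex of $L_x$ changes the weighted left sum to $S_L'$, the next down-reflection at $x_0$ produces a path of the shape $(S_L', S_L, \ldots, S_L, b')$ with three distinct values. So the invariant breaks, and it is not clear how to enlarge the class of admissible path profiles without the argument unraveling.

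The paper avoids mixed-path roots entirely by two moves you are missing. It inducts on $\alpha(x)$, the coefficient at the elastic vertex, rather than on depth; and in cases (2) and (3) it \emph{pre-stretches by one} before reflecting. In $\st_1(\Phi)$ with the elastic data induced by $x_0$, the right neighbor of $x_0$ is $x_1$, which still carries the value $b$, so the weighted right sum for the induced data equals $b$ and the reflection $s_{x_0}$ is automatically case (1). After that reflection the value at the elastic vertex drops to $S_L < b$, and the induction on the elastic-vertex coefficient applies directly to $s_{x_0}(\st_1(\alpha'))$. A short sub-lemma (stretching a chain that avoids cases (2) and (3) at $x$ yields one avoiding them at any $x_i$) then lifts everything through further stretches. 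No mixed-path roots, no strengthened hypothesis.
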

We will refer to such a reduced expression as a \textbf{type (1) expression} with respect to the induced elastic data.
\begin{proof}

We proceed by induction on the coefficient at $x$. In the base case that $\alpha(x) = 0$, we can avoid all cases of the trichotomy, because we never have to reflect at $x$. Now consider a root $\alpha$ with $\alpha(x) > 0$ and begin constructing a chain down from it in the root poset. Suppose that, having reached the root $\alpha'$, we reflect at $x$ for the first time. 

Suppose first that $\alpha'$ satisfies case (1) of the trichotomy. Using the induction hypothesis, we obtain some stretch $\st_{n_0}(s_x(\alpha'))$ and a type (1) expression for this root with respect to some $x_i$. We now use the following lemma:

\begin{lemma}
If there exists a chain from $\alpha$ down to $\beta$ in the root poset which avoids cases (2) and (3) of the trichotomy with respect to $x$, then there is likewise such a chain from $\st_n(\alpha)$ down to $\st_n(\beta)$ which avoids cases (2) and (3) with respect to any $x_i$ on the stretched path.
\end{lemma}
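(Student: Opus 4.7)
The plan is to induct on the length of the chain from $\alpha$ down to $\beta$, translating one cover at a time into a sequence of reflections in $\st_n(\Phi)$. The base case $\alpha = \beta$ is trivial. For the inductive step, let $\alpha \gtrdot \alpha' = s_y(\alpha)$ be the first cover of the chain; the analysis splits according to whether $y = x$.

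If $y \neq x$, then the values at the neighbors of $y$ are unchanged by stretching, so the reflection formula gives $\st_n(s_y\alpha) = s_y\st_n(\alpha)$, and (as noted in the proof of Corollary \ref{lineardepth}) this remains a cover relation in the stretched root poset. Because we are not reflecting at any vertex $x_i$ of the stretched path, the trichotomy imposes no constraint on this step for any $x_i$. Prepending $s_y$ to the chain from $\st_n(\alpha')$ to $\st_n(\beta)$ given by induction finishes this subcase.

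If $y = x$, then the cover is case (1) by assumption, so writing $b := \alpha(x)$ we have $S_L < b$ and $S_R = b$. I use the identity
\[
\st_n(s_x(\alpha)) = s_{x_n}s_{x_{n-1}}\cdots s_{x_1}s_{x_0}(\st_n(\alpha))
\]
from the top half of Figure \ref{backandforth}, which gives a descending chain of length $n+1$ from $\st_n(\alpha)$ to $\st_n(s_x(\alpha))$ in which the $(j+1)$-th step reflects at $x_j$. For any fixed $x_i$, the only step relevant to the trichotomy with respect to the elastic data induced by $x_i$ is the one reflecting at $x_i$ itself; at that moment the values of $x_i$ and its stretched-path neighbors (replaced by the vertices of $L_x$ or $R_x$ as appropriate at the endpoints) can be read directly off Figure \ref{backandforth}, yielding value $b$ at $x_i$, weighted left sum $S_L$, and weighted right sum $b$. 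Thus the step at $x_i$ is again case (1) with respect to the induced elastic data. Concatenating with the chain from $\st_n(s_x(\alpha))$ to $\st_n(\beta)$ supplied by the induction hypothesis finishes this subcase.

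The main obstacle is the bookkeeping check in the second subcase, especially at the boundary positions $i = 0$ and $i = n$: for $i = n$ the required equality $S_R = b$ is precisely the case (1) hypothesis on the original cover, while for $0 \le i < n$ it follows because the right neighbor $x_{i+1}$ still holds the unreduced value $b$ at the moment we reflect at $x_i$. Similarly, at $i = 0$ the weighted left sum comes from the original $L_x$ and equals $S_L$, while for $i > 0$ the left neighbor $x_{i-1}$ has already been updated to $S_L$ before we reach $x_i$. With this verified, the induction goes through.
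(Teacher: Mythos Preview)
Your proof is correct and follows essentially the same approach as the paper's: replace each reflection at $x$ by the top half of the chain in Figure~\ref{backandforth} and observe that every resulting reflection at an $x_i$ lands in case~(1). The paper compresses this into a single sentence, whereas you spell out the induction on chain length and the boundary bookkeeping at $i=0$ and $i=n$; the added detail is accurate and the arguments are the same in substance.
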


\begin{proof}
Given such a chain, we can obtain a chain from $\st_n(\alpha)$ down to $\st_n(\beta)$ by replacing every reflection at $x$ with the first half of the chain constructed in the proof of Lemma \ref{stretchedcovers}. Then each reflection at $x_i$ in this chain also falls under case (1).
\end{proof}

In this case, we have a chain from $\alpha$ down to $s_x(\alpha')$ which avoids cases (2) and (3). Applying the lemma, we get a similar chain from $\st_{n_0}(\alpha)$ down to $\st_{n_0}(s_x(\alpha'))$, which we can append to the type (1) expression for $\st_{n_0}(s_x(\alpha'))$ to get a type (1) expression for $\st_{n_0}(\alpha)$.

Now suppose instead that $\alpha'$ satisfies case (3). Assume without loss of generality that $S_L < \alpha(x)$, while $S_R > \alpha(x)$. Then $\st_1(\alpha')$ satisfies case (1) with respect to vertex $x_0$. By the induction hypothesis, there is some $n_0$ and a type (1) expression for $\st_{n_0}(s_{x_0}(\st_1(\alpha')))$ with respect to some $x_{0i}$. Then by the above lemma we can turn the chain from $\st_1(\alpha)$ down to $s_{x_0}(\st_1(\alpha'))$ into a chain from $\st_{n_0 + 1}(\alpha)$ down to $\st_{n_0}(s_{x_0}(\st_1(\alpha')))$, which we can append to the type (1) expression for $\st_{n_0}(s_{x_0}(\st_1(\alpha')))$ to get a type (1) expression for $\st_{n_0+1}(\alpha)$.

If $\alpha'$ satisfies case (2), we apply the same argument as in case (3).
\end{proof}

Once we have a type (1) expression for $\st_{n_0}(\alpha)$ with respect to the elastic data induced by $x_i$, we can get a reduced expression for any $\st_{n_0 + n}(\alpha) = \st_n(\st_{n_0}(\alpha))$ by replacing each instance of $s_{x_i}$ with an appropriate choice of $s_{x_{i0}}s_{x_{i1}}\cdots s_{x_{in}}$ or $s_{x_{in}}s_{x_{i(n-1)}}\cdots s_{x_{i0}}$, as in part (1) of Lemma \ref{stretchedcovers}. In particular, this construction implies:
\begin{proposition}
\label{whattis}
The number of reflections at the elastic vertex in a type (1) expression for $\alpha$ is the depth growth rate of $\alpha$.
\end{proposition}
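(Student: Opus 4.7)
The strategy is to count lengths of reduced expressions directly. Starting with a type (1) expression
\[
\st_{n_0}(\alpha) = s_{y_\ell}\cdots s_{y_1}(\alpha_{y_0})
\]
containing $r$ reflections at $x_i$, I would ``unfold'' each instance of $s_{x_i}$ into a chain of $n+1$ reflections $s_{x_{i,0}}\cdots s_{x_{i,n}}$ (or its reverse, depending on orientation) as described in part (1) of Lemma \ref{stretchedcovers}, while leaving the remaining reflections unchanged. Iterating the argument used for Proposition \ref{stretchingroots} shows that this unfolded expression represents $\st_{n_0 + n}(\alpha)$, and by construction its length is $(\ell - r) + r(n+1) = \ell + rn$.

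I would then argue this unfolded expression is reduced by exhibiting it as a saturated chain in the root poset. The inherited reflections are cover relations because the original expression was reduced. The newly inserted reflections each strictly decrease one coefficient on the stretched path, as read directly off the top half of Figure \ref{backandforth}, so they are covers too. Hence the unfolded expression corresponds to a saturated chain from $\alpha_{y_0}$ up to $\st_{n_0 + n}(\alpha)$, and since the root poset is graded by depth, its length must equal $\depth(\st_{n_0 + n}(\alpha)) - 1$. Combined with Corollary \ref{lineardepth}, which gives $\depth(\st_{n_0 + n}(\alpha)) = (\ell + 1) + tn$, this yields $\ell + rn = \ell + tn$ and hence $r = t$.

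The one subtle point to verify is that $\alpha_{y_0}$ still has depth $1$ after further stretching, i.e., that $y_0$ is not itself on the stretched path and in particular $y_0 \neq x_i$. This is automatic for type (1) expressions produced by Lemma \ref{systematicexpression}: its induction begins in the base case $\alpha(x) = 0$, where the reduced expression never reflects at $x$ (and so $y_0 \neq x$), and the subsequent inductive steps only prepend reflections without altering $y_0$. After any amount of stretching and inducing elastic data at some $x_i$ on the stretched path, $y_0$ remains a simple root of the original diagram lying away from $x$, hence away from $x_i$, and the depth count goes through as stated.
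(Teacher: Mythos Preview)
Your argument is correct and is exactly the paper's approach written out in full: unfold each $s_{x_i}$ in the type (1) expression into the $(n+1)$-step chain of case (1) in Lemma \ref{stretchedcovers}, observe that the result is a reduced expression for $\st_{n_0+n}(\alpha)$, and read off the depth growth rate from its length. One small wrinkle in your last paragraph: the recursion inside Lemma \ref{systematicexpression} may, in cases (2) and (3), pass to a stretched diagram with a \emph{new} elastic vertex, so $y_0$ need not literally be a vertex of the original $G$; nevertheless your conclusion $y_0\neq x_i$ is still valid, since at the base of the recursion the root has value zero at the then-current elastic vertex, and that vertex is precisely the $x_i$ handed back up through the inductive steps.
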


Thus we can write down reduced expressions for stretched roots in a systematic way, and we will describe the fractures for stretched versions of a root in a systematic way.

To clarify which aspects of these arrangements remain stable, we take a cue from type $A$, and write the roots of $\st_n(\Phi)$ in a different basis, still indexed by the vertices of $G$. We define
\[
\beta_y := \begin{cases}
\alpha_y & y\notin \{x_0,\ldots, x_n\} \\
\sum_{j = 0}^i \alpha_{x_j} & y = x_i
\end{cases}
\]
or, inversely,
\[
\alpha_y = \begin{cases}
\beta_y & y\notin \{x_1,\ldots, x_n\} \\
\beta_{x_i} - \beta_{x_{i-1}} & y = x_i, i\geq 1
\end{cases}
\]

In doing this, we aim to recapture the interpretation of the type $A$ Coxeter groups as symmetric groups, with simple reflections corresponding to transpositions. To gauge the extent to which this works, we check how the simple reflections $s_{x_i}$ act in the $\beta$-basis. We have

\begin{equation}
\label{x0reflection}
s_{x_0}(\beta_y) = \begin{cases}
\beta_y + \beta_{x_0} & y\in L_x \\
-\beta_{x_0} & y = x_0 \\
\beta_{y} - \beta_{x_0} & y = x_i, i\geq 1 \\
\beta_y & \text{otherwise}.
\end{cases}
\end{equation}
For $1\leq j\leq n-1$, we have
\begin{equation}
\label{xireflection}
s_{x_j}(\beta_y) = \begin{cases}
\beta_{x_j} & y = x_{j-1} \\
\beta_{x_{j-1}} & y = x_j \\
\beta_y & \text{otherwise}
\end{cases}
\end{equation}
Finally, we have
\begin{equation}
\label{xnreflection}
s_{x_n}(\beta_y) = \begin{cases}
\beta_y + \beta_{x_n} - \beta_{x_{n-1}} & y\in R_x \\
\beta_{x_n} & y = x_{n-1} \\
\beta_{x_{n-1}} & y = x_n \\
\beta_y & \text{otherwise}
\end{cases}
\end{equation}
Thus, with some hiccups at the ends of the path, the $s_{x_i}$ mostly act by transpositions on our $\beta$-basis.

With this property in mind, we examine the fractures of $\st_n(\alpha)^\perp$ in the $\beta$-basis.

\begin{theorem}
\label{fractureform}
Let $\alpha$ be a positive root with a type (1) expression. Then there exist:
\begin{itemize}
\item a nonnegative integer $r$;
\item two lists of formal linear combinations $f_1,\ldots, f_s$ and $g_1,\ldots, g_t$ (where $t$ is the depth growth rate of $\alpha$) of the following terms:
\begin{align*}
&\beta_y \text{ for $y$ a vertex of $G$ other than $x$}, \\
&\beta_{x_0},\beta_{x_1},\ldots, \beta_{x_r}, \\
&\beta_{x_{n-r}}, \beta_{x_{n-r+1}},\ldots, \beta_{x_n};
\end{align*}
\end{itemize}
such that for $n\geq 2r$, the fractures associated to $\st_n(\alpha)$ are precisely the intersections of $\st_n(\alpha)^\perp$ with the arrangement
\[
\{(f_i)^\perp\mid 1\leq i\leq s\}\cup \{(g_i - \beta_{x_j})^\perp \mid 1\leq i\leq t, r+1\leq j\leq n-r-1\}.
\]
\end{theorem}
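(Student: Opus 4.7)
The plan is to apply Proposition \ref{fracturelist} to reduced expressions for $\st_n(\alpha)$ obtained by stretching a fixed type (1) expression for $\alpha$, and track the defining equations of the resulting fractures in the $\beta$-basis. Write $\alpha = s_{y_\ell}\cdots s_{y_1}(\alpha_{y_0})$ as a type (1) expression; by Proposition \ref{whattis}, exactly $t$ of the $y_k$ equal the elastic vertex $x$. For each $n$, obtain a reduced expression for $\st_n(\alpha)$ by replacing each such $s_x$ with either a forward sweep $s_{x_0}s_{x_1}\cdots s_{x_n}$ or a reverse sweep $s_{x_n}s_{x_{n-1}}\cdots s_{x_0}$, as dictated by case (1) of Lemma \ref{stretchedcovers}, and leaving every other $s_{y_k}$ alone. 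Proposition \ref{fracturelist} then identifies the fractures of $\st_n(\alpha)^\perp$ with the simple reflections in this expression.

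A direct calculation using (\ref{x0reflection})--(\ref{xnreflection}) shows that in the $\beta$-basis, a forward sweep acts by $\beta_{x_j} \mapsto \beta_{x_{j+1}} - \beta_{x_0}$ for $0 \le j < n$ and $\beta_{x_n} \mapsto -\beta_{x_0}$, and modifies each non-path $\beta_y$ by a fixed element of $\on{span}\{\beta_{x_0}, \beta_{x_n}\}$; reverse sweeps act symmetrically. A non-path reflection $s_y$ preserves each $\beta_{x_j}$ up to addition of a fixed multiple of $\beta_y$. Iterating these, applying any product of at most $t$ sweeps and at most $\ell - t$ non-path reflections to a middle $\beta_{x_j}$ produces a vector whose only nonzero $\beta_{x_k}$-coefficient with $k$ far from the endpoints sits at a single shifted index $j + \sigma$, with all other contributions supported on non-path $\beta_y$'s and on $\beta_{x_k}$'s with $k$ in a bounded range near $0$ or $n$.

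This lets me classify the fractures by the reflection that produces them. The fractures from the $\ell - t$ non-path reflections contribute a bounded number of defining forms supported on non-path $\beta_y$'s and endpoint $\beta_{x_k}$'s, all independent of $n$; these are collected into the $f_i$'s. For the sweep reflections, the central computation is that the internal part of a forward sweep applied to $\alpha_{x_j} = \beta_{x_j} - \beta_{x_{j-1}}$ (or $\beta_{x_0}$ when $j = 0$) simplifies cleanly to $\beta_{x_j}$, via the transpositions of (\ref{xireflection}) followed by the correction at $x_0$ from (\ref{x0reflection}); analogously a reverse sweep gives $\beta_{x_n} - \beta_{x_{j-1}}$. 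Applying the external reflections then produces a defining form of the shape $g_l - \beta_{x_{j + \sigma_l}}$, where $g_l$ and the integer $\sigma_l$ depend only on the position of the $l$-th sweep in the original expression, not on $n$ or $j$.

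Finally, choose $r$ large enough so that every endpoint $\beta_{x_k}$ appearing in an $f_i$ or a $g_l$ satisfies $k \le r$ or $k \ge n-r$, and so that every shift satisfies $|\sigma_l| \le r$. Then for $n \ge 2r$ the middle range $r+1 \le j \le n-r-1$ is nonempty and disjoint from both endpoint regions; the sweep fractures whose shifted index $j + \sigma_l$ lies in this range, after reindexing $j \mapsto j + \sigma_l$, become exactly the $(g_l - \beta_{x_j})^\perp$ of the theorem, while the finitely many sweep fractures with $j + \sigma_l$ near an endpoint depend on $j$ but not on $n$ and join the non-path fractures to enlarge the finite list $(f_i)^\perp$. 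The main obstacle is the combinatorial bookkeeping across many interleaved sweeps and non-path reflections: verifying that the accumulated boundary contributions stay uniformly confined to a bounded endpoint region independent of $n$, and that the reindexing between "middle $j$" and "$(g_l - \beta_{x_j})^\perp$" precisely matches the claimed arrangement.
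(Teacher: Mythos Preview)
Your proposal is correct and rests on the same ingredients as the paper's proof: Proposition \ref{fracturelist} applied to the stretched type~(1) expression, and the explicit action of sweeps and non-path reflections on the $\beta$-basis via (\ref{x0reflection})--(\ref{xnreflection}). The only difference is organizational. You compute the final form of each fracture directly, tracking how a product of at most $t$ sweeps and $\ell-t$ non-path reflections acts on a middle $\beta_{x_j}$, and then sort the results into $f_i$'s and $(g_l-\beta_{x_j})$'s at the end; this is what creates the ``combinatorial bookkeeping'' you flag as the main obstacle. The paper instead runs an induction on the length of the type~(1) expression: assuming the fractures already have the stated form for a given $r$, it checks that one further non-path reflection or one further sweep preserves that form after incrementing $r$, and adds the new batch of fractures (which are exactly your $\beta_{x_j}^\perp$ or $(\beta_{x_n}-\beta_{x_j})^\perp$). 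The inductive framing absorbs all of your accumulated-boundary-term tracking into the single statement ``unsupported at $\beta_{x_{r+1}},\ldots,\beta_{x_{n-r-1}}$ stays unsupported at $\beta_{x_{r+2}},\ldots,\beta_{x_{n-r}}$ after a sweep,'' which is why it sidesteps the bookkeeping you anticipate.
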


\begin{proof}

We proceed by induction on the length of our type (1) expression. For simple roots, the proposition is vacuously true. Bearing in mind our above discussion of how to obtain a reduced expression for $\st_n(\alpha)$, it remains to show that the proposed uniform description of the fractures is preserved when we apply $s_y$ for $y\neq x$, as well as when we apply $s_{x_0}s_{x_1}\cdots s_{x_n}$ or $s_{x_n}s_{x_{n-1}}\cdots s_{x_0}$.

In the first case, applying a reflection $s_y$ for $y$ off the stretched path does not change the coefficients of $\alpha_{x_1},\ldots, \alpha_{x_{n-1}}$, because the vertices $x_1,\ldots, x_{n-1}$ do not neighbor any vertices off the stretched path. Thus it also does not change the coefficients of $\beta_{x_1},\ldots, \beta_{x_{n-2}}$ in the $\beta$-basis. Thus, though we may have to adjust $r$, applying $s_y$ to a list of fractures with the claimed uniform description gives another list with such a description. To this we add $\alpha_y^\perp$, which is unsupported at all the $\beta_{x_i}$, and thus also fits into the uniform description.

In particular, this doesn't increment the number $t$ of collections of fractures of the form $(g_i - \beta_j)^\perp$, which is consistent with the number of reflections at $x$ in the original expression (and thus, by Proposition \ref{whattis}, the depth growth rate) staying the same.

Thus it remains to show that applying the sequence $s_{x_0}\cdots s_{x_n}$ or $s_{x_n}\cdots s_{x_0}$ also preserves the uniform description of the fractures, using equations \ref{x0reflection}--\ref{xnreflection} from above.

First, if we apply $s_{x_0}\cdots s_{x_n}$ to a fracture which is unsupported at $\beta_{x_{r+1}},\ldots, \beta_{x_{n-r-1}}$, then it is straightforward to verify that the result is unsupported at $\beta_{x_{r+2}},\ldots, \beta_{x_{n-r}}$. 

Similarly, if we apply $s_{x_0}\cdots s_{x_n}$ to a fracture of the form $(g_i - \beta_{ x_j})^\perp$, where $g_i$ is unsupported at $\beta_{x_{r+1}},\ldots, \beta_{x_{n-r-1}}$, we get one of the form $(g'_i - \beta_{x_{j+1}})^\perp$, where $g'_i$ is unsupported at $\beta_{x_{r+2}},\ldots, \beta_{x_{n-r}}$. Again, after adjusting $r$, this matches the form required of the second class of fractures in our uniform description (with a couple now falling into the first class of fractures because of the change in $r$).

Thus applying $s_{x_0}\cdots s_{x_n}$ to the existing fractures gives a collection of fractures of the claimed form. To this collection we add
\begin{align*}
\alpha_{x_0}^\perp &= \beta_{x_0}^\perp \\
s_{x_0}(\alpha_{x_{1}})^\perp &= \beta_{x_1}^\perp \\
\vdots & \\
s_{x_0}s_{x_{1}}\cdots s_{x_{n-1}}(\alpha_{x_n})^\perp &= \beta_{x_n}^\perp
\end{align*}
which certainly have the $(g_i - \beta_j)^\perp$ form.

We can show in the same way that applying $s_{x_n}s_{x_{n-1}}\cdots s_{x_0}$ to the existing fractures gives a collection of fractures of the claimed form, and to this collection we add
\begin{align*}
\alpha_{x_n}^\perp &= (\beta_{x_n} - \beta_{x_{n-1}})^\perp \\
s_{x_n}(\alpha_{x_{n-1}})^\perp &= (\beta_{x_n} - \beta_{x_{n-2}})^\perp \\
\vdots & \\
s_{x_n}s_{x_{n-1}} \cdots s_{x_{2}}(\alpha_{x_{1}})^\perp &= (\beta_{x_n} - \beta_{x_0})^\perp \\ 
s_{x_n}s_{x_{n-1}}\cdots s_{x_{1}}(\alpha_{x_0})^\perp &= \beta_{x_n}^\perp
\end{align*}
which, except for the last one (which we can account for by adjusting $r$), have the $(g_i - \beta_j)^\perp$ form.

In either case, note that this increments the number $t$ of collections of fractures of the form $(g_i - \beta_j)^\perp$, which is consistent with the number of reflections at $x$ in the original expression being incremented.
\end{proof}

Combining this result with Lemma \ref{systematicexpression}, we draw a conclusion for arbitrary roots:
\begin{corollary}
Let $\alpha$ be any positive root. Then for sufficiently large $n$, the fractures of $\st_n(\alpha)^\perp$ admit a uniform description as in Theorem \ref{fractureform}. 
\end{corollary}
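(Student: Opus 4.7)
The plan is to reduce to Theorem \ref{fractureform} via Lemma \ref{systematicexpression}. Given an arbitrary positive root $\alpha$, Lemma \ref{systematicexpression} produces an integer $n_0$, a vertex $x_i$ on the stretched path of $\st_{n_0}(G)$, and a type (1) expression for $\st_{n_0}(\alpha)$ with respect to the elastic data induced by $x_i$. The strategy is to re-base: treat $\st_{n_0}(G)$, together with its induced elastic data at $x_i$, as a new Coxeter diagram, and treat $\st_{n_0}(\alpha)$ as its base root.

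Under the canonical identification $\st_n(\st_{n_0}(G)) \cong \st_{n_0+n}(G)$---in which the new stretched path $x_{i,0}, \ldots, x_{i,n}$ replaces $x_i$ and becomes the segment $x_i, \ldots, x_{i+n}$ inside the full stretched path---we have $\st_n(\st_{n_0}(\alpha)) \cong \st_{n_0+n}(\alpha)$. Because $\st_{n_0}(\alpha)$ admits a type (1) expression in the re-based system, Theorem \ref{fractureform} applies to it directly and delivers integers $r, t$ and linear combinations $f_j, g_j$ such that the fractures of $\st_n(\st_{n_0}(\alpha))^\perp$ have the stated uniform description whenever $n \geq 2r$. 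Equivalently, the fractures of $\st_m(\alpha)^\perp$ have such a description for all $m \geq n_0 + 2r$.

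The only real obstacle is bookkeeping: translating the description from the $\beta$-basis of the re-based stretching to the $\beta$-basis associated with stretching $G$ at $x$. The two bases are related on the stretched path by telescoping identities such as $\beta^{\mathrm{new}}_{x_{i,k}} = \beta^{\mathrm{full}}_{x_{i+k}} - \beta^{\mathrm{full}}_{x_{i-1}}$, so after absorbing the constant $\beta^{\mathrm{full}}_{x_{i-1}}$ into each $g_j$, each family $(g_j - \beta^{\mathrm{new}}_{x_{i,k}})^\perp$ becomes one of the form $(g'_j - \beta^{\mathrm{full}}_{x_{i+k}})^\perp$. The asymmetric position of $x_i$ in the old stretched path shifts the resulting range of varying indices off-center relative to the symmetric range demanded by Theorem \ref{fractureform}. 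This can be rectified by enlarging the ``end-region'' parameter, e.g.\ to $r^{\mathrm{full}} := n_0 + r$, so that the finitely many fractures lying outside the symmetric middle range can be reclassified as additional $f_j^\perp$ hyperplanes---permissible because the basis vectors they involve all lie within the now-enlarged end windows. Since everything that needs adjusting is independent of $n$, no further ideas are required.
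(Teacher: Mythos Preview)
Your proposal is correct and follows essentially the same approach as the paper, which simply states that the corollary follows by combining Theorem \ref{fractureform} with Lemma \ref{systematicexpression}. You have filled in the re-basing argument and the $\beta$-basis bookkeeping that the paper leaves implicit; in particular, your observation that enlarging $r$ to roughly $n_0 + r$ absorbs the asymmetry caused by the off-center position of $x_i$ is exactly the kind of adjustment needed to put the description back into the symmetric form of Theorem \ref{fractureform}.
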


Now we use this description of the arrangements of fractures to describe their characteristic polynomials (as defined in \cite{athanasiadis}).

\begin{theorem}
\label{charpoly}
Suppose the positive root $\alpha$ has depth growth rate $t$. Let $\chi_n(q)$ be the characteristic polynomial of the fracture arrangement of $\st_n(\alpha)$. Then there exist polynomials $p_0(q),\ldots, p_t(q)$ and an integer $e$ such that
\[
\chi_n(q) = \sum_{k=1}^t p_k(q)(q-k)^{n-e}
\]
\end{theorem}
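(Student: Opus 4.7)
The approach is to apply the finite field method of Athanasiadis to the uniform description of the fracture arrangement provided by Theorem \ref{fractureform}. Since the root system and $\beta$-basis are defined over $\Z$, for every sufficiently large prime $q$, $\chi_n(q)$ equals the number of $\F_q$-points of $\st_n(\alpha)^\perp$ lying in the complement of all fracture hyperplanes $\{f_i = 0\}$ and $\{g_j - \beta_{x_k} = 0\}$. A key preliminary computation: expanding $\st_n(\alpha)$ in the $\beta$-basis, the telescoping $\alpha_{x_i} = \beta_{x_i} - \beta_{x_{i-1}}$ collapses the contribution of the stretched path to a single term, giving $\st_n(\alpha) = \sum_{y\text{ off path}} \alpha(y)\beta_y + \alpha(x)\beta_{x_n}$. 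Thus $\st_n(\alpha)^\perp$ is defined by a linear equation using only the ``outer'' $\beta$-coordinates (those indexed by vertices off the path together with $\beta_{x_0},\ldots,\beta_{x_r}$ and $\beta_{x_{n-r}},\ldots,\beta_{x_n}$); by Theorem \ref{fractureform} the forms $f_i$ and $g_j$ use only outer coordinates as well.

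Coordinatize $p \in V^*$ by $z_y := \<p, \beta_y\>$ and partition these coordinates into the outer variables and the inner variables $z_{x_{r+1}},\ldots, z_{x_{n-r-1}}$. Fix an outer configuration satisfying the equation of $\st_n(\alpha)^\perp$ and $f_i \neq 0$ for all $i$; let $k$ be the number of distinct values among $g_1(z),\ldots, g_t(z)$ on this configuration. Each inner variable $z_{x_j}$ must avoid these $k$ forbidden values and is otherwise unconstrained, giving $q-k$ independent choices and a total inner contribution of $(q-k)^{n-2r-1}$. Setting $e := 2r + 1$ and letting $N_k(q)$ count outer configurations yielding exactly $k$ distinct $g_j$-values, we obtain
\begin{equation*}
\chi_n(q) = \sum_{k=1}^t N_k(q)(q-k)^{n-e}
\end{equation*}
for all sufficiently large primes $q$.

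It remains to show that each $N_k$ agrees with a fixed polynomial $p_k(q)$ in $q$. By inclusion--exclusion over set partitions of $\{1,\ldots,t\}$ into $k$ blocks, $N_k(q)$ is a $\Z$-combination of $\F_q$-point counts of complements of hyperplane arrangements, formed from the $f_i$'s together with the equalities $g_j = g_{j'}$ (or inequalities) prescribed by the partition, all inside the hyperplane cut out by $\st_n(\alpha)^\perp$ in the outer-variable space. Another application of Athanasiadis's method shows each such count is a polynomial in $q$ for large primes, so $N_k(q) = p_k(q)$ on infinitely many primes and the identity then holds as polynomials. The main obstacle is verifying that this outer counting problem is genuinely $n$-independent once $n \geq 2r$: one must check that, after relabeling the coordinates $\beta_{x_{n-r+j}}$ by fixed symbols, the set of outer variables, the defining equation of $\st_n(\alpha)^\perp$, and the forms $f_i$ and $g_j$ are literally the same for all such $n$. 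This is essentially built into Theorem \ref{fractureform}, whose stable list of $f_i$ and $g_j$ does not grow with $n$, but the indexing conventions must be tracked carefully.
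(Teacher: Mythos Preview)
Your proposal is correct and follows essentially the same route as the paper: apply Athanasiadis's finite-field method to the uniform form from Theorem~\ref{fractureform}, separate the ``outer'' coordinates (on which the $f_i$ and $g_j$ depend) from the ``inner'' coordinates $\beta_{x_{r+1}},\ldots,\beta_{x_{n-r-1}}$, stratify outer configurations by the number $k$ of distinct $g_j$-values, and observe that each stratum count is polynomial by a second appeal to Athanasiadis while the inner coordinates contribute the factor $(q-k)^{n-e}$. Your telescoping computation of $\st_n(\alpha)$ in the $\beta$-basis, showing that the defining equation of $\st_n(\alpha)^\perp$ involves only outer coordinates, is a detail the paper leaves implicit but which is needed to justify working inside $\st_n(\alpha)^\perp$ without disturbing the inner/outer split; including it is an improvement.
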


\begin{proof}

We use the following result of Athanasiadis:
\begin{lemma}[{\cite[Theorem 2.2]{athanasiadis}}]
\label{athanasiadisthm}
Let $\AA$ be any subspace arrangement defined over the integers and let $\chi(q)$ be its characteristic polynomial. Then for $q$ a sufficiently large prime, $\chi(q)$ is the number of points in the complement of $\AA$ over $\F_q$.
\end{lemma}

Thus, to show $\chi_n(q)$ has the claimed form, it will suffice to show that the point counts over $\F_q$ eventually do.

Choosing a point in the complement of the hyperplanes in Theorem \ref{fractureform} amounts to: 
\begin{itemize}
\item choosing all the coordinates except $\beta_{x_{r+1}},\ldots, \beta_{x_{n-r-1}}$ such that the $f_i$ are nonzero;
\item plugging these coordinates into the $g_i$ and choosing $\beta_{x_{r+1}},\ldots, \beta_{x_{n-r-1}}$ independently, subject to the condition that they are different from $g_i$, which excludes at most $t$ values.
\end{itemize}

To count these points, consider the subarrangement formed by the $f_i$. We stratify its complement according to the number of distinct values assumed by the $g_j$. Each stratum is built up from the hyperplanes $(f_i)^\perp$ and $(g_{j_1} - g_{j_2})^\perp$ through complementation, union, and intersection, and we so we can repeatedly use Lemma \ref{athanasiadisthm} to conclude that the number of points in the $k$th stratum over $\F_q$ is given by a polynomial $p_k(q)$ for large primes $q$. Combining this with the choice of the remaining variables gives the claimed form for $\chi_n(q)$. 

\end{proof}


\begin{corollary}
\label{exponentialgrowth}
Let $d$ be the number of vertices of $G$. Then for sufficiently large $n$, the number of shards of $\st_n(\alpha)^\perp$ is
\[
(-1)^{d - e - 1}\sum_{k=1}^t p_k(-1)(k+1)^{n-e}
\]
In particular, it is $O((t+1)^n)$.
\end{corollary}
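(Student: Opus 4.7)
The plan is to combine Theorem \ref{charpoly} with Zaslavsky's theorem, which states that the number of regions of a real hyperplane arrangement in an $m$-dimensional ambient space equals $(-1)^m \chi(-1)$, where $\chi$ is the characteristic polynomial. Since the fracture arrangement of interest lives inside the hyperplane $\st_n(\alpha)^\perp$, the first step is to determine its dimension: the diagram $\st_n(G)$ has $d + n$ vertices (the elastic vertex $x$ is replaced by $n+1$ vertices on the stretched path), so $V$ has dimension $d+n$, and therefore $\st_n(\alpha)^\perp \subset V^*$ has dimension $d + n - 1$. Thus the number of shards is
\[
(-1)^{d + n - 1} \chi_n(-1).
\]

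Next, I would substitute $q = -1$ into the expression furnished by Theorem \ref{charpoly}. This gives
\[
\chi_n(-1) = \sum_{k=1}^t p_k(-1)(-1 - k)^{n-e} = (-1)^{n-e} \sum_{k=1}^t p_k(-1)(k+1)^{n-e}.
\]
Multiplying by $(-1)^{d+n-1}$ collapses the sign to $(-1)^{d + 2n - 1 - e} = (-1)^{d - e - 1}$, yielding precisely the claimed closed form.

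Finally, for the asymptotic $O((t+1)^n)$ bound, I would observe that the coefficients $p_k(-1)$ are constants independent of $n$, and that each summand is $p_k(-1)(k+1)^{n-e}$, which grows at rate $(k+1)^n$. The dominant summand is the one with $k = t$, contributing growth of order $(t+1)^n$, and the finite sum over $k = 1, \ldots, t$ does not affect the asymptotic order.

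There is essentially no hard step here: once Theorem \ref{charpoly} is in hand, the corollary is a matter of invoking Zaslavsky, keeping careful track of the ambient dimension (which depends on $n$), and doing the sign bookkeeping. The only point that requires care is recognizing that $\dim V$ itself changes with $n$, so one must not mistakenly treat the ambient dimension as a constant when comparing with the constant $e$ in the exponent of $q - k$.
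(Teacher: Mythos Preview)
Your proposal is correct and follows essentially the same approach as the paper: invoke Zaslavsky's theorem (the paper cites it via \cite[Theorem 1.1]{athanasiadis}) for the arrangement in the $(d+n-1)$-dimensional space $\st_n(\alpha)^\perp$, substitute $q=-1$ into the formula from Theorem~\ref{charpoly}, and simplify the sign. The paper's proof is a single sentence that leaves the dimension count and sign computation implicit; you have simply spelled these out.
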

\begin{proof}
This follows from the fact that the number of regions of a hyperplane arrangement in $\R^{d + n - 1}$ with characteristic polynomial $\chi(q)$ is $(-1)^{d+n-1}\chi(-1)$ \cite[Theorem 1.1]{athanasiadis}.
\end{proof}

\end{document}